\tikzset{>=latex}
\theoremstyle{plain}
\newtheorem{thm}{Theorem}[section]
\newtheorem{cor}[thm]{Corollary}
\newtheorem{prop}[thm]{Proposition}
\newtheorem{lemma}[thm]{Lemma}
\newtheorem{claim}[thm]{Claim}
\newtheorem{assumption}[thm]{Assumption}
\theoremstyle{definition}
\newtheorem{defn}[thm]{Definition}
\newtheorem{rem}[thm]{Remark}
\newtheorem{eg}[thm]{Example}
\newtheorem{fact}[thm]{Fact}
\newtheorem{observe}[thm]{Observation}
\numberwithin{equation}{section}
\newcommand{\rpm}{\sbox0{$1$}\sbox2{$\scriptstyle\pm$}
  \raise\dimexpr(\ht0-\ht2)/2\relax\box2 }
\newcommand{\R}{\mathbb{R}}
\newcommand{\argmax}{\operatornamewithlimits{\textrm{argmax}}}
\tikzstyle{nd} = [anchor=base, inner sep=0pt]
\tikzstyle{ndpic} = [remember picture, baseline, every node/.style={nd}]
\newcommand{\Real}{\mathbb{R}}
\pgfpoint{\pgfdecorationsegmentlength}{0pt}}
\def\beq{\begin{equation}}
\def\eeq{\end{equation}}
\def\ba{\begin{enumerate}[(a)]}
\def\bei{\begin{enumerate}[(i)]}
\def\be{\begin{enumerate}[(1)]}
\def\ee{\end{enumerate}}
\def\bi{\begin{itemize}}
\def\ei{\end{itemize}}
\def\beg{\begin{eg}}
\def\eeg{\end{eg}}
\def\bd{\begin{defn}}
\def\ed{\end{defn}}
\def\bt{\begin{thm}}
\def\et{\end{thm}}
\def\bl{\begin{lemma}}
\def\el{\end{lemma}}
\def\bfac{\begin{fact}}
\def\efac{\end{fact}}
\def\bc{\begin{cor}}
\def\ec{\end{cor}}
\def\bp{\begin{prop}}
\def\ep{\end{prop}}
\def\bo{\begin{observe}}
\def\eo{\end{observe}}
\def\bas{\begin{assumption}}
\def\eas{\end{assumption}}
\def\RR{\mathbb{R}}
\def\ZZ{\mathbb{Z}}
\def\NN{\mathbb{N}}
\def\ii{\item}
\def\beg{\begin{eg}}
\def\eeg{\end{eg}}
\numberwithin{equation}{section}
\numberwithin{table}{section}
\begin{document}

\begin{frontmatter}

\title{KPZ equation tails for general initial data}
\runtitle{KPZ equation tails}
\runauthor{Corwin \& Ghosal}

\begin{aug}
  \author{\fnms{Ivan}  \snm{Corwin}\textsuperscript{1}\corref{}\ead[label=e1]{ivan.corwin@gmail.com}}
  \author{\fnms{Promit} \snm{Ghosal}\textsuperscript{2}\corref{}\ead[label=e2]{pg2475@columbia.edu}\vspace{0.1in}\\{Columbia University}}

\address{\textsuperscript{1}Department of Mathematics, 2990 Broadway, New York, NY 10027, USA\\ \printead{e1}}
\address{\textsuperscript{2}Department of Statistics, 1255 Amsterdam, New York, NY 10027, USA\\ \printead{e2}}




\end{aug}

\begin{abstract}
We consider the upper and lower tail probabilities for the centered (by time$/24$) and scaled (according to KPZ time$^{1/3}$ scaling) one-point distribution of the Cole-Hopf solution of the KPZ equation when started with initial data drawn from a very general class. For the lower tail, we prove an upper bound which demonstrates a crossover from super-exponential decay with exponent $3$ in the shallow tail to an exponent $5/2$ in the deep tail. For the upper tail, we prove  super-exponential decay bounds with exponent $3/2$ at all depths in the tail.
\end{abstract}



\begin{keyword}
\kwd{KPZ equation}
\kwd{KPZ line ensemble}
\kwd{Brownian Gibbs property}
\end{keyword}

\end{frontmatter}









\section{Introduction}\label{introduction}

In this paper we consider the following question: How does the initial data for an SPDE affect the statistics of the solution at a later time? Namely, we consider the Kardar-Parisi-Zhang (KPZ) equation (or equivalently, the stochastic heat equation (SHE)) and probe the lower and upper tails of the centered (by time$/24$) and scaled (by time$^{1/3}$) one-point distribution for the solution at finite and long times.
Our main results (Theorems~\ref{Main1Theorem} and \ref{Main4Theorem}) show that within a very large class of initial data, the tail behavior for the KPZ equation does not change in terms of the super-exponential decay rates and at most changes in terms of the coefficient in the exponential. These results are the first tail bounds for general initial data which capture the correct decay exponents and which respect the long-time scaling behavior of the solution.

In order to state our results, let us recall the KPZ equation, which is formally written as
\begin{align}
\partial_T \mathcal{H}(T,X) &= \frac{1}{2}\partial^2_X \mathcal{H}(T, X) + \frac{1}{2}(\partial_X \mathcal{H}(T,X))^2+ \xi(T, X), \qquad
\mathcal{H}(0,X) = \mathcal{H}_0(X).
\end{align}
 Here, $\xi$ is the space-time white noise, whose presence (along with the non-linearity) renders this equation ill-posed.
A proper definition of the solution of the KPZ equation comes from the Cole-Hopf transform by which we {\em define}
\begin{align}\label{eq:ColeHpf}
\mathcal{H}(T,X) := \log \mathcal{Z}(T,X)
\end{align}
where $\mathcal{Z}(T,X)$ is the unique solution of the well-posed SHE
\begin{align}\label{eq:SHEDef}
\partial_T \mathcal{Z}(T,X) = \frac{1}{2} \partial^2_X \mathcal{Z}(T,X) + \mathcal{Z}(T,X) \xi(T,X), \quad \mathcal{Z}_0(X) = e^{\mathcal{H}_0(X)}.
\end{align}
Note that the logarithm in \eqref{eq:ColeHpf} is defined since $\mathcal{Z}(T,X)$ is almost-surely strictly positive for all $T>0$ and $X\in \R$ \cite{Mueller91}.  We refer to \cite{Quastel12, Corwin12, Hairer13} for more details about the KPZ equation and the SHE and their relation to random growth, interacting particle systems, directed polymers and other probabilistic systems (see also \cite{Mol96,Davar,Bertini1995,BarraquandCorwin15,Comets16}).

In this paper, we consider very general initial data as now describe.

\bd\label{Hypothesis}
Fix $\nu \in (0,1)$ and $C,\theta, \kappa, M>0$. A measurable function $f:\RR\to \RR\cup \{-\infty\}$ satisfies $\mathbf{Hyp}(C,\nu,\theta, \kappa, M)$ if:
   \be
\ii \mbox{}
\vskip-1.3cm
\begin{align}\label{eq:InMomBd}
 f(y) \leq C + \frac{\nu}{2^{2/3}} y^2, \quad \forall y\in \RR,
 \end{align}
 \ii  there exists a subinterval $\mathcal{I}\subset [-M,M]$ with $|\mathcal{I}|=\theta$ such that
 \begin{align}\label{eq:LowInitBd}
 f(y)\geq -\kappa, \quad  \forall y\in \mathcal{I}.
 \end{align}
\ee
\ed

For a measurable function $f:\RR\to \RR\cup \{-\infty\}$, and $T>0$ consider the solution to the KPZ equation with initial data $\mathcal{H}_0$ chosen such that
\begin{align}\label{eq:ScaledInitialData}
T^{-\frac{1}{3}}\mathcal{H}_0\big((2T)^{\frac{2}{3}}y\big) = f(y).
\end{align}
We consider the KPZ equation with this initial data and run until time\footnote{Notice that the initial data and time horizon are both dependent on $T$. This allows for a much wider class of initial data which are adapted to the KPZ fixed point scaling.} $T$. Namely, let
  \begin{align}\label{eq:ScalCentHeight}
h^f_T(y):=\frac{\mathcal{H}\big(2T, (2T)^{\frac{2}{3}}y\big)+\frac{T}{12} -\frac{2}{3}\log(2T)}{T^{\frac{1}{3}}}.
\end{align}

\begin{figure}[h]
\includegraphics[width=.45\linewidth]{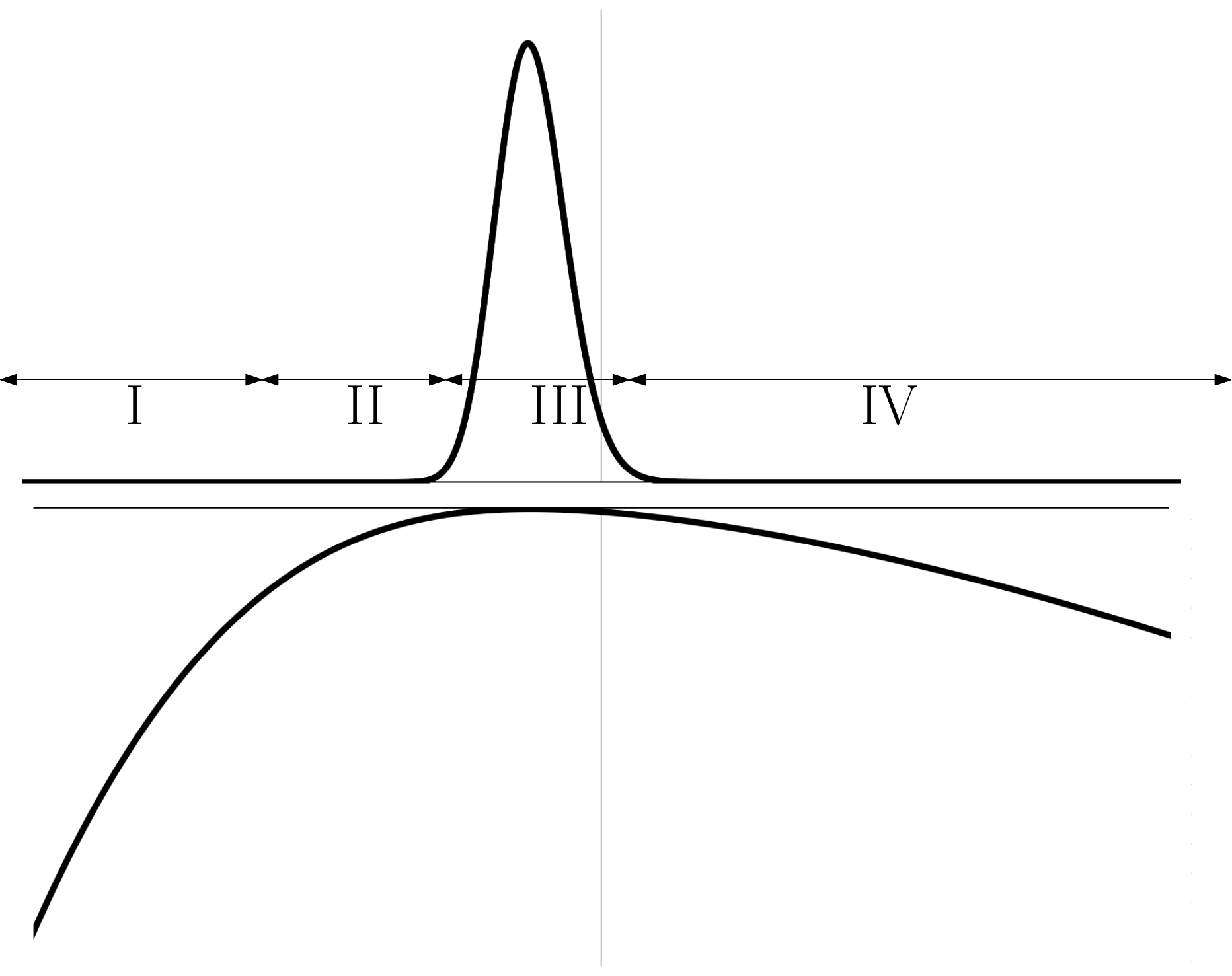}
\caption{Schematic plot of the density (top) and log density (bottom) of $h^f_T(0)$. Letting $s$ denote the horizontal axis variable, there are four regions which display different behaviors. Region $I$ (deep lower tail, when $s\ll -T^{2/3}$): the log density has power law decay with exponent $5/2$. Region  $II$ (shallow lower tail, when $-T^{2/3} \ll s\ll 0$): the log density has power law decay with exponent $3$. Region $III$ (center, when $s\approx 0$): the density depends on initial data as predicted by the KPZ fixed point. Region $IV$ (upper tail, when $s\gg 0$): the log density has power law decay with exponent $3/2$. The universality of the power law exponents (in regions $I$, $II$ and $IV$) for general initial data constitutes the main contribution of this paper.}
\label{fig:Figure1}
\end{figure}

Our first main result (Theorem~\ref{Main1Theorem}) provides an upper bound on the lower tail that holds uniformly over $f\in\mathbf{Hyp}(C,\nu, \theta, \kappa, M)$, and $T>1$. The proof of this and our other main results are deferred to the later sections of the paper.

\bt\label{Main1Theorem}
Fix any  $\epsilon, \delta\in (0,\frac{1}{3})$, $C, M,\theta>0$, $\nu\in (0,1)$, and $T_0>0$. There exist $s_0=s_0(\epsilon, \delta, C,M, \theta,\nu,T_0)$ and $K=K(\epsilon, \delta, T_0)>0$ such that for all $s\geq s_0$, $T\geq T_0$, and $f\in \mathbf{Hyp}(C,\nu, \theta, \kappa, M)$ (recall $h^f_T(y)$ is defined in \eqref{eq:ScaledInitialData} and \eqref{eq:ScalCentHeight}),
\begin{align}\label{eq:UpperBoundDetData}
\mathbb{P}\left(h^f_T(0)\leq -s\right)&\leq e^{- T^{1/3}\frac{4(1-\epsilon) s^{5/2}}{15\pi}}
+ e^{- Ks^{3-\delta} - \epsilon s T^{1/3}}+  e^{- \frac{(1-\epsilon)s^3}{12}}.
\end{align}
%
\et

\begin{rem}
There are three regions of the lower tail (see $I, II$, and $III$ in Figure \ref{fig:Figure1}). In each region (and for $T$ large) a different one of the three terms on the r.h.s. of \eqref{eq:UpperBoundDetData} becomes active. For instance, for region $I$ when $s\gg T^{2/3}$, the largest term in our bound is the first  term in the r.h.s. of \eqref{eq:UpperBoundDetData}.
Likewise, the middle term in the r.h.s. of \eqref{eq:UpperBoundDetData} is active in region $II$ and the last term in region $III$. We presently lack a matching lower bound for the lower tail probability. This is known for only the narrow wedge (see Proposition~\ref{NotMainTheorem}). See Section \ref{sec:previouswork} for some discussion regarding physics literature related to this tail.
Let us also note that one can get similar bound as in \eqref{eq:UpperBoundDetData} on $\mathbb{P}\big(h^{f}_T(y)\leq -s\big)$ when $y\neq 0$. This is explained in Section~\ref{sketch}. Finally, observe that two important choices of initial data --- narrow wedge and Brownian motion --- do not fit into this class\footnote{The flat initial data is in the class and arises from $f\equiv 0$.}. The narrow wedge result is in fact a building block for the proof of this result, while Brownian follows as a fairly easy corollary (see Section~\ref{NWandBM}).
\end{rem}

%
%
%


Our second main result pertains to the upper tail and shows upper and lower bounds which hold uniformly over  $f\in\mathbf{Hyp}(C,\nu,\theta,\kappa,M)$, and $T>\pi$.

 \bt\label{Main4Theorem}
Fix any $\nu \in (0,1)$ and $C,\theta, \kappa, M>0$.
For any $T_0>0$, there exist $s_0= s_0(C, \nu, \theta, \kappa, M, T_0)>0$, $c_1= c_1(T_0)>c_2=c_2(T_0)>0$ such that for all $s\geq s_0$, $T>T_0$ and  $f\in\mathbf{Hyp}(C,\nu, \theta, \kappa, M)$,
\begin{align}\label{eq:GenBdRough}
e^{-c_1s^{3/2}}\leq \mathbb{P}\big(h^{f}_{T}(0)\geq s\big)\leq e^{-c_2s^{3/2}}.
\end{align}

We may further specify values of $c_1$ and $c_2$ for which \eqref{eq:GenBdRough} holds, provided we assume $T_0>\pi$. In that case, for any $ \epsilon, \mu \in (0, \frac{1}{2})$, there exists $s_0=s_0(\epsilon, \mu, C, \nu, \theta, \kappa, M, T_0)>0$ such that for all $s\geq s^{\prime}_0$, $T\geq T_0$, and $f\in\mathbf{Hyp}(C,\nu, \theta, \kappa, M)$,
\eqref{eq:GenBdRough} holds with the following choices for $c_1>c_2$:
\bei
\ii If $s_0\leq s< \frac{1}{8}\epsilon^3(1-\frac{2\mu}{3})^{-1} T^{\frac{2}{3}}$ then we may take $c_1=\frac{8}{3}(1+\mu)(1+\epsilon)$ and $c_2= \frac{\sqrt{2}}{3}(1-\mu)(1-\epsilon)$.
\ii If $s\geq \max\{s_0, \frac{9}{16}\epsilon^{-2} (1-\frac{2\mu}{3})^{-1} T^{\frac{2}{3}} \}$ then we may take $c_1= 8\sqrt{3}(1+\mu)(1+\epsilon)$ and $c_2=\frac{\sqrt{2}}{3}(1-\mu)(1-\epsilon)$.
\ii If $\max\{s_0, \frac{1}{8}\epsilon^3(1-\frac{2\mu}{3})^{-1}  T^{\frac{2}{3}} \}\leq s\leq \max\{s_0, \frac{9}{16}\epsilon^{-2} (1-\frac{2\mu}{3})^{-1} T^{\frac{2}{3}} \}$ then we may take $c_1= 2^{9/2}\epsilon^{-3}(1+\mu)$ and $c_2= \frac{\sqrt{2}}{3}(1-\mu)\epsilon$.
\ee
 \et

%
%

\begin{rem}
 In Theorems~\ref{GrandUpTheorem} and \ref{Main6Theorem}, we prove similar results for narrow wedge and Brownian initial data. The upper and lower bounds on the constants $c_1$ and $c_2$ are not optimal. In fact, it is not clear to us how the initial data translates to the optimal value of $c_1$ or $c_2$. There, however, some predictions in the physics literature -- see Section \ref{sec:previouswork}. The condition $T_0>\pi$ assumed in the second part of Theorem \ref{Main4Theorem} could be replaced by an arbitrary lower bound, though the resulting conditions on $s$, $c_1$ and $c_2$  would need to change accordingly. This value $\pi$ turns out to work well in the computations leading to this result; in particular see \eqref{eq:IntegralBd1}.
\end{rem}

\subsection{Proof sketch}\label{sketch}
The fundamental solution to the SHE $\mathcal{Z}^{\mathbf{nw}}(T,X)$ corresponds to delta initial data $\mathcal{Z}_0(X)=\delta_{X=0}$. For any positive $T$, this results in a strictly positive solution, hence the corresponding KPZ equation solution is well-defined for $T>0$ and this initial data is termed \emph{narrow wedge} since in short time $\mathcal{Z}(T,X)$ is well-approximated by the Gaussian heat-kernel whose logarithm is a very thin parabola $\frac{X^2}{2T}$.

\bd[Cole-Hopf Transform]\label{bd:ColeHopf}
The Cole-Hopf transform of $\mathcal{Z}^{\mathbf{nw}}(T,X)$ is denoted here by $\mathcal{H}^{\mathbf{nw}}(T,X):= \log \mathcal{Z}^{\mathbf{nw}}(T,X)$. We further define a scaled and centered version of this as
\begin{align}\label{eq:DefUpsilon}
\Upsilon_T(y):= \frac{\mathcal{H}^{\mathbf{nw}}(2T, (2T)^{\frac{2}{3}} y)+ \frac{T}{12}}{T^{\frac{1}{3}}}.
\end{align}
\ed

The proof of our main results relies upon a combination of three ingredients: (1) lower tail bounds for the narrow wedge initial data recently proved in \cite{CG18}, (2) Gibbsian line ensemble techniques applied to the KPZ line ensemble \cite{CorHam16}, and (3) explicit integral formulas for moments of the SHE with delta initial data. Now, we give an overview of our proofs. A more involved discussion of the KPZ line ensemble is contained in Section \ref{Tools}.

To prove Theorem~\ref{Main1Theorem}, one of our main tools is the upper and lower bound for the lower tail of the one point distribution of the narrow wedge solution of the KPZ equation given in Proposition~\ref{NotMainTheorem}. However, to use this result, we need a connection between the solution of the KPZ equation under general initial conditions and the narrow wedge solution. This connection is made through the following identity (which follows from the Feynman-Kac formula) which represents the one point distribution of the KPZ equation started from $\mathcal{H}_0$ as a convolution between the spatial process $\Upsilon_T(\cdot)$ and the initial data $\mathcal{H}_0(\cdot)$.
\bp[Lemma~1.18 of \cite{CorHam16}]\label{Distribution}
For general initial data $\mathcal{H}_{0}(\cdot):=\mathcal{H}(0,\cdot)$ and for a fixed pair $T>0$ and $X\in \Real$, the Cole-Hopf solution $\mathcal{H}(T,X)$ of the KPZ equation satisfies
\begin{align}\label{eq:DistUnderGI}
\mathcal{H}(2T,X) \stackrel{d}{=} \log\left(\int^{\infty}_{-\infty} e^{\mathcal{H}^{\mathbf{nw}}(2T,Y)+ \mathcal{H}_0(X-Y)} dY\right)\stackrel{d}{=} -\frac{T}{12}+ \log \Big( \int_{-\infty}^{\infty} e^{T^{\frac{1}{3}} \Upsilon_T((2T)^{-\frac{1}{3}}Y) +\mathcal{H}_0(X-Y) } dY\Big).
\end{align}
\vskip-.2cm
\noindent Furthermore, for $\mathcal{H}_0$ as in \eqref{eq:ScaledInitialData}, we have
\begin{align}\label{eq:DistUnderGI2}
\frac{\mathcal{H}(2T, (2T)^{\frac{1}{3}} X) + \frac{T}{12}- \frac{2}{3}\log (2T)}{T^{\frac{1}{3}}} \stackrel{d}{=} \frac{1}{T^{\frac{1}{3}}}\log \Big( \int_{-\infty}^{\infty} e^{T^{\frac{1}{3}} \big(\Upsilon_T(Y) +f(X-Y)\big)} dY\Big).
\end{align}
\ep

To employ this identity, we need tail bounds for the entire spatial process  $\Upsilon_T(\cdot)$. Presently, exact formulas amenable to rigorous asymptotics are only available for one-point tail probabilities, and not multi-point. However, by using the Gibbs property for the KPZ line ensemble (introduced in \cite{CorHam16} and recalled here in Section~\ref{Tools}) we will be able to extend this one-point tail control to the entire spatial process. Working with the Gibbs property is a central technical aspect of our present work and forms the backbone of the proof of Theorem~\ref{Main1Theorem}.

Besides the KPZ line ensemble, another helpful property of the narrow wedge KPZ solution is the stationarity of the spatial process $\Upsilon_T(\cdot)$ after a parabolic shift.

 \bp[Proposition~1.4 of \cite{Amir11}]\label{StationarityProp}
The one point distribution of $\Upsilon_T(y)+\frac{y^2}{2^{2/3}}$ does not depend on the value of $y$.
\ep
%

The proof of Theorem~\ref{Main4Theorem} shares a similar philosophy with that of Theorem~\ref{Main1Theorem}. We first prove an upper (as Theorem~\ref{GrandUpTheorem}) and a lower bound for the upper tail probability of $\Upsilon_T(0)$. The proof of Theorem~\ref{GrandUpTheorem} employs a combination of the one-point Laplace transform formula (see Proposition~\ref{ppn:PropConnection}) and moment formulas (see the proof of Lemma \ref{MomBoundLem}) for $\mathcal{Z}^{\mathbf{nw}}$.

The rest of the proof of Theorem~\ref{Main4Theorem} is based on the Gibbs property of the KPZ line ensemble and the FKG inequality of the KPZ equation. The FKG inequality of the KPZ equation is, for example (as shown in \cite[Proposition~1]{CQ11})  a consequence of the positive associativity of its discrete analogue, the asymmetric simple exclusion process (ASEP).
\bp[Proposition~1 of \cite{CQ11}]\label{FKGProp} Let $\mathcal{H}$ be the Cole-Hopf solution to KPZ started from initial data $\mathcal{H}_0$. Fix $k\in \ZZ_{>0}$. For any $T_1,\ldots ,T_k\geq 0$, $X_1,\ldots , X_k\in \RR$ and $s_1, \ldots ,s_k\in \RR$,
 \begin{align}\label{eq:FKGStatement}
 \mathbb{P}\Big(\bigcap_{\ell=1}^{k}\big\{\mathcal{H}(T_\ell,X_\ell)\leq s_\ell\big\}\Big)\geq \prod_{\ell =1}^{k}\mathbb{P}\Big(\mathcal{H}(T_\ell,X_\ell)\leq s_{\ell}\Big).
 \end{align}
 \ep

A simply corollary of this result is that for $T_1, T_2\in \RR_{>0}$, $X_1, X_2 \in \RR$ and $s_1, s_2\in \RR$,
\begin{align}\label{eq:RevFKG}
\mathbb{P}\Big(\mathcal{H}(T_1, X_1)>s_1, \mathcal{H}(T_2, X_2)>s_2\Big)\geq \mathbb{P}\Big(\mathcal{H}(T_1, X_1)>s_1\Big)\mathbb{P}\Big(\mathcal{H}(T_2, X_2)>s_2\Big).
\end{align}

\subsection{Narrow wedge and Brownian initial data results}\label{NWandBM}
Neither narrow wedge nor two-sided Brownian initial data belongs to the class of functions in Definition~\ref{Hypothesis}. We record  here the analogues of Theorems~\ref{Main1Theorem} and \ref{Main4Theorem} for these two cases.
As mentioned in the last section, the one point tail results for the narrow wedge solution are important inputs to the proof of Theorems~\ref{Main1Theorem} and \ref{Main4Theorem}. We recall these below.
\bp[Theorem~1.1 of \cite{CG18}]\label{NotMainTheorem}
Fix $\epsilon, \delta \in (0,\frac{1}{3})$ and $T_0>0$. Then, there exist $s_0= s_0(\epsilon, \delta, T_0)$, $K_1 = K_1(\epsilon, \delta, T_0)>0$, $K_2= K_2(T_0)>0$ such that for all $s\geq s_0$ and $T\geq T_0$,
\begin{align}\label{eq:PrevRes1}
\mathbb{P}(\Upsilon_T(0)\leq -s)&\leq e^{-T^{1/3}\frac{4s^{5/2}(1-\epsilon)}{15\pi} } + e^{-K_1s^{3-\delta}- \epsilon sT^{1/3}} + e^{-\frac{(1-\epsilon)s^3}{12}}\\
\textrm{and,}\qquad \mathbb{P}(\Upsilon_T(0)\leq -s)&\geq e^{-T^{1/3}\frac{4s^{5/2}(1+\epsilon)}{15\pi} } + e^{-K_2s^3}.
\end{align}
\ep

Our general initial data results also rely upon upper and lower bounds on the upper tail probability of $\Upsilon_T(\cdot)$ which are, in fact, new (see Section \ref{sec:previouswork} for a discussion of previous work).


 \bt\label{GrandUpTheorem}
For any $T_0>0$, there exist $s_0= s_0(T_0)>0$ and $c_1=c_1(T_0)> c_2=c_2( T_0)>0$ such that for all $s\geq s_0$ and $T>T_0$
\begin{align}\label{eq:RoughBd1}
e^{-c_1s^{3/2}}\leq \mathbb{P}(\Upsilon_T(0)\geq s)\leq e^{-c_2s^{3/2}}.
\end{align}

We may further specify values of $c_1$ and $c_2$ for which \eqref{eq:RoughBd1} holds, provided we assume $T_0>\pi$. In that case, for any $\epsilon\in (0,\frac{1}{2})$, there exists $s_0= s_0(\epsilon, T_0)>0$ such that for all $s\geq s_0$ and $T\geq T_0$, \eqref{eq:RoughBd1} holds with the following choices for $c_1>c_2$:
\bei
\ii If $s_0\leq s< \frac{1}{8}\epsilon^{2} T^{\frac{2}{3}}$ then we may take $c_1= \frac{4}{3}(1+\epsilon)$ and $c_2= \frac{4}{3}(1-\epsilon)$.
\ii If $s\geq \max\{s_0, \frac{9}{16}\epsilon^{-2} T^{\frac{2}{3}} \}$ then we may take $c_1= 4\sqrt{3}(1+\epsilon)$ and $c_2= \frac{4}{3}(1-\epsilon)$. Furthermore, for $c_1=\frac{4}{3}(1+\epsilon)$  there exists a sequence $\{s_n\}_{n\geq 1}$ with $s_n\to \infty$ as $n\to \infty$ such that $\mathbb{P}(\Upsilon_T>s_n)> e^{-c_1 s_n^{3/2}}$ for all $n$.
\ii If $\max\{s_0, \frac{1}{8}\epsilon^2 T^{\frac{2}{3}} \} \leq s \leq \max\{s_0, \frac{9}{16}\epsilon^{-2} T^{\frac{2}{3}} \}$ then we may take  $c_1= 2^{7/2}\epsilon^{-3}$ and $c_2=\frac{4}{3}\epsilon$.
\ee
\et

%

\begin{rem}
Part $(i)$ of Theorem~\ref{GrandUpTheorem} shows that $\mathbb{P}(\Upsilon_T(0)>s)$ is close to $\exp(-4s^{\frac{3}{2}}/3)$  when $s\ll T^{\frac{2}{3}}$. This is in agreement with the fact that the tail probabilities of $\Upsilon_T(0)$ should be close to the tails of the Tracy-Widom GUE distribution as $T$ increases to $\infty$. Part $(ii)$ of Theorem~\ref{GrandUpTheorem} shows that the upper bound to $\mathbb{P}(\Upsilon_T(0)>s)$ is close $\exp(-4s^{\frac{3}{2}}/3)$ when $s\gg T^{\frac{2}{3}}$. We also have some lower bound which is not tight. However, part $(ii)$ further tells that the lower bound for $\mathbb{P}(\Upsilon_T(0)>s)$ cannot differ much from $\exp(-4s^{\frac{3}{2}}/3)$ for all large $s$. In the regime $s=O(T^{\frac{2}{3}})$, we do not have tight upper and lower bounds in \eqref{eq:RoughBd1}, although, the decay exponent of $\mathbb{P}(\Upsilon_T(0)>s)$ will still be equal to $3/2$.
\end{rem}

Our next two results are about the tail probabilities for the KPZ equation with two sided Brownian motion initial data; as this initial data falls outside our class, some additional arguments are necessary. Define $\mathcal{H}^{\mathrm{Br}}_0:\RR\to \RR$ as $\mathcal{H}^{\mathrm{Br}}_0(x):= B(x)$ where $B$ is a two sided standard Brownian motion with $B(0)=0$. Denote the Cole-Hopf solution of the KPZ equation started from this initial data $\mathcal{H}^{\mathrm{Br}}_0$ by $\mathcal{H}^{\mathrm{Br}}(\cdot,\cdot)$ and define
\begin{align}\label{eq:h_BrDefine}
h^{\mathrm{Br}}_{T}(y):= \frac{\mathcal{H}^{\mathrm{Br}}(2T,(2T)^{\frac{2}{3}} y)+ \frac{T}{12} - \frac{2}{3}\log (2T)}{T^{\frac{1}{3}}} \quad \forall T >0.
\end{align}
We first state our result on the lower tail of $h^{\mathrm{Br}}_{T}(0)$.
\bt\label{Main3Theorem}
 Fix $\epsilon, \delta \in (0, \frac{1}{3})$ and $T_0>0$.
There exist $s_0=s_0(\epsilon, \delta, T_0)$ and $K=K(\epsilon, \delta, T_0)>0$ such that for all $s\geq s_0$ and $T\geq T_0$,
\begin{align}\label{eq:UpBoundBrData}
\mathbb{P}\big(h^{\mathrm{Br}}_{T}(0)\leq -s\big)\leq e^{- T^{1/3}\frac{4(1-\epsilon) s^{5/2}}{15\pi}} + e^{- Ks^{3-\delta}- \epsilon s T^{1/3}} + e^{-\frac{(1-\epsilon)s^3}{12}}.
\end{align}
\et

Our last result of this section is about the upper tail probability of $h^{\mathrm{Br}}_T(0)$.
  \bt\label{Main6Theorem}
Fix $\epsilon,\mu\in (0,\frac{1}{2})$ and $T_0>0$. Then, there exists $s_0 = s_0(\epsilon, \mu, T_0)$ such that for all $s\geq s_0$ and $T\geq T_0$,
\begin{align}\label{eq:UpTailBrData1}
e^{-c_1s^{3/2}} \leq \mathbb{P}\big(h^{\mathrm{Br}}_T(0)>s\big)\leq e^{-c_2s^{3/2}}+ e^{-\frac{1}{9\sqrt{3}}(\mu s)^{3/2}}
\end{align}
where $c_1>c_2$ depend on the values of $\epsilon$, $\mu$ and $T_0$ as described in Theorem~\ref{Main4Theorem}.

%
%
\et

In Theorem \ref{Main6Theorem}, the second term of the upper bound (on the right-hand side of the equation) comes from the fact that Brownian motion is random, and the first term arises in an analogous way as it does for deterministic initial data in Theorem~\ref{Main4Theorem}.

As proved in \cite[Theorem~2.17]{BCFV}, $h^{\mathrm{Br}}_T(0)$ converges in law to the Baik-Rain distribution (see \cite{BR00, FS06, SS04, PS04, BFP10}). The following corollary strengthens the notion of that convergence and implies that the moments of $h^{\mathrm{Br}}_T(0)$ converge to the moments of the limiting Baik-Rains distribution. This answers a question posed to us by Jean-Dominique Deutschel (namely, that the variance converges).
\bc\label{ShortCor}
Let $X$ be a Baik-Rains distributed random variable (see \cite[Definition~2.16]{BCFV}). Then, $\mathbb{E}[e^{t |X|}]<\infty$ and for all $t\in \RR$,
\begin{align}\label{eq:MGFConv}
\mathbb{E}\big[e^{t |h^{\mathrm{Br}}_T(0)|}\big]\rightarrow \mathbb{E}\big[e^{t |X|}\big], \quad \text{as } T\to \infty.
\end{align}
\ec

\begin{proof}
Theorems~\ref{Main3Theorem} and \ref{Main6Theorem} show that $e^{t|h^{\mathrm{Br}}_T(0)|}$ is uniformly integrable. The dominated convergence theorem, along with \cite[Theorem~2.17]{BCFV} yields \eqref{eq:MGFConv} and $\mathbb{E}[e^{t |X|}]~<~\infty$.
\end{proof}

\subsection{Previous work and further directions}\label{sec:previouswork}

The study of tail probabilities for the KPZ equation and the SHE has a number of motivations including intermittency and large deviations. We recall some of the relevant previous literature here and compare what is done therein to the results of this present work.

The first result regarding the lower tail probability of $\mathcal{Z}(T,X)$ the proof of its almost sure positivity by \cite{Mueller91}. Later, \cite{CN08} investigated the lower tail of the SHE restricted on the unit interval with general initial data and Dirichlet boundary condition; they bounded $\mathbb{P}(\log \mathcal{Z}(T,X)\leq -s)$ from above by $c_1\exp(-c_2s^{\frac{3}{2}-\delta})$  (where $c_1,c_2$ are two positive constants depending inexplicitly on $T$). In \cite{GMF14}, this upper bound was further improved to $c_1\exp(-c_2s^{2})$ for the delta initial data SHE (the constants are different but still depend inexplicitly on $T$). Using these bounds, \cite{CorHam16} demonstrated similar upper bounds on the lower tail probability of the KPZ equation under general initial data. There are also tail bounds for the fractional Laplacian ($\Delta^{\alpha/2}$ with $\alpha\in (1,2]$) SHE. \cite[Theorem 1.5]{CHN16} generalizes the bound of \cite{CN08} and shows an upper bound\footnote{In light of our results, it might natural to expect the true decay exponent is $3-1/\alpha$. Perhaps the methods of \cite{GMF14} can be applied to give decay at least with exponent $2$. Heuristically, one may be able to see the true exponent by using the physics weak noise theory as in, for example, \cite{MKV16b}.} with exponent $2- 1/\alpha$ ($=3/2$ when $\alpha=2$).

None of the previous SHE lower tail bounds were suitable to taking time $T$ large. Specifically, the constants depend inexplicitly on $T$ and the centering by $T/24$ and scaling by $T^{1/3}$ were not present. Thus, as $T$ grows, the bounds weaken significantly to the point of triviality. For instance, one cannot conclude tightness of the centered and scaled version of $\log \mathcal{Z}(T,X)$ ($\Upsilon_T(X)$ herein) as $T$ goes to infinity using the bounds.

The first lower tail bounds suitable to taking $T$ large came in our previous work \cite{CG18} which dealt with the delta initial data SHE (see Proposition \ref{NotMainTheorem} herein). That result relied upon an identity of \cite{BorGor16} (see Proposition~\ref{ppn:PropConnection}). No analog of that identity seems to exist for general initial data. This is why we use the KPZ line ensemble approach in our present work.

%


The upper tail probability of the SHE had been studied before in a number of places. For instance, see \cite{ChD15, CJK, KKX17} in regards to its connection to the moments and the intermittency property \cite{GaMo90, GKM07} of the SHE. Again, there is a question of whether results are suitable to taking $T$ large. The only such result is \cite[Corollary 14]{CQ11} which shows that for some constants $c_1, c_2, c^{\prime}_1, c^{\prime}_2$, and $s,T\geq 1$,
$ \mathbb{P}(\Upsilon_T>s)\leq c_1\exp(-c^{\prime}_1s T^{1/3})+ c_2\exp(-c^{\prime}_2s^{3/2}).$
When $s \ll  T^{\frac{2}{3}}$ the second bound is active and one sees the expected $3/2$ power-law in the exponent. However, as $s\gg T^{\frac{2}{3}}$, the leading term above become $c_1\exp(-c^{\prime}_1s T^{\frac{1}{3}})$ and only demonstrates exponential decay.  Our result (Theorem~\ref{GrandUpTheorem}) shows that $c_1\exp(-c^{\prime}_1s T^{\frac{1}{3}})$ is not a tight upper bound for $\mathbb{P}(\Upsilon_T>s)$ in this regime of $s$. In fact, the $3/2$ power-law is shown to be valid for all $s$ even as $T$ grows (with upper and lower bounds of this sort).

Some works have focused on the large $s$ but fixed $T$ upper tail, e.g. \cite{CJK} showed that
$
\log \mathbb{P}\big(\log \mathcal{Z}(T,X)> s\big) \asymp - s^{\frac{3}{2}} \quad \text{as }s\to \infty
$
where $Z(0,X) \equiv 1$. These results are not suitable for taking $T$ and $s$ large together.
Our results (Theorems~\ref{Main4Theorem}, \ref{GrandUpTheorem} and \ref{Main6Theorem}) provide the first upper and lower bound for the upper tail probability which are well-adapted to taking $T$ large. In particular, we showed that for a wide range of initial data the exponent of the upper tail decay is always $\frac{3}{2}$ (a result which was not proved before for any specific initial data). However, the constants in the exponent for our bounds on the upper tail probability are not optimal.

It is natural to speculate on the values of these optimal coefficients. There is some discussion of this in the physics literature (see, for example, \cite{MKV16b, HLMRS18}) based on numerics and the weak noise theory (WNT)\footnote{The approach is to look at the KPZ equation in short time with very weak noise. This is a different problem than looking at the deep tail, but so far the results one gets from the WNT seem to be true even in long time.}. In the deep lower tail (the $5/2$ exponent region) the coefficient depends on the initial data and can be predicted using the WNT as in \cite{MKV16b}. For the shallow lower tail (the $3$ exponent region) one expects (by reason of continuity) to have a coefficient corresponding to the tail decay of the KPZ fixed point with the corresponding initial data. Remarkably, for the upper tail (the $3/2$ exponent region) it seem that for all deterministic initial data, the upper tail coefficient remains the same\footnote{For instance, for flat and narrow wedge initial data, the upper tail seems to have the same $4/3$ coefficient.}. However, for Brownian initial data, the coefficient changes by a factor of 2.

There have been previous considerations of tail bounds in the direction of studying large deviations for the KPZ equation (i.e., the probability that as $T\to \infty$,  $\log \mathcal{Z}(T,X)$ looks like $cT$ for some constant not equal to $-1/24$). The speed for the upper tail and lower tail are different (the former being $T$ and the later being $T^2$). The lower tail large deviation principle has been the subject of significant study in the physics literature (see  \cite{SMP17,CGKLT18,KD18b,KD18a} and references therein). Recently, \cite{LCT18} provided a rigorous proof of the lower tail rate function. We are not aware of a rigorous proof of the (likely) simpler upper tail rate function for the KPZ equation (there is some non-rigorous predictions about this, see e.g. \cite{PSG16}). However, for a discrete analog (the log-gamma polymer) and a semi-discrete analog (the O'Connell-Yor polymer) such an upper tail bound is proved in \cite{GS13} and \cite{Janjigian15} respectively.

We finally mention a few directions worth pursuing. Theorem \ref{Main1Theorem} only provides an upper bound on the lower tail. Our KPZ line ensemble methods are able to produce a lower bound, but with a worse (larger) power law. It is only for the narrow wedge initial data that we have a tight matching lower bound. We conjecture that there should be a similarly tight upper and lower bound for the lower tail which holds true for general initial data.
The large deviation result for the lower tail (see \cite{SMP17,CGKLT18,LCT18}) is only shown for narrow wedge initial data (though there is also some work needed for flat and Brownian initial data). It would be interesting to determine how the large deviation rate function depends on the initial data. In fact, even for the KPZ fixed point (e.g. TASEP) this does not seem to be resolved.
\smallskip

\noindent\textbf{Outline.}
Section~\ref{Tools} reviews the KPZ line ensemble and its Gibbs property. Sections~\ref{Proof1Theorem} and \ref{Proof2Theorem}  establish the lower tail bounds of Theorems~\ref{Main1Theorem} and~\ref{Main3Theorem} by first analyzing the narrow wedge initial condition tails and then feeding those bounds into an argument leveraging the Gibbs property and the convolution formula of Proposition~\ref{Distribution}. We prove the upper tail bounds of Theorem~\ref{GrandUpTheorem} in Section~\ref{UpperTailNSEC} by analyzing the moment formula (see Lemma~\ref{MomBoundLem}) and the Laplace transform formula (see Proposition~\ref{ppn:PropConnection}) of the narrow wedge solution. Sections ~\ref{Proof4Theorem} and~\ref{Proof5Theorem} contain the proofs of (respectively) Theorems~\ref{Main4Theorem} and~\ref{Main6Theorem} on the upper tail bounds under general initial data.

\smallskip

\noindent\textbf{Acknowledgements.}
 We thank J. Baik, G. Barraquand, S. Das, P. Le Doussal, A. Krajenbrink, J. Quastel, L.-C. Tsai, and B. Virag for helpful conversations and comments, as well as an anonymous referee for many helpful comments. I.C. was supported in part by a Packard Fellowship for Science and Engineering, and by NSF DMS-1811143, DMS-1664650.

\section{KPZ line ensemble}\label{Tools}


This section reviews (following the work of \cite{CorHam16}) the KPZ line ensemble and its Gibbs property. We use this construction in order to transfer one-point information (namely, tail bounds) into spatially uniform information for $\Upsilon_T(y)$ (see Definition~\ref{bd:ColeHopf}). It is through this mechanism that we can escape the bonds of exact formulas and generalize the conclusions of \cite{CG18} to general  initial data.

\bd\label{LineEnsemble}
Fix intervals $\Sigma\subset \NN$ and $\Lambda\subset \RR$. Let $\mathcal{X}$ be the set of all continuous functions $f:\Sigma\times \Lambda \mapsto \RR$ endowed with the topology of uniform convergence on the compact subsets of $\Sigma\times \Lambda$. Denote the sigma field generated by the Borel subsets of $\mathcal{X}$ by $\mathcal{C}$.

 A $\Sigma\times \Lambda$-indexed \emph{line ensemble} $\mathcal{L}$ is a random variable in a probability space $(\Omega, \mathfrak{B}, \mathbb{P})$ such that it takes values in $\mathcal{X}$ and is measurable with respect to $(\mathfrak{B}, \mathcal{C})$. In simple words, $\mathcal{L}$ is a collection of $\Sigma$-indexed random continuous curves, each mapping $\Lambda$ to $\RR$.

 Fix two integers $k_1\leq k_2$, $a<b$ and two vectors $\vec{x}, \vec{y}\in \RR^{k_2-k_1+1}$. A $\{k_1, \ldots ,k_2\}\times (a,b)$ - indexed line ensemble is called a \emph{free Brownian bridge} line ensemble with the entrance data $\vec{x}$ and the exit data $\vec{y}$ if its law, denoted here as $\mathbb{P}^{k_1,k_2, (a,b), \vec{x}, \vec{y}}_{\mathrm{free}}$, is that of $k_2-k_1+1$ independent Brownian bridges starting at time $a$ at points $\vec{x}$ and ending at time $b$ at points $\vec{y}$. We use the notation $\mathbb{E}^{k_1,k_2, (a,b), \vec{x}, \vec{y}}_{\mathrm{free}}$ for the associated expectation operator.

 Consider a continuous function $\mathbf{H}:[0,\infty)\to \RR$, which we call a {\it Hamiltonian}. Given $\mathbf{H}$ and two measurable functions $f:[0,\infty)\to \RR\cup \{\infty\}$ and $g:[0,\infty)\to \RR\cup \{-\infty\}$, we define a $\{k_1, \ldots ,k_2\}\times (a,b)$ - indexed line ensemble with the entrance data $\vec{x}$, the exit data $\vec{y}$, boundary data $(f,g)$ and $\mathbf{H}$ to be the law of $\mathbb{P}^{k_1,k_2, (a,b), \vec{x}, \vec{y}, f, g}_{\mathbf{H}}$ on curves $\mathcal{L}_{k_1}, \ldots , \mathcal{L}_{k_2}:[0,\infty)\to \RR$ which is given in terms of the following Radon-Nikodym derivative
 \begin{align}
 \frac{d\mathbb{P}^{k_1,k_2, (a,b), \vec{x}, \vec{y}, f,g}_{\mathbf{H}}}{d\mathbb{P}^{k_1,k_2,(a,b)}_{\mathrm{free}}}(\mathcal{L}_{k_1}, \ldots , \mathcal{L}_{k_2}) &= \frac{W^{k_1,k_2,(a,b),\vec{x}, \vec{y} f,g}_{\mathbf{H}}(\mathcal{L}_{k_1}, \ldots , \mathcal{L}_{k_2})}{Z^{k_1, k_2, (a,b), \vec{x}, \vec{y}, f, g}_{\mathbf{H}}}\\
 W^{k_1, k_2, (a,b), \vec{x}, \vec{y}, f,g}_{\mathbf{H}} (\mathcal{L}_{k_1}, \ldots , \mathcal{L}_{k_2})&= \exp\left\{- \sum_{k=k_1-1}^{k_2}\int^{b}_{a} \mathbf{H}\big(\mathcal{L}_{k_1+1}(u)- \mathcal{L}_{k}(u)\big) du\right\}
 \end{align}
 with the convention $\mathcal{L}_{k_1-1}= f$  and $\mathcal{L}_{k_2+1}=g$. Here, the normalizing constant is given by
 \begin{align}
 Z^{k_1,k_2, (a,b), \vec{x}, \vec{y}, f,g}_{\mathbf{H}} = \mathbb{E}^{k_1,k_2, (a,b)}_{\mathrm{free}}\big[  W^{k_1, k_2, (a,b), \vec{x}, \vec{y}, f, g}_{\mathbf{H}} (\mathcal{L}_{k_1}, \ldots , \mathcal{L}_{k_2})\big]
 \end{align}
 where the curves $(\mathcal{L}_{k_1}, \ldots , \mathcal{L}_{k_2})$ are distributed via $\mathbb{P}^{k_1,k_2, (a,b), \vec{x},\vec{y}}_{\mathrm{free}}$. Throughout this paper we will restrict our attention to one parameter family of Hamiltonians indexed by $T\geq 0$:
 \begin{align}\label{eq:Hamiltonian}
 \mathbf{H}_T(x): = e^{T^{1/3} x}.
\end{align}


A $\Sigma\times \Lambda$-indexed line ensemble  $\mathcal{L}$ satisfies the $\mathbf{H}$-\emph{Brownian Gibbs property} if for any subset $K=\{k_1, k_1+1, \ldots , k_2\} \subset \Sigma$ and $(a,b)\subset \Lambda$, one has the following distributional invariance
  \begin{align}\label{eq:GibbsProperty}
  \mathrm{Law}\left(\mathcal{L}\big|_{K\times (a,b)} \text{ conditional on } \mathcal{L}\big|_{\Sigma\times \Lambda \backslash K\times (a,b)}\right) = \mathbb{P}^{k_1,k_2, (a,b), \vec{x}, \vec{y}, f,g}_{\mathbf{H}}
\end{align}
where $\vec{x} = (a_{k_1}, \ldots , a_{k_2})$, $\vec{y} = (b_{k_1}, \ldots , b_{k_2})$ and $f=\mathcal{L}_{k_1-1}|_{(a,b)}$, $g= \mathcal{L}_{k_2+1}$ with $f=-\infty$ if $k_1-1\notin \Sigma$ and $g= +\infty$ if $k_2+1\notin \Sigma$. This is a spatial Markov property --- the ensemble in a given region has marginal distribution only dependent on the boundary-values of said region.

 Denote the sigma field generated by the curves with indices outside $K\times (a,b)$ by $\mathcal{F}_{\mathrm{ext}}(K\times (a,b))$. The random variable $(\mathfrak{a}, \mathfrak{b})$ is a $K$-\emph{stopping domain} if
$
\{\mathfrak{a}\leq a, \mathfrak{b}\geq b\}\in \mathcal{F}_{\mathrm{ext}}(K\times (a,b)).
$
Let $C^{K}(a,b)$ be the set of continuous functions $(f_{k_1}, \ldots , f_{k_2})$ where $f_i:(a,b)\to \RR$ and define
\begin{align}
C^{K}:= \Big\{(a,b, f_{k_1}, \ldots , f_{k_2}): a<b \text{ and }(f_{k_1}, \ldots , f_{k_2})\in C^{K}(a,b) \Big\}.
\end{align}
Denote the set of all Borel measurable functions from $C^{K}$ to $\RR$ by $\mathcal{B}(C^{K})$. Then, a $K$-stopping domain $(\mathfrak{a}, \mathfrak{b})$ is said to satisfy the \emph{strong} $\mathbf{H}$-\emph{Brownian Gibbs property} if for all $F\in \mathcal{B}(C^{K})$, following holds $\mathbb{P}$-almost surely,
\begin{align}\label{eq:StrongGibbs}
\mathbb{E}\Big[F\big(\mathfrak{a}, \mathfrak{b},\mathcal{L}\big|_{K\times (\mathfrak{a}, \mathfrak{b})}\big)\Big| \mathcal{F}_{\mathrm{ext}}(K\times (\mathfrak{a}, \mathfrak{b}))\Big] = \mathbb{E}^{k_1,k_2,(\ell,r),\vec{x}, \vec{y}, f,g}_{\mathbf{H}}\Big[F(\ell, r, \mathcal{L}_{k_1}, \ldots , \mathcal{L}_{k_2})\Big]&&&&
\end{align}
 where $\ell = \mathfrak{a}$, $r=\mathfrak{b}$, $\vec{x} = \{\mathcal{L}_{i}(\mathfrak{a})\}^{k_2}_{i=k_1}$, $\vec{y} = \{\mathcal{L}_{i}(\mathfrak{b})\}^{k_2}_{i=k_1}$, $f(\cdot) = \mathcal{L}_{k_1-1}(\cdot)$ (or $+\infty$ if $k_1-1\notin \Sigma$) and $g(\cdot) = \mathcal{L}_{k_2+1}(\cdot)$ (or $-\infty$ if $k_2+1\notin \Sigma$). On the l.h.s. of \eqref{eq:StrongGibbs}, $\mathcal{L}\Big|_{K\times (\mathfrak{a}, \mathfrak{b})}$ is the restriction of the $\mathbb{P}$-distributed curves and on the r.h.s. $\mathcal{L}_{k_1}, \ldots , \mathcal{L}_{k_2}$ is $\mathbb{P}^{k_1,k_2,(\ell,r),\vec{x}, \vec{y}, f,g}_{\mathbf{H}}$-distributed.
\ed

 \begin{rem}\label{LineEnsembleToBM}
When $k_1=k_2=1$ and $(f,g) = (+\infty, -\infty)$ the measure $\mathbb{P}^{k_1,k_2,(a,b), \vec{x}, \vec{y}, f,g}_{\mathbf{H}}$ is same as the measure of a free Brownian bridge started from $\vec{x}$ and ended at $\vec{y}$.
 \end{rem}
The following lemma demonstrates a sufficient condition under which the strong $\mathbf{H}$-Brownian Gibbs property holds.
\bl[Lemma 2.5 of \cite{CorHam16}]
Any line ensemble which enjoys the $\mathbf{H}$-Brownian Gibbs property also enjoys the strong $\mathbf{H}$-Brownian Gibbs property.
\el

The next proposition relates the narrow wedge KPZ equation to the KPZ line ensemble\footnote{Note, we do not require the full strength of the result proved in Theorem~2.15 of \cite{CorHam16}. That result also proves uniform over $T$ of the local Brownian nature of the top curve $\Upsilon^{(1)}_T(x)$ as $x$ varies.}.

\bp[Theorem~2.15 of \cite{CorHam16}]\label{NWtoLineEnsemble} Fix any $T>0$. Then there exists an $\NN\times \RR$-indexed line ensemble $\mathcal{H}_T =\{\mathcal{H}^{n}_{T}(x)\}_{n\in \NN, x\in \RR}$ satisfying the following properties:
\be
\ii The lowest indexed curve $\mathcal{H}^{1}_{T}(X)$ is equal in distribution (as a process in $X$) the Cole-Hopf solution $\mathcal{H}^{\mathbf{nw}}(T,X)$ of KPZ started from the narrow wedge initial data.
 \ii $\mathcal{H}_T$ satisfies the $\mathbf{H}_{1}$-Brownian Gibbs property (see Definition~\ref{LineEnsemble}).
 \ii Define the scaled KPZ line ensemble $\{\Upsilon^{(n)}_T(x)\}_{n\in \NN,x\in \RR}$ as follows
 \begin{align}\label{eq:UsilonNDef}
 \Upsilon^{(n)}_T(x) := \frac{\mathcal{H}^{n}_{2T}\big((2T)^{\frac{1}{3}} x\big)+ \frac{T}{12}}{T^{\frac{1}{3}}}.
\end{align}
Then, $\{2^{-\frac{1}{3}}\Upsilon^{(n)}_T(x)\}_{n\in \NN,x\in \RR}$ satisfies the $\mathbf{H}_{2T}$-Brownian Gibbs property\footnote{This pesky $2^{-\frac{1}{3}}$ compensates for the fact that it is missing in the denominator of $\Upsilon^{(n)}_T(x)$.}
\ee
\ep

  The following proposition is a monotonicity result which shows that two line ensembles with the same index set can be coupled in such a way that if the boundary conditions of one ensemble dominates the other, then likewise do the curves.

 \bp[Lemmas ~2.6 and 2.7 of \cite{CorHam16}]\label{Coupling1}
Fix an interval $K=\{k_1, \ldots , k_2\}\subset \Sigma $ for some fixed positive integers $k_1<k_2$, $(a,b)\subset \Lambda$ for $a<b$ and two pairs of vectors $\vec{x}_{1},\vec{x}_{2}$ and $\vec{y}_{1}, \vec{y}_{2}$ in $ \RR^{k_2-k_1+1}$. Consider any two pairs of measurable functions $f, \tilde{f}: (a,b)\to\RR \cup \{+\infty\} $ and $g, \tilde{g}: (a,b) \to \RR \cup \{-\infty\}$ such that $\tilde{f}(s)\leq f(s)$, $\tilde{g}(s)\leq  g(s)$ for all $s\in (a,b)$ and $x^{(k)}_{2}\leq x^{(k)}_{1}$, $y^{(k)}_{2}\leq y^{(k)}_{1}$ for all $k\in K$. Let $\mathcal{Q}= \{\mathcal{Q}^{(n)}(x)\}_{n\in K, x\in(a,b)}$ and $\widetilde{\mathcal{Q}}= \{\widetilde{\mathcal{Q}}^{(n)}(x)\}_{n\in K, x\in(a,b)}$ be two $K\times (a, b)$-indexed line ensembles in the probability space $(\Omega, \mathcal{B}, \mathbb{P})$ and $(\widetilde{\Omega}, \widetilde{\mathcal{B}}, \widetilde{\mathbb{P}})$ respectively such that $\mathbb{P}$ equals to $ \mathbb{P}^{k_1, k_2, (a,b), \vec{x}_1, \vec{y}_1, f,g}_{\mathbf{H}}$ and $\widetilde{\mathbb{P}}$ equals to $ \mathbb{P}^{k_1, k_2, (a,b), \vec{x}_2, \vec{y}_2, \tilde{f},\tilde{g}}_{\mathbf{H}}$.
If $H:[0,\infty)\to \RR$ is convex, then, there exists a coupling (i.e., a common probability space upon which both measures are supported) between $\mathbb{P}$ and $\widetilde{\mathbb{P}}$ such that $\widetilde{\mathcal{Q}}^{(j)}(s)\leq \mathcal{Q}^{(j)}(s)$ for all $n\in K$.
 \ep

%

Let us provide the basic idea behind how we use Lemma~\ref{Coupling1}. Note that by $\mathbf{H}$-Brownian Gibbs property the lowest indexed curve  $2^{-\frac{1}{3}}\Upsilon^{(1)}_T(\cdot)$ of the $\NN$-indexed KPZ line ensemble $\{2^{-\frac{1}{3}}\Upsilon^{(n)}_{T}(x)\}_{n\in \NN, x\in \RR}$, when restricted to the interval $(a,b)$, has the conditional measure $\mathbb{P}^{1,1,(a,b), 2^{-\frac{1}{3}}\Upsilon^{(1)}_{T}(a), 2^{-\frac{1}{3}}\Upsilon^{(1)}_T(b), +\infty, 2^{-\frac{1}{3}}\Upsilon^{(2)}_T}_{\mathbf{H}_{2T}}$. On the other hand, replacing $2^{-\frac{1}{3}}\Upsilon^{(2)}_T$ by $-\infty$, \newline  $\mathbb{P}^{1,1,(a,b), 2^{-\frac{1}{3}}\Upsilon^{(1)}_{T}(a), 2^{-\frac{1}{3}}\Upsilon^{(1)}_T(b), +\infty, -\infty}_{\mathbf{H}_{2T}}$ is the probability measure of a Brownian bridge on the interval $(a,b)$ with the entrance and exit data $2^{-\frac{1}{3}}\Upsilon^{(1)}_{T}(a)$ and $2^{-\frac{1}{3}}\Upsilon^{(1)}_{T}(b)$ respectively. Lemma~\ref{Coupling1} constructs a coupling between these two measures on the curve $2^{-\frac{1}{3}}\Upsilon^{(1)}_T\big\vert_{(a,b)}$ such that
\begin{align}\label{eq:MeasureDom}
\mathbb{P}^{1,1,(a,b), 2^{-\frac{1}{3}}\Upsilon^{(1)}_{T}(a), 2^{-\frac{1}{3}}\Upsilon^{(1)}_T(b), +\infty, 2^{-\frac{1}{3}}\Upsilon^{(2)}_T}_{\mathbf{H}_{2T}}(\mathcal{A})\leq \mathbb{P}^{1,1,(a,b), 2^{-\frac{1}{3}}\Upsilon^{(1)}_{T}(a), 2^{-\frac{1}{3}}\Upsilon^{(1)}_T(b), +\infty, -\infty}_{\mathbf{H}_{2T}}(\mathcal{A})
\end{align}
for any event $\mathcal{A}$ whose chance increases\footnote{If increase is replaced by decrease, then, the inequality \eqref{eq:MeasureDom} is reversed.} under the pointwise decrease of $\Upsilon^{(1)}_T$.

In most of our applications of this idea, it is easy to find upper bounds on the r.h.s. of \eqref{eq:MeasureDom} using Brownian bridge calculations. Via \eqref{eq:MeasureDom}, those bounds transfers to the spatial process $\Upsilon^{(1)}_T(\cdot)$. Since, by Proposition \ref{NWtoLineEnsemble}, this curve is equal in law to $\Upsilon_T(\cdot)$ (the scaled and centered narrow wedge KPZ equation solution), these bounds in conjunction with the convolution formula of Proposition~\ref{Distribution} embodies the core of our techniques to generalize the tail bounds from narrow wedge to general initial data. The following lemma is used in controlling the probabilities which arise on r.h.s. of \eqref{eq:MeasureDom}.

\bl\label{BBFlucLem} Let $B(\cdot)$ be a Brownian bridge on $[0,L]$ with $B(0)= x$ and $B(L) = y$. Then,
\begin{align}\label{eq:BBineq}
\mathbb{P}\Big(\inf_{t\in [0,L]} B(t)\leq \min\{x,y\}-s\Big)\leq e^{- \frac{2s^2}{L}}.
\end{align}
\el
\begin{proof}
Due to symmetry, we may assume $\min\{x,y\} = y$. Note that $\tau=\min\{t\in [0,1]: B(t)\leq y\}$ is a stopping time for the natural filtration  of $B(\cdot)$. Thanks to the resampling invariance property of the Brownian bridge measure, $\{B(t)\}_{t\in [\tau,L]}$ conditioned on the sample paths outside the interval $(\tau,L)$ is again distributed as a Brownian bridge with $B(\tau)=B(L) =y$. Now, applying \cite[(3.40)]{KS91} (see also Lemma~2.11 of \cite{CorHam16}), we get
\begin{align}\label{eq:BBfluc}
\mathbb{P}\Big(\inf_{t\in (\tau, L] }B(t)\leq \min\{x,y\} -s \Big| \mathcal{F}([0,\tau])\Big)= e^{-\frac{2s^2}{(L-\tau)}}.
\end{align}
Here, $\mathcal{F}([0,\tau])$ denotes the natural filtration of $\{B\}_{t\in [0,L]}$ stopped at time $\tau$. Taking expectation of \eqref{eq:BBfluc} with respect to the $\sigma$-algebra $\mathcal{F}_\tau$ and noting  $e^{-\frac{2s^2}{(L-\tau)}}\leq e^{-\frac{2s^2}{L}}$ yields \eqref{eq:BBineq}.
\end{proof}
 It is worth noting that Proposition 4.3.5.3 of \cite{MMFM} contains an exact formulas for the left hand side of \eqref{eq:BBineq}.
The next result (which follows from \cite[(3.14)]{GT11}) is used in Theorem~\ref{Main6Theorem}.
\bl\label{BMminusParabola}
Let $B(\cdot)$ be a two-sided standard Brownian motion with $B(0) =0 $. Then, for any given $\xi\in (0,1)$, there exists $s_0=s_0(\xi)$ such that for all $c>0$ and $s\geq s_0$,
\begin{align}\label{eq:BMParaFluc}
\mathbb{P}\Big(B(t)\geq s+ct^2 \text{ for some }t\in \RR\Big)\leq \frac{1}{\sqrt{3}}e^{-\frac{8(1-\xi)\sqrt{c}s^{\frac{3}{2}}}{3\sqrt{3}}}.
\end{align}
\el

 \section{Lower tail under general initial data}\label{LowerTailSEC}

 In this section, we prove Theorems~\ref{Main1Theorem}  and \ref{Main3Theorem}.  Starting with the tail bounds of Proposition~\ref{NotMainTheorem}, we first bound the lower tail probabilities of the narrow wedge solution at a countable set of points of $\RR$ (see Lemma~\ref{MeshBound}). Combining this with the Brownian Gibbs property of the narrow wedge solution and the growth conditions of initial data (given in Definition~\ref{Hypothesis}), we prove the lower tail bound of Theorem~\ref{Main1Theorem} in Section~\ref{Proof1Theorem} via the convolution formula of Proposition~\ref{Distribution}. By controlling the fluctuations of a two sided Brownian motion in small intervals, we prove the lower tail bound of Theorem~\ref{Main3Theorem} (see Section~\ref{Proof2Theorem}) in a similar way.



\subsection{Proof of Theorem~\ref{Main1Theorem}}\label{Proof1Theorem}

Recall that the initial data  $\mathcal{H}_0$ is defined from $f$ via \eqref{eq:LowInitBd}. Also recall the definition of $\Upsilon_T(\cdot)$ from \eqref{eq:DefUpsilon}. Fix the sequence $\{\zeta_n\}_{n\in \ZZ}$ where $\zeta_n:=\frac{n}{s^{1+\delta}}$. Let us define the following events
\begin{align}
\mathcal{A}^{f} &:= \left\{ \int^{\infty}_{-\infty} e^{T^{\frac{1}{3}}\big(\Upsilon_T(y)+ f(-y)\big)} dy\leq e^{-T^{\frac{1}{3}}s}\right\},\label{eq:PrincipleEvent1}\\
E_n &:= \left\{\Upsilon_T(\zeta_n)\leq -\frac{(1+2^{-1}\nu)\zeta^2_n}{2^{2/3}}-(1-\epsilon)s\right\}, \label{eq:PrincipleEvent2}\\
F_n &:= \left\{\Upsilon_T(y)\leq -\frac{(1+\nu) y^2}{2^{2/3}}-\left(1-\frac{\epsilon}{2}\right)s \quad \text{ for some }y \in (\zeta_n, \zeta_{n+1})\right\}. \label{eq:PrincipleEvent3}
\end{align}
Here, we suppress the dependence on the various variables.
By \eqref{eq:DistUnderGI2} of Proposition~\ref{Distribution}, $\mathbb{P}(h^{f}_T(0)\leq -s) = \mathbb{P}(\mathcal{A}^{f})$
which we need to bound. To begin to bound this, note that
\begin{align}\label{eq:BasicStep}
\mathbb{P}(\mathcal{A}^{f}) &\leq \mathbb{P}\Big(\bigcup_{n\in \ZZ} E_n\Big) + \mathbb{P}\Big(\mathcal{A}^{f}\cap \Big(\bigcup_{n\in \ZZ} E_n\Big)^{c}\Big)\leq \sum_{n\in \ZZ}\mathbb{P}\big(E_n\big)+\mathbb{P}\Big(\mathcal{A}^{f}\cap \Big(\bigcup_{n\in \ZZ} E_n\Big)^{c}\Big).
\end{align}
We focus on bounding separately the two terms on the right side of \eqref{eq:BasicStep}.

\bl\label{MeshBound}
 There exist $s_0=s_0(\epsilon, \delta, C, \nu, T_0)$ and $K_{*}=K_{*}(\epsilon, \delta, T_0)>0$ such that for all $T\geq T_0$ and $s\geq s_0$,
\begin{equation}\label{eq:SumOfProbBd}
\sum^{\infty}_{n=-\infty}\mathbb{P}\left(E_n\right)\leq e^{-T^{1/3} \frac{4(1-\epsilon)s^{5/2}}{15\pi}} + e^{-K_{*}s^{3-\delta} - \epsilon s T^{1/3}}+e^{-\frac{(1-\epsilon)s^3}{12}}.
\end{equation}
\el

\begin{proof}
Recall that the one point distribution of $\Upsilon_T(y)+\frac{y^2}{2^{2/3}}$ is independent of $y$ (see Proposition~\ref{StationarityProp}). Setting $s_n:= (1-\epsilon)s+ \frac{\nu\zeta^2_n}{2^{5/3}}$ and invoking Propositions~\ref{StationarityProp} and \ref{NotMainTheorem}, we write
\begin{align}\label{eq:EnProb}
\mathbb{P}\left(E_n\right)= \mathbb{P}(\Upsilon_T(0)\leq -s_n)\leq e^{-T^{1/3}(1-\epsilon)\frac{4s^{5/2}_n}{15\pi}} + e^{-Ks^{3-\delta}_n - \epsilon s_n T^{1/3}}+e^{-(1-\epsilon)\frac{s^3_n}{12}}.
\end{align}
 Applying the reverse Minkowski inequality, we get $s^{\alpha}_n\geq ((1-\epsilon)s)^{\alpha} + (\nu n^2\kappa^2/2^{5/3}s^2)^{\alpha}$   for all $\alpha\geq 1$. Plugging this into \eqref{eq:EnProb} and
summing over all $n\in \ZZ$, we get
\begin{align}
\sum_{n\in \ZZ}\mathbb{P}(E_n)\leq &  e^{-T^{1/3}(1-\epsilon)\frac{4s^{5/2}}{15\pi}}\sum_{n\in \ZZ} e^{-T^{1/3}K_{1}\frac{|n|^5}{s^5}} +  e^{-(1-\epsilon)\frac{s^3}{12}} \sum_{n\in \ZZ} e^{-K_{2} \frac{n^6}{s^6}}\\& + e^{- K s^{3-\delta} - \epsilon s T^{1/3}} \sum_{n\in \ZZ} e^{ -K_3 \frac{|n|^{2(3-\delta)}}{s^{2(3-\delta)}}- \epsilon\frac{\nu}{2^{5/3}}\frac{n^2}{s^2} T^{1/3}}\label{eq:SumExpand}
\end{align}
for three positive constants $K_1$, $K_2$ and $K_3$. By a direct computation, we observe
\begin{align}
\sum_{n\in \ZZ} e^{-T^{1/3} K_1 s^{-5}|n|^{5}} \leq K^{\prime}_1 T^{-\frac{1}{3}}s^{5}, &\qquad \sum_{n\in \ZZ} e^{- K_2 \frac{n^6}{s^6}} \leq K^{\prime}_2 s^{6},\label{eq:SumBound1}\\  \sum_{n\in \ZZ} e^{ -K_3 \frac{|n|^{2(3-\delta)}}{s^{2(3-\delta)}}- \epsilon\frac{\nu}{2^{5/3}}\frac{n^2}{s^2} T^{\frac{1}{3}}} & \leq K^{\prime}_3 \Big(s^{3(2-\delta)}+s^2 T^{-\frac{1}{3}}\Big) .
\label{eq:SumBound2}
\end{align}
Combining \eqref{eq:SumBound1} and \eqref{eq:SumBound2} with \eqref{eq:SumExpand} yields \eqref{eq:SumOfProbBd}.
\end{proof}

  Now it suffices to control the second term on the right side of \eqref{eq:BasicStep}. We start by showing:
  \smallskip

\bl\label{EffOfInitCond}
Under the assumption that $f$ belongs to the class $\mathbf{Hyp}(C, \nu, \theta, \kappa, M)$, there exists $s_1=s_1(C,\nu, \theta, \kappa, M)$ such that for all $s\geq s_1$,
\begin{align}\label{eq:Containment}
\bigcap_{n\in \ZZ}\{E^c_n\cap F^c_n\} \subset (\mathcal{A}^{f})^c.
\end{align}
\el

\begin{proof}
Assume the events on the l.h.s. of \eqref{eq:Containment} occur. Appealing to \eqref{eq:LowInitBd}, we observe
\begin{align*}
\int^{\infty}_{-\infty} & e^{T^{1/3}\big(\Upsilon_T(y)+f(-y)\big)} dy
\geq \int_{\mathcal{I}}e^{-T^{1/3}\Big(\frac{(1+\nu /2) }{2^{2/3}}y^2+(1-\frac{\epsilon}{2})s- \kappa\Big)}dy  \geq \theta e^{-T^{1/3}\Big(\frac{1+\nu/2}{2^{2/3}}M^2+\kappa-\frac{\epsilon s}{2}\Big)}e^{-T^{\frac{1}{3}} s}.&
\end{align*}
Clearly, there exists $s_1 = s_1(C,\nu, \theta, \kappa, M)$ such that
 the right side above is bounded below by $e^{-T^{\frac{1}{3}} s}$ for all $s\geq s_1$. This shows the claimed containment of the events in \eqref{eq:Containment}.
\end{proof}

Owing to \eqref{eq:Containment} and then, Bonferroni's union bound,
\begin{equation}\label{eq:BonfrBd}
\mathbb{P}\Big(\mathcal{A}^{f}\cap \Big(\bigcup_{n\in \ZZ} E_n \Big)^{c}\Big) = \mathbb{P}\Big(\mathcal{A}^{f}\cap \Big\{\bigcap_{n\in \ZZ} E^{c}_n\Big\} \cap \Big\{\bigcup_{n\in \ZZ} F_n\Big\} \Big)\leq \sum_{n\in \ZZ}\mathbb{P}\left(E^{c}_n\cap E^c_{n+1} \cap F_n\right).
\end{equation}
We obtain an upper bound of the r.h.s. of \eqref{eq:BonfrBd} in the following lemma.
 \bl\label{LineEnsmbUse}
 There exists $s_2 =s_2(\epsilon)>0$ such that for all $s\geq s_2$
\begin{equation}\label{eq:ResidualEvent}
\sum_{n\in \ZZ}\mathbb{P}\left( E^{c}_n\cap E^c_{n+1} \cap F_n\right)\leq e^{-s^{3+\delta}}.
\end{equation}
\el

Combining \eqref{eq:BonfrBd} with \eqref{eq:ResidualEvent}  of Lemma~\ref{LineEnsmbUse} yields
 \begin{align}\label{eq:UnionProbBd}
 \mathbb{P}\Big(\mathcal{A}^{f}\cap \Big(\bigcup_{n\in \ZZ} E_n \Big)^{c}\Big)\leq  e^{-s^{3+\delta}}
 \end{align}
 for some $\delta>0$. Plugging the bounds \eqref{eq:SumOfProbBd} and \eqref{eq:UnionProbBd} into the r.h.s. of \eqref{eq:BasicStep} yields \eqref{eq:UpperBoundDetData}. To complete the proof of Theorem~\ref{Main1Theorem}, it only remains to prove Lemma~\ref{LineEnsmbUse} which we show below.

\begin{proof}[Proof of Lemma~\ref{LineEnsmbUse}]  We aim to bound $\mathbb{P}(E^{c}_n \cap E^{c}_{n+1}\cap F_n)$. By Proposition~\ref{NWtoLineEnsemble}, $\Upsilon_T$ equals in law the curve $\Upsilon^{(1)}_{T}$ of the scaled KPZ line ensemble $\{2^{-\frac{1}{3}}\Upsilon^{(n)}_T(x)\}_{n\in \NN,x\in \RR}$. Hence, without loss of generality, we replace $\Upsilon_T$ by $\Upsilon^{(1)}_T$ in the definitions of $E_n$ and $F_n$ for the rest of this proof. By the $\mathbf{H}_{2T}$-Brownian Gibbs property of $\{2^{-\frac{1}{3}}\Upsilon^{(n)}_T(x)\}_{n\in \NN,x\in \RR}$,
\begin{align}
\mathbb{P}(E^c_n &\cap E^c_{n+1}\cap F_n)= \mathbb{E}\Big[ \mathbbm{1}(E^c_n\cap E^c_{n+1})\cdot \mathbb{E}\big[\mathbbm{1}(F_n)|\mathcal{F}_{\mathrm{ext}}(\{1\}, (\zeta_n,\zeta_{n+1}))\big]\Big]\\&= \mathbb{E}\Big[\mathbbm{1}(E^c_n\cap E^c_{n+1})\cdot \mathbb{P}^{1,1,(\zeta_n, \zeta_{n+1}), 2^{-\frac{1}{3}}\Upsilon^{(1)}_T(\zeta_n), 2^{-\frac{1}{3}}\Upsilon^{(1)}_T(\zeta_{n+1}), +\infty, 2^{-\frac{1}{3}}\Upsilon^{(2)}_T}_{\mathbf{H}_{2T}}(F_n)\Big].\label{eq:LineEnsemble}
\end{align}
Recall $\mathcal{F}_{\mathrm{ext}}(\{1\}, (\zeta_n,\zeta_{n+1}))$ is the $\sigma$-algebra generated by $\{\Upsilon^{(n)}_T(x)\}_{n\in \NN,x\in \RR}$ outside the set $\{\Upsilon^{(1)}_T(x):x\in (\zeta_n, \zeta_{n+1})\}$.
Via Proposition~\ref{Coupling1}, there exists a monotone coupling between the probability measures $\mathbb{P}_{\mathbf{H}_{2T}}:=\mathbb{P}^{1,1,(\zeta_n, \zeta_{n+1}), 2^{-\frac{1}{3}}\Upsilon^{(1)}_T(\zeta_n), 2^{-\frac{1}{3}}\Upsilon^{(1)}_T(\zeta_{n+1}), +\infty, 2^{-\frac{1}{3}}\Upsilon^{(2)}_T}_{\mathbf{H}_{2T}}$ and $\widetilde{\mathbb{P}}_{\mathbf{H}_{2T}}:=\mathbb{P}^{1,1,(\zeta_n, \zeta_{n+1}), 2^{-\frac{1}{3}}\Upsilon^{(1)}_T(\zeta_n), 2^{-\frac{1}{3}}\Upsilon^{(1)}_T(\zeta_{n+1}), +\infty, -\infty}_{\mathbf{H}_{2T}}= \mathbb{P}^{1,1, (\zeta_n, \zeta_{n+1}), 2^{-\frac{1}{3}}\Upsilon^{(1)}_{T}(\zeta_n), 2^{-\frac{1}{3}}\Upsilon^{(1)}_T(\zeta_{n+1})}_{\mathrm{free}}$ such that
\begin{align}\label{eq:MeasureDominition}
\mathbb{P}_{\mathbf{H}_{2T}}(F_n)\leq \widetilde{\mathbb{P}}_{\mathbf{H}_{2T}}(F_n).
\end{align}
The r.h.s. of \eqref{eq:MeasureDominition} is a probability with respect a Brownian bridge measure.
For the rest of the proof, we use shorthand notation $\theta_n:= (1-\epsilon)s+2^{-\frac{2}{3}}(1+2^{-1}\nu)\zeta^2_n$ for $ n\in \ZZ$.
The probability of the event $F_n$ increases under the pointwise decrease of the end points of $\Upsilon^{(1)}_T$.
Using $\{E^c_n\cap E^c_{n+1}\}= \{\Upsilon^{(1)}_T(\zeta_n)\geq -\theta_n\}\cap \{\Upsilon^{(1)}_T(\zeta_{n+1})\geq -\theta_{n+1}\}$ and Proposition~\ref{NWtoLineEnsemble},
\begin{align}
\mathbbm{1}(E^{c}_n\cap E^c_{n+1})\times \widetilde{\mathbb{P}}_{\mathbf{H}_{2T}}(F_n)\leq \mathbb{P}^{1,1,(\zeta_n, \zeta_{n+1}), -2^{-\frac{1}{3}}\theta_n, -2^{-\frac{1}{3}}\theta_{n+1}}_{\mathrm{free}}(F_n). \label{eq:OrderedProb}
\end{align}
 Combining \eqref{eq:MeasureDominition} and \eqref{eq:OrderedProb} yields
\begin{align}
\mathbbm{1}(E^{c}_n\cap E^c_{n+1})\times &\mathbb{P}^{1,1,(\zeta_n, \zeta_{n+1}), 2^{-\frac{1}{3}}\Upsilon^{(1)}_T(\zeta_n), 2^{-\frac{1}{3}}\Upsilon^{(1)}_T(\zeta_{n+1}), +\infty, 2^{-\frac{1}{3}}\Upsilon^{(2)}_T}_{\mathbf{H}_{2T}}(F_n)\\&\leq \mathbb{P}\Big(\min_{x\in [\zeta_n,\zeta_{n+1}]}B(t)\leq 2^{-\frac{1}{3}}\{\theta_n\wedge \theta_{n+1}\}-\frac{\epsilon s}{2^{4/3}}-\frac{\nu \zeta^2_n}{4}\Big)\label{eq:BrownianDominition}
\end{align}
where $B(\cdot)$ is a Brownian bridge such that $B(\zeta_n)= -2^{-\frac{1}{3}}\theta_{n}$ and $B(\zeta_{n+1}) = -2^{-\frac{1}{3}}\theta_{n+1}$. Applying Lemma~\ref{BBFlucLem} yields $\text{r.h.s. of \eqref{eq:BrownianDominition}}\leq e^{-2^{1/3}s^{1+\delta}\big( \frac{\epsilon s}{2^{4/3}}+\frac{\nu \zeta^2_n}{4}\big)^2}$.
Combining this upper bound with \eqref{eq:BrownianDominition} and taking the expectations, we arrive at
\begin{align}\label{eq:FinalUpperBound}
\mathbb{P}(E^c_n\cap E^c_{n+1}\cap F_n)\leq e^{-2^{1/3}s^{1+\delta}\big( \frac{\epsilon s}{2^{4/3}}+ \frac{\nu n^2}{4 s^{2(1+\delta)}}\big)^2}.
\end{align}
Summing both side of \eqref{eq:FinalUpperBound} over $n\in \ZZ$, we obtain \eqref{eq:ResidualEvent}.
\end{proof}

\subsection{Proof of Theorem~\ref{Main3Theorem}}\label{Proof2Theorem}

This proof is similar to that of Theorem~\ref{Main1Theorem}. We use the same notations $\zeta_n$, $E_n$ and $F_n$ introduced in the beginning of the proof of Theorem~\ref{Main1Theorem} and additionally define
\begin{align}\label{eq:ABr}
\mathcal{A}^{\mathrm{Br}}= \left\{\int^{\infty}_{-\infty} e^{T^{1/3}\big(\Upsilon_T(y)+ B(-y)\big)} dy \leq e^{-T^{\frac{1}{3}}s}\right\}
\end{align}
where $B$ is a two sided Brownian motion with diffusion coefficient $2^{\frac{1}{3}}$ and $B(0)=0$. In particular, $B(y)\stackrel{d}{=} \widetilde{B}(2^{\frac{2}{3}}y)$ where $\widetilde{B}(\cdot)$ is standard two sided Brownian motion. Owing to \eqref{eq:DistUnderGI2}, $\mathbb{P}(h^{\mathrm{Br}}(0)\leq -s)= \mathbb{P}(\mathcal{B}^{\mathrm{Br}})$ which we need to bound. As in \eqref{eq:BasicStep}, we write
\begin{align}\label{eq:BasicStep2}
\mathbb{P}\big(\mathcal{A}^{\mathrm{Br}}\big)\leq \sum_{n\in \ZZ} \mathbb{P}(E_n)+\mathbb{P}\Big(\mathcal{A}^{\mathrm{Br}} \cap \Big(\bigcap_{n\in \ZZ} E_n\Big)^{c}\Big).
\end{align}
 We can use \eqref{eq:SumOfProbBd} of Lemma~\ref{MeshBound} to bound $\sum_{n}\mathbb{P}(E_n)$. While the conclusion of Lemma~\ref{EffOfInitCond} does not hold in the present case, we will show that it does hold with high probability.

 \bl\label{BrowLowLemma}
There exist $s_1=s_1(\epsilon, \delta)$, $c_1=c_1(\epsilon),c_2=c_2(\epsilon)>0$ such that for all $s\geq s_1$,
\begin{align}\label{eq:SmallProb}
\mathbb{P}\Big(\bigcap_{n\in \ZZ} \big\{E^c_n \cap F^c_n\big\}\cap \mathcal{A}^{\mathrm{Br}}\Big)\leq c_1e^{-c_2s^{3+\delta}}.
\end{align}
\el

Combining \eqref{eq:ResidualEvent} of Lemma~\ref{LineEnsmbUse} and \eqref{eq:SmallProb} of Lemma~\ref{BrowLowLemma} yields
\begin{align}\label{eq:ResEvent}
\mathbb{P}\left(\mathcal{A}^{\mathrm{Br}}\cap \Big(\bigcup_{n\in \ZZ} E_n\Big)^c\right)\leq c_2e^{-c_1s^{3+\delta}}.
\end{align}
Applying \eqref{eq:ResEvent} and \eqref{eq:SumOfProbBd} to \eqref{eq:BasicStep2}, we obtain \eqref{eq:UpBoundBrData}. To complete the proof of Theorem~\ref{Main3Theorem}, we now need to prove Lemma~\ref{BrowLowLemma} which is given as follows.

\begin{proof}[Proof of Lemma~\ref{BrowLowLemma}] Observe first that
\begin{align}
\bigcap_{n\in \ZZ} \big\{E^c_n \cap F^c_n\big\}\cap \mathcal{A}^{\mathrm{Br}} &\subseteq \Big\{ \int^{\infty}_{-\infty} e^{-T^{1/3}\big(\frac{(1+\nu)y^2}{2^{2/3}}-\frac{\epsilon s}{2}-B_2(y) \big)}dy\leq 1\Big\}.\label{eq:ContainmentProb}
\end{align}
Note that if $B(y)\geq -\frac{\epsilon}{4}s$ for all $y\in [-1/s^{1+\delta}, 1/s^{1+\delta}]$, then,
$\frac{(1+\nu)y^2}{2^{2/3}}-\frac{\epsilon s}{2}-B(y)\leq -\frac{\epsilon}{8}s$ for all $y \in [-1/s^{1+\delta}, 1/s^{1+\delta}]$
which implies
\begin{align}
\int_{-\infty}^{\infty}  e^{-T^{1/3}\big(\frac{(1+\nu)y^2}{2^{2/3}}-\frac{\epsilon s}{2}-B(y) \big)}dy \geq \frac{2}{s^{1+\delta}}e^{\frac{\epsilon s}{8}T^{1/3}}> 1
\end{align}
when $s$ is large. Hence, there exists $s_1=s_1(\epsilon, \delta)$ such that for all $s\geq s_1$, one has
\begin{align}
\left\{\int^{\infty}_{-\infty} e^{-T^{1/3}\big(\frac{(1+\nu)y^2}{2^{2/3}}-\frac{\epsilon s}{2}-B(y) \big)}dy \leq 1\right\}\subseteq \left\{ \min_{y\in [-1/s^{1+\delta}, 1/s^{1+\delta}]}B(y)< -\frac{\epsilon}{4}s\right\}.
\end{align}
Thanks to this containment, we get
\begin{equation}\label{eq:TailOfBPlusPar}
\mathbb{P}\Big(\bigcap_{n\in \ZZ} \big\{E^c_n \cap F^c_n\big\}\cap \mathcal{A}^{\mathrm{Br}}\Big)\leq \mathbb{P}\Big(\min_{y\in [-\frac{1}{s^{1+\delta}}, \frac{1}{s^{1+\delta}}]}B(y)<-\frac{\epsilon}{4}s\Big).
\end{equation}
We bound the r.h.s. of \eqref{eq:TailOfBPlusPar}, via the reflection principle as
\begin{align}\label{eq:RefPrinc}
 \mathbb{P}\Big(\min_{y\in [-\frac{1}{s^{1+\delta}}, \frac{1}{s^{1+\delta}}]}B(y)\leq -\frac{\epsilon}{4}s\Big)\leq \mathbb{P}\Big(2|X_1| +2 |X_2|\geq \frac{\epsilon}{4}s\Big)
\end{align}
  where $X_1, X_2$ are independent Gaussians with variance $2^{\frac{1}{3}}s^{-(1+\delta)}$. By tail estimates, it follows that the r.h.s. of \eqref{eq:RefPrinc} is bounded above by $c_1e^{-c_2 s^{3+\delta}}$ for some constants $c_1, c_2>0$ which only depend on $\epsilon$.
Plugging this into \eqref{eq:TailOfBPlusPar} and combining with \eqref{eq:ContainmentProb}, we find \eqref{eq:SmallProb}.
\end{proof}

\section{Upper Tail under narrow wedge initial data}\label{UpperTailNSEC}

 The aim of this section is to prove Theorem~\ref{GrandUpTheorem}. To achieve this, we first state a few auxiliary results which combine together to prove Theorem~\ref{GrandUpTheorem}. These auxiliary results are proved in the end of Section~\ref{UpperTailNSEC}. Recall the definition of $\Upsilon_T$ from \eqref{eq:DefUpsilon}. Our first result of this section (Proposition~\ref{thm:UpTail}) gives an upper and lower bound for the probability $\mathbb{P}(\Upsilon_T(0)\geq s)$. These bounds are close to optimal when $s\gg T^{\frac{2}{3}}$. When $s=O(T^{\frac{2}{3}})$ or $s\ll T^{\frac{2}{3}}$, those bounds are not optimal (see Remark~\ref{UpperTailNewRemark}). In those cases, we obtain better bounds using Proposition~\ref{UpperDeepTails}.

\bp\label{thm:UpTail}
Fix some $\zeta\leq \epsilon\in (0, 1)$ and $T_0>0$. There exists $s_0=s_0(\epsilon, \zeta, T_0)$ such that for all $s\geq s_0$ and $T\geq T_0$,
\begin{align}
\mathbb{P}\left(\Upsilon_T(0)> s\right)&\leq e^{-T^{1/3} \zeta s}+ e^{- \frac{4}{3} (1-\epsilon)s^{3/2}},\label{eq:UpBoundBd}\\
1-\exp\big(-e^{-\zeta s T^{1/3}}\big)\mathbb{P}\left(\Upsilon_T(0)\leq s\right)&\geq e^{-T^{1/3}(1+\zeta) s}+ e^{- \frac{4}{3}(1+\epsilon) s^{3/2}}.\label{eq:LowBound}
\end{align}
\ep

\begin{rem}\label{UpperTailNewRemark}
Proposition~\ref{thm:UpTail} implies that for $s\ll T^{\frac{2}{3}}$
\begin{align}\label{eq:Me}
 \exp\Big(- \frac{4}{3}(1+\epsilon)s^{\frac{3}{2}}\Big) \leq \mathbb{P}\big(\Upsilon_T(0)>s\big) \leq \exp\Big(- \frac{4}{3}(1-\epsilon)s^{\frac{3}{2}}\Big).
\end{align}
To see this, we first note that
\begin{align}
\text{r.h.s. of \eqref{eq:UpBoundBd}} \leq
\exp\Big(- \frac{4}{3}(1-\epsilon)s^{\frac{3}{2}}\Big), & \qquad \text{when }s\ll T^{\frac{2}{3}}.\label{eq:UpTailPhases}
\end{align}
Using the approximation $1-\exp\big(-e^{-\zeta s T^{1/3}}\big)\approx \exp(-\zeta s T^{1/3})$, we see that \eqref{eq:LowBound} implies
\begin{align}\label{eq:LowBdCons}
 \mathbb{P}\big(\Upsilon_T(0)> s\big)\geq \exp\big(e^{-\zeta s T^{\frac{1}{3}}}\big)\Big(e^{-(1+\zeta)s T^{\frac{1}{3}}} - e^{-\zeta s T^{\frac{1}{3}}} + e^{-\frac{4}{3}(1+\epsilon)s^{\frac{3}{2}}}\Big).
\end{align}
The r.h.s. of \eqref{eq:LowBdCons} is bounded below by $\exp(- \frac{4}{3}(1+\epsilon)s^{\frac{3}{2}})$ when $s\ll T^{\frac{2}{3}}$.
 Note, when $s\gg T^{\frac{2}{3}}$, the dominating term of the r.h.s. of \eqref{eq:UpBoundBd} is $\exp(-\zeta s T^{1/3})$ which we show in our next theorem is the not correct order of decay of $\mathbb{P}(\Upsilon_T(0)>s)$.

\end{rem}



 \bp\label{UpperDeepTails}
  Fix $\epsilon\in (0,1)$. Then, for all pairs $(s,T)$ satisfying $s\geq \frac{9}{16}\epsilon^{-2}T^{\frac{2}{3}}$ and $T>\pi$,
 \begin{align}
 \mathbb{P}(\Upsilon_T(0)> s) &\leq e^{- \frac{4(1-\epsilon)}{3}s^{3/2}} \label{eq:TightUpBd}\\
 \mathbb{P}(\Upsilon_T(0) > s) & \geq e^{-4\sqrt{3}(1+3\epsilon)s^{3/2}}\label{eq:TightLowBd}
 \end{align}
 Furthermore, for all $s\in \big[\frac{1}{8}\epsilon^2 T^{\frac{2}{3}}, \frac{9}{16} \epsilon^{-2} T^{\frac{2}{3}}\big]$,
 \begin{align}\label{eq:LowBdAllWay}
 \mathbb{P}\big(\Upsilon_T(0)>s\big)\geq \frac{1}{2}e^{-2^{7/2}\epsilon^{-3}s^{3/2}}.
\end{align}
Moreover, for any $0<T_0\leq \pi$ and $\epsilon \in (0,3/5)$, there exist $c_1=c_1(T_0)>c_2=c_2(T_0)>0$ such that for all $T\in [T_0,\pi]$ and $s\geq \frac{9}{16}\epsilon^{-2}T^{\frac{2}{3}} + 24T^{-\frac{1}{3}}_0(1-\epsilon)^{-1}|\log (T_0/\pi)|$,
\begin{align}\label{eq:UpLowRough}
e^{-c_1s^{3/2}}\leq \mathbb{P}(\Upsilon_T(0)>s) \leq e^{-c_2s^{3/2}}.
\end{align}

 \ep

%

\bp\label{UpperDeepTailLemma}
 Fix $\epsilon\in (0,1)$, $T>\pi$ and $c>\frac{4}{3}\big(1+\frac{1}{3}\epsilon\big)$. Then, there exists $\{s_n\}_{n}$ such that $s_n \to \infty$ as $n\to \infty$ and $\mathbb{P}(\Upsilon_T(0)>s_n)\geq e^{-cs^{3/2}_n}$ for all $n\in \NN$.
\ep

\subsection{Proof of Theorem~\ref{GrandUpTheorem}}

We first show \eqref{eq:RoughBd1} when $T_0 \in (0, \pi)$. Fix $\epsilon \in (0,\frac{3}{4})$ and define $s_0= \frac{9}{16}\epsilon^{-2} \pi^{\frac{2}{3}}+ 3(1-\epsilon)^{-1} T^{\frac{1}{3}}_0|\log T_0|$.  Then, for all $T\in [T_0, \pi]$ and $s\geq s_0$, \eqref{eq:RoughBd1} follows from \eqref{eq:UpLowRough}.
\vspace{0.2cm}

Now, we show \eqref{eq:RoughBd1} for $T_0>\pi$. Fix $\zeta = \epsilon\in \big(0,\frac{1}{2}\big)$. Proposition~\ref{thm:UpTail} says that there exists $s_0=s_0(\epsilon, T_0)$ such that \eqref{eq:UpBoundBd} and \eqref{eq:LowBound} holds for all $s\geq s_0$ and $T>T_0$.

\noindent (i) For all $s\in (0, \frac{1}{8}\epsilon^{2} T^{\frac{2}{3}})$, we note
\begin{equation}\label{eq:obs1}
 \frac{4}{3}(1+\epsilon)s^{\frac{3}{2}}\leq 2 s^{\frac{3}{2}} \leq \frac{1}{\sqrt{2}}\epsilon sT^{\frac{1}{3}}
\end{equation}
where the first and second inequalities follow from $\epsilon\leq \frac{1}{2}$ and $s\leq \frac{1}{8}\epsilon^2 T^{\frac{2}{3}}$ respectively. Furthermore, there exists $s^{\prime}_0 = s^{\prime}_{0}(\epsilon, T_0)$ such that for all $s\geq s^{\prime}_0$, one has
\begin{align}\label{eq:obs2}
\exp\big(-\frac{1}{\sqrt{2}}\epsilon s T^{\frac{1}{3}}\big)\geq 2\exp\big( -\epsilon s T^{\frac{1}{3}}\big).
\end{align}
Combining \eqref{eq:obs1} and \eqref{eq:obs2} yields
 \begin{align}\label{eq:obs3}
 \exp(-\frac{4}{3}(1+\epsilon)s^{\frac{3}{2}}) \geq 2 \exp(- \epsilon s T^{\frac{1}{3}}), \qquad \forall s\in  (s^{\prime}_0, \frac{1}{8}\epsilon^2 T^{\frac{2}{3}}).
 \end{align}
Plugging this into the r.h.s. of \eqref{eq:UpBoundBd} yields
\begin{align}\label{eq:NewUpTBd1}
\mathbb{P}(\Upsilon_T(0)> s)\leq 2\exp\big(-\frac{4}{3}(1-\epsilon)s^{\frac{3}{2}}\big)
\end{align}
for all $s\in (\max\{s_0, s^{\prime}_0\}, \frac{1}{8}\epsilon^2 T^{\frac{2}{3}})$ where $s_0=s_0(\epsilon, T_0)$ comes with Proposition~\ref{thm:UpTail}. Moreover, applying \eqref{eq:obs3} in \eqref{eq:LowBdCons}, we observe
\begin{align}\label{eq:NewUpTBd2}
\mathbb{P}(\Upsilon_T(0)>s)\leq  \frac{1}{2}\exp\big(-\frac{4}{3}(1+\epsilon)s^{3/2}\big).
\end{align}
Combining \eqref{eq:NewUpTBd1} and \eqref{eq:NewUpTBd2}, we obtain \eqref{eq:RoughBd1} with $c_1\leq \frac{4}{3}(1+\epsilon)$ and $c_2 \geq \frac{4}{3}(1-\epsilon)$ for all $s\in (s^{\prime\prime}_0,\frac{1}{8}\epsilon^2 T^{\frac{2}{3}})$ for some $s^{\prime\prime}_0=s^{\prime\prime}_0(\epsilon, T_0)$.
  \smallskip

\noindent (ii)  When $s\geq \frac{9}{16}\epsilon^{-2} T^{\frac{2}{3}}$, we first apply Proposition~\ref{UpperDeepTails}. Using \eqref{eq:TightUpBd} and \eqref{eq:TightLowBd}, yields \eqref{eq:RoughBd1} with $c_1\leq  4\sqrt{3}(1+\epsilon)$ and $c_2\geq  \frac{4}{3}(1-\epsilon)$. The second part of the claim follows from  Proposition~\ref{UpperDeepTailLemma}.
  \smallskip

\noindent (iii)  For all $s\in (\frac{1}{8}\epsilon^2 T^{\frac{2}{3}}, \frac{9}{16}\epsilon^{-2}T^{\frac{2}{3}})$, appealing to \eqref{eq:LowBdAllWay} of Lemma~\ref{UpperDeepTails}, we get $c_1\leq 2^{\frac{7}{2}}\epsilon^{-3}$. Furthermore, one has the following bound on the r.h.s. of \eqref{eq:UpBoundBd}
\begin{align}\label{eq:Interm}
\exp\Big(-\epsilon s T^{\frac{1}{3}}\Big) + \exp\Big(-\frac{4}{3}(1-\epsilon)s^{\frac{3}{2}}\Big)\leq 2\exp\Big(-\min\big\{\epsilon s T^{\frac{1}{3}}, \frac{4}{3}(1-\epsilon)s^{\frac{3}{2}}\big\}\Big). &&
\end{align}
For all $\epsilon\leq \frac{1}{2}$ and $s\in (\frac{1}{8}\epsilon^2 T^{\frac{2}{3}}, \frac{9}{16}\epsilon^{-2}T^{\frac{2}{3}})$, the r.h.s. of \eqref{eq:Interm} is bounded above by $\exp(-\frac{4}{3}\epsilon s^{\frac{3}{2}})$.
Plugging this bound into \eqref{eq:TightLowBd}, we get
\[\mathbb{P}(\Upsilon_T(0)>s)\leq 2 e^{-\frac{4}{3}\epsilon s^{3/2}}, \quad \forall s\in \Big(\max\{s_0, \frac{1}{8}\epsilon^2 T^{\frac{2}{3}}\},\max\{s_0, \frac{9}{16}\epsilon^{-2} T^{\frac{2}{3}}\} \Big).\]
  Therefore, \eqref{eq:RoughBd1} holds when $s$ lies in the interval $(\max\{s_0, \frac{1}{8}\epsilon^2 T^{\frac{2}{3}}\},\max\{s_0, \frac{9}{16}\epsilon^{-2} T^{\frac{2}{3}}\} )$ with $c_1\leq  2^{\frac{7}{2}}\epsilon^{-3}$ and $c_2\geq \frac{4}{3}\epsilon$. This completes the proof of Theorem~\ref{GrandUpTheorem}.

 \subsection{Proof of Proposition~\ref{UpperDeepTails}}

 To prove Proposition~\ref{UpperDeepTails}, we need the following lemma. Let $$\psi_T(k)=
 \begin{cases}
 \frac{k!e^{\frac{Tk^3}{12}}}{2\sqrt{\pi T} k^{\frac{3}{2}}}, & \text{when }T\geq \pi\\
 \frac{\pi^{(k-1)/2}k!e^{\frac{Tk^3}{12}}}{2T^{k/2} k^{\frac{3}{2}}} & \text{when }T<\pi.
 \end{cases} $$

 \bl\label{MomBoundLem} Fix $k\in \NN$ and $T_0\in \RR_{+}$. Then, we have
\begin{align}\label{eq:MomBound}
C\psi_T(k)\leq \mathbb{E}\big[\exp\big(kT^{\frac{1}{3}}\Upsilon_T(0)\big)\big] \leq 69 \psi_T(k)
\end{align}
where $C=C(k,T_0)>0$ is bounded below by $1$ for all $T>T_0>\pi$ and by $ T^{(k-1)/2}_0 \pi^{-k/2}$ for all $T\in [T_0, \pi]$.
\el

\begin{proof}
Recall that $\mathcal{Z}(2T,0)= \exp(T^{\frac{1}{3}}\Upsilon_T(0)-\frac{T}{12})$. The moments of $\mathcal{Z}(2T, 0)$ are given by\footnote{These formulas were formally derived in \cite{BorodinCorwinMac} with a proof given as \cite[Theorem 2.1]{Ghosal18}.}:
\begin{align}
\mathbb{E}\Big[\frac{\exp(kT^{\frac{1}{3}}\Upsilon_T(0))}{k!}
\Big]& = \sum_{\substack{\lambda\vdash k\\ \lambda = 1^{m_1} 2^{m_2}\ldots }} \frac{1}{m_1!m_2!\ldots }\int^{\mathbf{i}\infty}_{-\mathbf{i}\infty} \frac{dw_1}{2\pi \mathbf{i}} \ldots \int_{-\mathbf{i}\infty}^{\mathbf{i}\infty} \frac{dw_{\ell(\lambda)}}{2\pi \mathbf{i}} \mathrm{det}\left[\frac{1}{w_j+\lambda_j- w_i}\right]^{\ell(\lambda)}_{i,j=1}\\
&\times  \exp\left[T \sum_{j=1}^{\ell(\lambda)}\Big(\frac{\lambda^3_j}{12} + \lambda_j\Big(w_j+\frac{\lambda_j}{2}-\frac{1}{2}\Big)^2\Big)\right].\label{eq:MomContFor}
\end{align}
Here, $\lambda\vdash k$ denotes that $\lambda=(\lambda_1\geq \lambda_2\geq \ldots )$ partitions $k$, $\ell(\lambda)=\#\{i:\lambda_i>0\}$ and $m_j = \#\{i:\lambda_i=j\}$.
By Cauchy's determinant formula,
\begin{align}\label{eq:Cauchy}
\mathrm{det}\Big[\frac{1}{w_i+\lambda_i-w_j}\Big] = \prod_{i=1}^{\ell(\lambda)} \frac{1}{\lambda_i}\prod^{\ell(\lambda)}_{i< j} \frac{\big(w_i-w_j+\lambda_i-\lambda_j\big)\big(w_j-w_i\big)}{\big(w_i+\lambda_i-w_j\big)\big(w_j+\lambda_j-w_i\big)}.
\end{align}
Applying \eqref{eq:Cauchy} to \eqref{eq:MomContFor} followed by substituting $\mathbf{i}z_j = T^{\frac{1}{3}}(w_j + \frac{\lambda_j}{2}-\frac{1}{2})$ in \eqref{eq:MomContFor} and deforming the contours to the real axis (note that no pole will be crossed) implies that
     \begin{align*}
    \text{r.h.s. of \eqref{eq:MomContFor}} &=\!\!\!\!\!\! \sum_{\substack{\lambda\vdash k\\ \lambda = 1^{m_1} 2^{m_2}\ldots }} \!\!\!\!\!\!\frac{\prod_{i=1}^{\ell(\lambda)}e^{ \frac{T\lambda^3_i}{12}}/2\pi}{m_1!m_2!\ldots } \int^{\infty}_{-\infty}\!\!\!\!\ldots \int^{\infty}_{-\infty}  \prod_{i=1}^{\ell(\lambda)} \frac{dz_ie^{-T^{\frac{1}{3}}\lambda_i z^2_i }}{T^{\frac{1}{3}}\lambda_i}\prod^{\ell(\lambda)}_{i< j} \frac{\frac{T^{\frac{2}{3}}(\lambda_i-\lambda_j)^2}{4}+ (z_i-z_j)^2}{\frac{T^{\frac{2}{3}}(\lambda_i+\lambda_j)^2}{4} + (z_i-z_j)^2}
     \end{align*}
     Taking $\lambda=(k)$ (i.e., $\lambda_1 =k$ and $\lambda_i=0$ for all $i\geq 2$), evaluating the single integral and noting that all the terms on the r.h.s. above are positive yields the lower bound in \eqref{eq:MomBound} when $T_0>\pi$. 
   In the case when $T_0<\pi$, the term corresponding to $\lambda =(k)$ is bounded below by $T^{(k_0-1)/2}_0\pi^{k/2}\psi_T(k)$ for all $T\in [T_0,\pi]$. This yields the lower bound in \eqref{eq:MomBound} when $T_0<\pi$.    
     

     For the upper bound, we first show that if $\lambda$ is a partition of $k$ not equal to $(k)$ then
     \begin{align}\label{eq:PartitionBd}
     \frac{k^3}{12} -\sum_{j=1}^{\ell(\lambda)} \frac{\lambda^3_j}{12}\geq \frac{k^2-k}{4}
     \end{align}
   with equality only when $\lambda=(k-1,1)$. We prove this by induction. It is straightforward to check that \eqref{eq:PartitionBd} holds when $k=1,2$. Assume \eqref{eq:PartitionBd} holds when $k=k_0-1$. Now we show it for $k=k_0$. Let us assume that $\lambda$ is a partition of $k_0$ and write
\[\frac{k^3_0}{12} -\sum_{j=1}^{\ell(\lambda)} \frac{\lambda^3_j}{12}=\frac{k^3_0}{12} -\frac{(k_0-1)^3+1}{12}+ \frac{(k_0-1)^3+1}{12}-\sum_{j=1}^{\ell(\lambda)} \frac{\lambda^3_j}{12}. \]
The right hand side of the above display is equal to $\frac{k^3_0}{12} -\frac{(k_0-1)^3+1}{12}=\frac{k^2_0-k_0}{4}$ when $\lambda =(k_0-1,1)$. It suffices to show
\begin{align}\label{eq:PartitionBDinduc}
\frac{(k_0-1)^3+1}{12}-\sum_{j=1}^{\ell(\lambda)} \frac{\lambda^3_j}{12}\geq 0
\end{align}
when $\lambda\neq (k_0),(k_0-1,1)$. In the case when $\lambda_{\ell(\lambda)}=1$, the above inequality follows by our assumption since $(\lambda_1, \ldots , \lambda_{\ell(\lambda)-1})$ is a partition of $k_0-1$. For $\lambda_{\ell(\lambda)}>1$, we write
\begin{align}
 \frac{(k_0-1)^3+1}{12}-\sum_{j=1}^{\ell(\lambda)} \frac{\lambda^3_j}{12} &= \frac{(k_0-1)^3}{12}-\sum_{j=1}^{\ell(\lambda)-1} \frac{\lambda^3_j}{12}-\frac{(\lambda_{\ell(\lambda)}-1)^3}{12} -\frac{(\lambda_{\ell(\lambda)}-1)(\lambda_{\ell(\lambda)}-2)}{4}.
 \end{align}
Note that $(\lambda_1, \ldots , \lambda_{\ell(\lambda)}-1)$ is a partition of $k_0-1$. Since $\lambda_{\ell(\lambda)}<k_0$ and \eqref{eq:PartitionBd} holds for $k=k_0-1$, the right hand side of the above display is greater than $0$. This shows \eqref{eq:PartitionBDinduc} and hence, proves \eqref{eq:PartitionBd}.

      We return to the proof of the upper bound in \eqref{eq:MomBound}. Observe that by bounding the cross-product over $i<j$ by 1 and using Gaussian integrals, we may bound
 \begin{align}\label{eq:IntegralBd1}
   \int^{\infty}_{-\infty}\ldots \int^{\infty}_{-\infty} \prod_{i=1}^{\ell(\lambda)} \frac{dz_i  e^{- T^{\frac{1}{3}}\lambda_iz^2_i}}{T^{\frac{1}{3}}\lambda_i}\prod^{\ell(\lambda)}_{i<j} \frac{\frac{T^\frac{2}{3}(\lambda_i-\lambda_j)^2}{4} + (z_i-z_j)^2}{\frac{T^{\frac{2}{3}}(\lambda_i +\lambda_j)^2}{4} + (z_i-z_j)^2} \leq \prod_{i=1}^{\ell(\lambda)} \frac{\sqrt{2\pi}}{\sqrt{2T}\lambda_i^{\frac{3}{2}}}
   \end{align}
When $T>\pi$, the r.h.s. of \eqref{eq:IntegralBd1} $\leq 1$. Otherwise, the r.h.s. of \eqref{eq:IntegralBd1} is bounded above by $(\pi/T)^{k/2}$.
Owing to this, \eqref{eq:PartitionBd}, and $m_1!m_2!\ldots \leq k!$, we get
   \begin{align}
   \mathbb{E}\big[\exp(kT^{\frac{1}{3}}\Upsilon_T(0))\big] &\leq \Big(1+ k^{\frac{3}{2}}e^{-\frac{k^2-k}{4}}\#\{\lambda: \lambda\vdash k\} \Big)\times\begin{cases}
   \frac{k!e^{\frac{k^3T}{12}}}{2\sqrt{\pi T} k^{\frac{3}{2}}} & T\geq \pi\\
 \frac{\pi^{(k-1)/2}k!e^{\frac{k^3T}{12}}}{2T^{k/2} k^{\frac{3}{2}}} & T<\pi.
\end{cases}
    \label{eq:MomUppBd}
   \end{align}
     Applying Siegel's bound (see \cite[pp. 316-318]{Apostol76}, \cite[pp. 88-90]{Knopp70}) on the number partition of any integer $k\geq 1$, we find that
     \begin{align}\label{eq:Siegel}
     k^{\frac{3}{2}}e^{-\frac{k^2-k}{4}}\#\{\lambda:\lambda\vdash k\}\leq k^{\frac{3}{2}}e^{-\frac{k^2-k}{4}+\pi\sqrt{2k/3}}\leq 68 \quad \forall k\in \NN.
     \end{align}
   Combining \eqref{eq:Siegel} with \eqref{eq:MomUppBd} completes the proof of the upper bound in \eqref{eq:MomBound}.



\end{proof}


\begin{proof}[Proof of \eqref{eq:TightUpBd}]
Combining Markov's inequality and the second inequality of \eqref{eq:MomBound}, we get
 \begin{align}\label{eq:UpsilonTB}
 \mathbb{P}(\Upsilon_T(0)\geq  s) \leq 69 \exp\Big(-\max_{k\in \NN} \big[ksT^{\frac{1}{3}}- \log \psi_T(k)\big]\Big).
 \end{align}
 By Stirling's formula $\psi_T(k)=\exp\big(\frac{Tk^3(1+O(k^{-3/2}))}{12}\big)$.
 Set $k_0=\lfloor 2s^{\frac{1}{2}}T^{-\frac{1}{3}}\rfloor$. When $s\geq \frac{9}{16}\epsilon^{-2} T^{\frac{2}{3}}$,
 \begin{align}\label{eq:SpecialKBound}
k_0sT^{\frac{1}{3}}-\log\psi_{T}(k_0)\geq  k_0sT^{\frac{1}{3}}- \frac{Tk^3_0(1+O(\epsilon^{\frac{3}{2}}))}{12} \geq \frac{4\big(1-\epsilon\big)s^{\frac{3}{2}}}{3}. &&&
 \end{align}
 The first inequality of \eqref{eq:SpecialKBound} follows by noting that $k_0\geq c\epsilon^{-1}$ for some positive constant $c$. We get the second inequality of \eqref{eq:SpecialKBound} by noticing that $\lfloor2s^{\frac{1}{2}}T^{-\frac{1}{3}}\rfloor\geq 2s^{\frac{1}{2}}T^{-\frac{1}{3}}-1\geq 2s^{\frac{1}{2}} T^{-\frac{1}{3}}\big(1-\frac{2\epsilon}{3}\big)$.
 Finally, \eqref{eq:TightUpBd} follows by plugging \eqref{eq:SpecialKBound} into the r.h.s. of \eqref{eq:UpsilonTB}.
\end{proof}

\begin{proof}[Proof of \eqref{eq:TightLowBd}] Fixing now $k_0 = \lceil 2\cdot(3(1+5\epsilon/6)s)^{\frac{1}{2}} T^{-\frac{1}{3}}\rceil$, we observe that
\begin{align}\label{eq:MLowBd}
\exp\big(k_0 s T^{\frac{1}{3}}\big)\leq \frac{1}{2}\frac{k_0!}{2\sqrt{\pi T}k^{\frac{3}{2}}_0}\exp\left(\frac{k^3_0 T}{12}\right).
\end{align}
To prove this inequality first note that
\begin{align}\label{eq:IneqSeri1}
k_0 s T^{\frac{1}{3}}\leq \Big(2\cdot \big(3(1+\frac{5\epsilon}{6})s\big)^{\frac{1}{2}} T^{-\frac{1}{3}} +1\Big) s T^{\frac{1}{3}} \leq 2\sqrt{3}\Big(1+\frac{5\epsilon}{12}+\frac{2\epsilon}{3\sqrt{3}}\Big)s^{\frac{3}{2}}.
\end{align}
where the first inequality follows from $\lceil k\rceil\leq k+1$ and the second inequality is obtained using $s\geq \frac{9}{16}\epsilon^{-2}T^{\frac{2}{3}}$. Moreover, using $k!\geq k^{\frac{3}{2}}$ which holds for all $k\in \ZZ_{\geq 3}$, we see
\begin{align}
\text{r.h.s. of \eqref{eq:MLowBd}} \geq \frac{1}{4\sqrt{\pi T}} \exp\Big(2\sqrt{3}\big(1+\frac{5\epsilon}{4}\big)s^{\frac{3}{2}}\Big). \label{eq:IneqSeri2}
\end{align}
Now, \eqref{eq:MLowBd} follows from \eqref{eq:IneqSeri1} and \eqref{eq:IneqSeri2} by noting that $\frac{5}{4}\geq \frac{5}{12}+\frac{2}{3\sqrt{3}}$ and $T\leq\frac{64}{27}(\epsilon^2 s)^{\frac{3}{2}}$.

Combining the first inequality of \eqref{eq:MomBound} with \eqref{eq:MLowBd}  yields
\begin{align}\label{eq:ReArr}
 \mathbb{P}\big(\Upsilon_T(0)> s\big)\geq \mathbb{P}(E),\quad \textrm{with} \quad E = \Big\{\exp\big(k_0T^{\frac{1}{3}}\Upsilon_T(0)\big)> \frac{1}{2}\mathbb{E}[\exp(k_0T^{\frac{1}{3}}\Upsilon_T(0) )]\Big\}.\qquad
\end{align}

\begin{claim} Fix $p,q>1$ such that $p^{-1}+q^{-1}=1$. Then,
\begin{align}\label{eq:PZTypeIneq}
\mathbb{P}(E)\geq 2^{-q}\, \mathbb{E}\big[\exp(k_0T^{\frac{1}{3}}\Upsilon_T(0))\big]^{q}\, \mathbb{E}\big[\exp(pk_0T^{\frac{1}{3}}\Upsilon_T(0))\big]^{-q/p}
\end{align}
\end{claim}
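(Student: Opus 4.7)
The inequality is a standard Paley--Zygmund/Hölder argument applied to the random variable $X := \exp(k_0 T^{1/3} \Upsilon_T(0))$, writing $\mu := \mathbb{E}[X]$ so that $E = \{X > \mu/2\}$.

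The plan is as follows. First I would decompose the mean along the event $E$ and its complement:
\begin{align}
\mu = \mathbb{E}[X] = \mathbb{E}[X\,\mathbf{1}_{E^c}] + \mathbb{E}[X\,\mathbf{1}_{E}].
\end{align}
On $E^c$ one has $X \leq \mu/2$ by definition, so $\mathbb{E}[X\,\mathbf{1}_{E^c}] \leq \mu/2$, which rearranges to the lower bound $\mathbb{E}[X\,\mathbf{1}_{E}] \geq \mu/2$. This is the ``Paley--Zygmund step'' and captures the factor $2^{-q}$ that appears on the right-hand side.

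Next I would bound $\mathbb{E}[X\,\mathbf{1}_E]$ from above using Hölder's inequality with conjugate exponents $p$ and $q$ (i.e.\ $p^{-1}+q^{-1}=1$), applied to the factors $X$ and $\mathbf{1}_E$:
\begin{align}
\mathbb{E}[X\,\mathbf{1}_E] \leq \mathbb{E}[X^p]^{1/p}\,\mathbb{E}[\mathbf{1}_E^{\,q}]^{1/q} = \mathbb{E}[X^p]^{1/p}\,\mathbb{P}(E)^{1/q}.
\end{align}
Chaining this with the previous step gives
\begin{align}
\tfrac{\mu}{2} \leq \mathbb{E}[X^p]^{1/p}\,\mathbb{P}(E)^{1/q},
\end{align}
and raising both sides to the power $q$ yields
\begin{align}
\mathbb{P}(E) \geq 2^{-q}\,\mu^{q}\,\mathbb{E}[X^p]^{-q/p},
\end{align}
which, upon substituting back $X = \exp(k_0 T^{1/3}\Upsilon_T(0))$, is exactly \eqref{eq:PZTypeIneq}.

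Since $\mu < \infty$ (by the upper bound in Lemma~\ref{MomBoundLem}) and only two-line manipulations are involved, there is essentially no obstacle here; the only thing to verify is that $\mathbb{E}[X^p] = \mathbb{E}[\exp(pk_0 T^{1/3}\Upsilon_T(0))]$ is finite, which again follows from Lemma~\ref{MomBoundLem} applied with $k$ replaced by $\lceil pk_0\rceil$ (or by bounding $X^p$ by $\exp(\lceil pk_0\rceil T^{1/3}\Upsilon_T(0))$ in the region where $\Upsilon_T(0)\ge 0$ and separately handling the bounded region $\Upsilon_T(0)<0$). This Hölder step is what forces the quantitative appearance of the larger moment $\mathbb{E}[\exp(pk_0 T^{1/3}\Upsilon_T(0))]$ on the right-hand side, and it is what will ultimately be combined with the moment estimates of Lemma~\ref{MomBoundLem} to deliver the lower tail bound~\eqref{eq:TightLowBd}.
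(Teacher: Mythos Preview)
Your proof is correct and follows essentially the same Paley--Zygmund/H\"older route as the paper: decompose $\mathbb{E}[X]$ along $E$ and $E^c$, bound the $E^c$ contribution by $\mu/2$, apply H\"older with exponents $(p,q)$ to the $E$ contribution, and rearrange. The paper's argument is identical in structure and detail.
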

\begin{proof}
Let us write
\begin{align}
\mathbb{E}\big[\exp(k_0T^{\frac{1}{3}} \Upsilon_T(0))\big]= \mathbb{E}\Big[\exp(k_0T^{\frac{1}{3}}\Upsilon_T(0))\mathbbm{1}\big(E^c\big)\Big]+ \mathbb{E}\Big[\exp(k_0T^{\frac{1}{3}}\Upsilon_T(0))\mathbbm{1}\big(E\big)\Big].&&\label{eq:PZarg}
\end{align}
 The first term on the r.h.s. of \eqref{eq:PZarg} is bounded above by $\frac{1}{2}\mathbb{E}\big[\exp(k_0T^{\frac{1}{3}}\Upsilon_T(0))\big]$. To bound the second term, we use H\"older's inequality
 \begin{align}\label{eq:Holder}
 \mathbb{E}\Big[\exp(k_0T^{\frac{1}{3}}\Upsilon_T(0))\mathbbm{1}\big(E\big)\Big]\leq \Big[\mathbb{E}\big[\exp(pk_0T^{\frac{1}{3}}\Upsilon_T(0))\big]\Big]^{\frac{1}{p}} \mathbb{P}(E)^{\frac{1}{q}}
 \end{align}
where $p^{-1}+q^{-1}=1$. Plugging the upper  bound of \eqref{eq:Holder} into the r.h.s. of \eqref{eq:PZarg} and simplifying yields \eqref{eq:PZTypeIneq} and proves the claim.
  \end{proof}

Returning to the proof of \eqref{eq:TightLowBd}, thanks to \eqref{eq:MomBound}, we find that
\[\text{r.h.s. of \eqref{eq:PZTypeIneq} }\geq \exp\Big(-\frac{q(p^2-1)k^3_0 T(1+O(\epsilon^{3/2}))}{12}\Big).\] From $p^{-1}+q^{-1}=1$, it follows that $q(p^2-1) =p(p+1)$. Taking $p = 1+\epsilon/6$ and recalling that $k_0= \lceil 2\cdot(3(1+5\epsilon/6)s)^{\frac{1}{2}} T^{-\frac{1}{3}}\rceil$, we get
$
\text{l.h.s. of \eqref{eq:PZTypeIneq}}\geq 2^{-q}\exp\big(-4\sqrt{3}(1+3\epsilon/2) s^{\frac{3}{2}}\big).
$
Since $q= 6\epsilon^{-1}+1$, we find that the r.h.s. of the above inequality is bounded below by $\exp(-4\sqrt{3}(1+3\epsilon)s^{\frac{3}{2}})$ for all $s\geq \frac{9}{16}\epsilon^{-2}T$ and $T\geq T_0\geq\pi$. This completes the proof.
\end{proof}

\begin{proof}[Proof of \eqref{eq:LowBdAllWay}]
Fix $k_0=\lceil 2\cdot (3(1+5\epsilon/6)s)^{\frac{1}{2}}T^{-\frac{1}{3}}\rceil $. Our aim is to obtain a lower bound for the r.h.s. of \eqref{eq:ReArr}. Applying \eqref{eq:PZTypeIneq} with $p=q=2$ yields
\begin{align}\label{eq:Obs5}
\mathbb{P}(\Upsilon_T(0)>s)\geq \frac{1}{2}\exp\Big(-\frac{7k^3_0 T}{12}\Big).
\end{align}
For $k_0\geq 2$, we have  $k_0\leq 2(k_0-1)$ which implies $k_0\leq 4\cdot (3(1+\epsilon))^{\frac{1}{2}} T^{-\frac{1}{3}}$ and hence
$\mathbb{P}(\Upsilon_T(0)>s)\geq \frac{1}{2}\exp(- 2^{6}s^{\frac{3}{2}}).
$
When $k_0=1$, r.h.s. \eqref{eq:Obs5}$\geq \frac{1}{2}\exp(-2^{\frac{7}{2}}\epsilon^{-3}s^{\frac{3}{2}})$ for all $s\geq \frac{1}{8}\epsilon T^{\frac{2}{3}}$.
\end{proof}

\begin{proof}[Proof of \eqref{eq:UpLowRough}]
We first prove the second inequality of \eqref{eq:UpLowRough}. Fix $T \in [T_0,\pi]$. Applying Markov's inequality yields
\begin{align}\label{eq:Markov2}
\mathbb{P}(\Upsilon_{T}(0)\geq s)\leq 69 \exp\big(-\max_{k\in \NN}\big[ksT^{\frac{1}{3}}- \log \psi_{T}(k)\big]\big).
\end{align}
Owing to Stirling's formula, we get $\psi_{T}(k)= \exp(Tk^{3}(1+O(k^{-3/2}))-\frac{k}{2}\log T_0)$. Set $k_0= \lfloor 2s^{\frac{1}{3}} T^{-\frac{1}{3}}\rfloor$ and when $s\geq \frac{9}{16}\epsilon^{-2}T^{\frac{2}{3}}+ 24T^{-\frac{1}{3}}_0(1-\epsilon)^{-1}|\log (T_0/\pi)|$, we have
\begin{align}
k_0sT^{\frac{1}{3}} - \log \psi_{T}(k_0)\geq k_0sT^{\frac{1}{3}} - \frac{Tk^3_0(1+O(\epsilon^{\frac{3}{2}}))}{12} + \frac{k}{2}\log T_0\geq \frac{4(1-\epsilon)}{3}s^{\frac{3}{2}} +\frac{k_0}{2}\log T_0. &&\label{eq:SeriesIneq}
\end{align}
for some constant $c=c(\epsilon, T_0)>0$.
The first inequality of \eqref{eq:SeriesIneq} follows since $k_0\geq c\epsilon^{-1}$ for some positive constant $c>0$ and the second inequality follows since $\lfloor 2s^{\frac{1}{2}}T^{-\frac{1}{3}}\rfloor \geq 2s^{\frac{1}{2}}T^{-\frac{1}{3}}(1-\frac{2\epsilon}{3})$. Now, we claim that the r.h.s. of \eqref{eq:SeriesIneq} is bounded below by $(1-\epsilon)s^{\frac{3}{2}}$. To see this, we write
\begin{align}
\frac{k_0}{2}\log T_0\geq \min\{s^{\frac{1}{2}} T^{-\frac{1}{3}}\log T_0, 0\}\geq - \frac{1}{24}s^{\frac{3}{2}}(1-\epsilon)(T_0/T)^{1/3}\geq -\frac{1}{24}(1-\epsilon)s^{\frac{3}{2}}
\end{align}
where the first inequality follows since $k_0\leq 2s^{\frac{1}{2}}T^{-\frac{1}{3}}$, the second inequality holds since $s\geq 24 T^{-\frac{1}{3}}_0(1-\epsilon)^{-1} |\log (T_0/\pi)|$ and the last inequality is obtained by noting that $T_0\leq T$. Substituting the inequalities in the above display in the r.h.s. of \eqref{eq:SeriesIneq} proves the claim. As a consequence, for all $T\in [T_0,\pi]$,
\begin{align}
\max_{k\in \NN}\big[ksT^{\frac{1}{3}}- \log \psi_{T}(k)\big] \geq k_0sT^{\frac{1}{3}} - \log \psi_{T}(k_0)\geq (1-\epsilon)s^{\frac{3}{2}}.
\end{align}
Applying the inequality in the above display in the r.h.s. of \eqref{eq:Markov2} yields the second inequality of \eqref{eq:UpLowRough}.

Now, we turn to show the first inequality of \eqref{eq:UpLowRough}.  Fix $k_0= \lceil 4s^{\frac{1}{2}} T^{-\frac{1}{3}}\rceil$. We claim that for all $T\in [T_0,\pi]$
\begin{align}\label{eq:BdLow}
\exp\big(k_0 sT^{\frac{1}{3}}\big) \leq \frac{1}{2}\big(T_0/T\big)^{\frac{k_0-1}{2}}\frac{k_0!}{2\sqrt{\pi T}k^{\frac{3}{2}}_0}\exp\big(\frac{k^3_0T}{12}\big).
\end{align}
To prove \eqref{eq:BdLow} we note
\begin{align}\label{eq:BdLowHere1}
k_0sT^{\frac{1}{3}}\leq \big(4s^{\frac{1}{2}} T^{-\frac{1}{3}} +1\big)sT^{\frac{1}{3}}\leq 4\big(1+\frac{\epsilon }{3}\big)s^{\frac{3}{2}}
\end{align}
where the first inequality follows since $\lceil k\rceil \leq k+1$ and the second inequality is obtained using $s\geq \frac{9}{16}\epsilon^{-2}T^{2/3}$. Since we know $T_0\leq T\leq \pi$ and $k^3_0T=(\lceil 4s^{\frac{1}{2}} T^{-\frac{1}{3}}\rceil)^3 T\geq 64s^{\frac{3}{2}}$,
\begin{align}\label{eq:BdLowHere2}
\text{r.h.s. of \eqref{eq:BdLow}}\geq \Big(\frac{T_0}{\pi}\Big)^{\frac{k_0-1}{2}}\frac{k_0!}{4\pi k^{\frac{3}{2}}_0} \exp\Big(\frac{64}{12}s^{\frac{3}{2}}\Big)= \Big(\frac{T_0}{\pi}\Big)^{\frac{k_0-1}{2}} \frac{k_0!}{4\pi k^{\frac{3}{2}}_0} \exp\big((5+3^{-1})s^{\frac{3}{2}}\big) &&
\end{align}
By using the fact that $s\geq \frac{9}{16}\epsilon^{-2}T^{2/3}+24T^{-\frac{1}{3}}_0(1-\epsilon)|\log (T_0/\pi)|$ and $\epsilon<3/5$, we get
\begin{align}
 k_0 = \lceil 4s^{\frac{1}{2}} T^{-\frac{1}{3}}\rceil \geq 4s^{\frac{1}{2}} T^{-\frac{1}{3}} > 3\epsilon^{-1}>5, \quad
 \frac{1}{3}s^{\frac{3}{2}} > 2s^{\frac{1}{2}}T^{-\frac{1}{3}}_0 |\log (T_0/\pi)|\geq \frac{k_0-1}{2}|\log (T_0/\pi)|.
\end{align}
Now, \eqref{eq:BdLow} follows from \eqref{eq:BdLowHere1}, \eqref{eq:BdLowHere2} and the inequalities of the above display by noting that $4(1+\epsilon/3)\leq 5$, $k_0\geq 6$ and $(T_0/\pi)^{(k_0-1)/2}\exp(3^{-1}s^{3/2})\geq 1$.

For any $T \in [T_0, \pi]$, combining the first inequality of \eqref{eq:MomBound} with \eqref{eq:BdLow} yields
\begin{align}
\mathbb{P}\big(\Upsilon_{T}(0)>s\big)\geq \mathbb{P}(\tilde{E}), \quad \text{where }\tilde{E} =\Big\{\exp\big(k_0T^{\frac{1}{3}}\Upsilon_{T}(0)\big)> \frac{1}{2}\mathbb{E}\big[\exp\big(k_0 T^{\frac{1}{3}} \Upsilon_{T}(0)\big)\big]\Big\}.
\end{align}
Applying \eqref{eq:PZTypeIneq} with $p=q=2$ shows
\begin{align}\label{eq:LowBd2}
\mathbb{P}\big(\Upsilon_{T}(0)>s\big) \geq \frac{1}{2}\exp\Big(-\frac{7k^3_0 T}{12}\Big)\geq \exp\big(-cs^{\frac{3}{2}}\big)
\end{align}
for some absolute constant $c>0$. The last inequality of the above display follows since $k_0 = \lceil 4s^{\frac{1}{2}} T^{-\frac{1}{3}}\rceil$. Note that \eqref{eq:LowBd2} implies the first inequality of \eqref{eq:UpLowRough}. This completes the proof.
\end{proof}

%


\subsection{Proof of Proposition~\ref{UpperDeepTailLemma}}
We prove this by contradiction. Assume there exists $M>0$ such that $\mathbb{P}(\Upsilon_T(0)>s)\leq e^{-cs^{\frac{3}{2}}}$ for all $s\geq M$. Dividing the expectation integral into $(-\infty,0]$, $[0,M]$ and $(M,\infty)$, we have
\begin{align}
 \mathbb{E}\big[\exp(k\Upsilon_T(0) T^{\frac{1}{3}})\big] \leq 1+MkT^{\frac{1}{3}}e^{kMT^{\frac{1}{3}}}+ \int^{\infty}_{M} kT^{\frac{1}{3}}e^{ks T^{\frac{1}{3}} -cs^{\frac{3}{2}}} ds. \label{eq:1stApprox}
\end{align}
Observing that
\begin{align}\label{eq:Maximizer}
\argmax_{s\geq 0} \big\{ks T^{\frac{1}{3}} - cs^{\frac{3}{2}}\big\} = \frac{4k^2 T^{\frac{2}{3}}}{9 c^2},
\end{align}
we may choose $k$ to be a sufficiently large integer such that the r.h.s. of \eqref{eq:Maximizer} exceeds $M$. Then, approximating the integral of \eqref{eq:1stApprox} by $C^{\prime}kT^{\frac{1}{3}}\exp(\max_{s\geq 0} \big\{ksT^{\frac{1}{3}} -cs^{\frac{3}{2}}\big\})$ for some absolute constant $C^{\prime}=C^{\prime}(k)$ and plugging in the value of the maximizer from \eqref{eq:Maximizer}, we find
\begin{align}\label{eq:MomUpBd}
\mathbb{E}\big[\exp(k\Upsilon_T(0) T^{\frac{1}{3}})\big]\leq (M+1)kT^{\frac{1}{3}} + C^{\prime}kT^{\frac{1}{3}} e^{\frac{4k^3T}{27c^2}}.
\end{align}
Applying $c>\frac{4}{3}\big(1+\frac{1}{3}\epsilon\big)$ into \eqref{eq:MomUpBd} shows that the r.h.s. of \eqref{eq:MomUpBd} is less than $e^{(1-\epsilon)\frac{k^3T}{12}}$ which contradicts \eqref{eq:MomBound}. Hence, the claim follows.



\subsection{Proof of Proposition~\ref{thm:UpTail}}
Our proof of Proposition~\ref{thm:UpTail} relies on a Laplace transform formula for $\mathcal{Z}^{\mathbf{nw}}(T,0)$ which was proved in \cite{BorGor16} and follows from the exact formula for the probability distribution of $\Upsilon_T(0)$ of \cite{Amir11}. It connects $\mathcal{Z}^{\mathbf{nw}}(T,0)$ with the Airy point process $\mathbf{a}_1> \mathbf{a}_2>\ldots $. The latter is a well studied determinantal point process in random matrix theory (see, e.g., \cite[Section~4.2]{AGZ10}).

For convenience, we introduce following shorthand notations:
\begin{align}
\mathcal{I}_s(x) := \frac{1}{1+\exp(T^{\frac{1}{3}}(x-s))} , \qquad \mathcal{J}_s(x):= \log \big(1+\exp(T^{\frac{1}{3}}(x-s))\big).
\end{align}
 It is worth noting that $\mathcal{I}_s(x)= \exp(-\mathcal{J}_s(x))$.
\bp[Theorem~1 of \cite{BorGor16}]\label{ppn:PropConnection}
For all $s\in\RR$,
\begin{align}\label{eq:Connection}
\mathbb{E}_{\mathrm{KPZ}}\Big[\exp\Big(-\exp\big(T^{\frac{1}{3}}(\Upsilon_T(0)-s)\big)\Big)\Big]=\mathbb{E}_{\mathrm{Airy}}\left[\prod_{k=1}^{\infty} \mathcal{I}_s(\mathbf{a}_k)\right].
\end{align}
\ep

We start our proof of Proposition~\ref{thm:UpTail} with upper and lower bounds on the r.h.s. of \eqref{eq:Connection}.

\bp\label{thm:MainTheorem}
Fix some $\zeta\leq \epsilon\in (0, 1)$ and $T_0>0$. Continuing with the notation of Proposition~\ref{ppn:PropConnection}, there exists $s_0=s_0(\epsilon, \zeta, T_0)$ such that for all $s\geq s_0$,
\begin{align}
1- \mathbb{E}\Big[\prod_{k=1}^{\infty} \mathcal{I}_s(\mathbf{a}_k)\Big]&\leq e^{-\zeta s T^{1/3} }+ e^{- \frac{4}{3} (1-\epsilon)s^{3/2}},\label{eq:UpBound}\\
1- \mathbb{E}\Big[\prod_{k=1}^{\infty} \mathcal{I}_s(\mathbf{a}_k)\Big]&\geq e^{-(1+\zeta)s T^{1/3}} + e^{- \frac{4}{3}(1+\epsilon) s^{3/2}}.\label{eq:LowrBound}
\end{align}
\ep

We defer the proof of Proposition~\ref{thm:MainTheorem} to Section~\ref{NWLaplaceTail}.
\begin{proof}[Proof of Proposition~\ref{thm:UpTail}]
 Define $\bar{s}:= (1+\zeta)s$ and $\theta(s) := \exp\big(- \exp\big(T^{\frac{1}{3}}(\Upsilon_T(0)-s)\big)\big)$. Thanks to \eqref{eq:Connection}, we have $\mathbb{E}_{\mathrm{KPZ}}[\theta(s)] = \mathbb{E}_{\mathrm{Airy}}[\prod_{k=1}^{\infty} \mathcal{I}_s(\mathbf{a}_k)]$. Note that
\begin{align}\label{eq:1stStepIneq}
\theta(s)\leq \mathbbm{1}(\Upsilon_T(0)\leq \bar{s})+\mathbbm{1}(\Upsilon_T(0)> \bar{s}) \exp(-\exp(\zeta sT^{1/3})).
\end{align}
Rearranging, taking expectations and applying \eqref{eq:Connection}, we arrive at
\begin{align}\label{eq:StepOfUpperBound}
\mathbb{P}(\Upsilon_T(0)> \bar{s})\leq  \Big(1-\exp(-\exp(\zeta sT^{\frac{1}{3}}))\Big)^{-1} \Big(1-\mathbb{E}_{\mathrm{Airy}}\big[\prod_{k=1}^{\infty} \mathcal{I}_s(\mathbf{a}_k)]\Big).
\end{align}
By taking $s$ sufficiently large and $T\geq T_0$, we may assume that $1-\exp(-\exp(\zeta s T^{\frac{1}{3}}))\geq \frac{1}{2}$. Plugging this bound and \eqref{eq:UpBound} into the r.h.s. of \eqref{eq:StepOfUpperBound} yields
\begin{align}
\mathbb{P}(\Upsilon_T(0)\geq \bar{s}) &\leq e^{-\zeta s T^{1/3}} + e^{-\frac{4}{3}(1-\epsilon)s^{3/2}}
\end{align}
   for all $s\geq s_{0}$ where $s_0$ depends on $\epsilon, \zeta$ and $ T_0$. This proves \eqref{eq:UpBoundBd}.

We turn now to prove \eqref{eq:LowBound}. Using Markov's inequality,
\begin{align}
\mathbb{P}(\Upsilon_T(0)\leq s)= \mathbb{P}\Big(\theta(\bar{s})\geq \exp\big(- e^{-\zeta sT^{1/3}}\big)\Big)\leq \exp\big(e^{-\zeta sT^{1/3}}\big)\cdot \mathbb{E}[\theta(\bar{s})].
\end{align}
Rearranging yields
$1- \exp\Big(- e^{-\zeta s T^{1/3}}\Big)\mathbb{P}(\Upsilon_T(0)\leq s) \geq 1- \mathbb{E}\left[\theta(\bar{s})\right]$.
Finally, applying \eqref{eq:Connection} and \eqref{eq:LowrBound} to the r.h.s. of this result, we get \eqref{eq:LowBound}.
\end{proof}


\subsubsection{Proof of Proposition~\ref{thm:MainTheorem}}\label{NWLaplaceTail}
\begin{proof}[Proof of \eqref{eq:UpBound}]
  We start by noticing the following trivial lower bound
   \begin{align}\label{eq:TrivLowerBound}
   \mathbb{E}_{\mathrm{Airy}}\big[\prod_{k=1}^{\infty} \mathcal{I}_{s}(\mathbf{a}_k)\big] &\geq \mathbb{E}_{\mathrm{Airy}}\big[\prod_{k=1}^{\infty} \mathcal{I}_{s}(\mathbf{a}_k)\mathbbm{1}(\mathbf{A})\big]
   \end{align}
   where $\mathbf{A}=\big\{\mathbf{a}_1\leq  (1-\zeta)s\big\}$.    Setting $k_0:= \lfloor \frac{2}{3\pi}s^{\frac{9}{4}+2\epsilon}\rfloor$ we observe that
   \begin{align}
   \prod_{k=1}^{k_0} \mathcal{I}_s(\mathbf{a}_k)\mathbbm{1}(\mathbf{A})&= \exp\Big(- \sum_{k=1}^{k_0}\mathcal{J}_s(\mathbf{a}_k)\Big)\mathbbm{1}(\mathbf{A})\geq \exp\Big(-\frac{2}{3\pi}s^{\frac{9}{4}+2\epsilon}e^{-T^{\frac{1}{3}s\zeta}}\Big). \label{eq:1stBound}
   \end{align}
  where inequality is obtained via $\mathcal{J}_s(\mathbf{a}_k)\leq e^{-T^{\frac{1}{3}}s\zeta}$ which follows on the event $\mathbf{A}$. Our next task is to bound $\prod_{k>k_0} \mathcal{I}_s(\mathbf{a}_k)$ from below. To achieve this, we recall the result of \cite[Proposition~4.5]{CG18} which shows that for any $\epsilon,\delta\in (0,1)$ we can augment the probability space on which the Airy point process is defined so that there exists a random variable $C^{\mathrm{Ai}}_{\epsilon}$  satisfying
   \begin{equation}\label{eq:AiryConcentration}
(1+\epsilon)\lambda_{k} - C^{\mathrm{Ai}}_{\epsilon}\leq \mathbf{a}_k\leq  (1-\epsilon)\lambda_k+ C^{\mathrm{Ai}}_{\epsilon}\quad \text{for all }k\geq 1\quad \text{ and } \quad \mathbb{P}(C^{\mathrm{Ai}}_{\epsilon}\geq s)\leq e^{- s^{1-\delta}}
\end{equation}
 for all $s\geq s_0$ where $s_0=s_0(\epsilon,\delta)$ is a constant. Here, $\lambda_k$ is the $k$-th zero of the Airy function (see \cite[Proposition~4.6]{CG18}) and we fix some $\delta\in(0, \epsilon)$. Define $\phi(s) := s^{\frac{3+8\epsilon/3}{2(1-\delta)}}$.
Now, we write
\begin{align}\label{eq:ResBound}
\prod_{k>k_0} \mathcal{I}_s(\mathbf{a}_k) \geq \prod_{k>k_0} \mathcal{I}_s(\mathbf{a}_k)\mathbbm{1}(C^{\mathrm{Ai}}_{\epsilon}\leq \phi(s)) &\geq \exp\Big(-\sum_{k>k_0} \mathcal{J}_s\big((1-\epsilon)\lambda_k+ \phi(s)\big)\Big).
\end{align}
 Appealing to the tail probability of $C^{\mathrm{Ai}}_{\epsilon}$, we have
$\mathbb{P}(C^{\mathrm{Ai}}_{\epsilon}\leq \phi(s))\geq 1- e^{-s^{\frac{3}{2}+\frac{4}{3}\epsilon}}$.
We now claim that for some constant $C>0$,
\begin{equation}\label{eq:ResBoundSep}
\sum_{k>k_0} \mathcal{J}_s((1-\epsilon)\lambda_k+ \phi(s)) \leq \frac{C}{T^{\frac{1}{3}}}\exp(-sT^{\frac{1}{3}}).
\end{equation}
To prove this note that for all $k\geq k_0$,
\begin{align}\label{eq:TwinIneq}
\lambda_k \leq -\Big(\frac{3\pi k}{2}\Big)^{\frac{3}{2}} \quad \text{and, } \quad (1-\epsilon)(\frac{3\pi k}{2}\big)^{\frac{3}{2}} - \phi(s) \geq (1-\epsilon)\Big(\frac{3\pi}{2} (k-k_0)\Big)^{\frac{1}{3}}.
\end{align}
The first inequality of \eqref{eq:TwinIneq} is an outcome of \cite[Proposition~4.6]{CG18} and the second inequality follows from \cite[Lemma~5.6]{CG18}. Applying \eqref{eq:TwinIneq},
we get
\begin{align}\label{eq:JBound}
\mathcal{J}_s\Big((1-\epsilon)\lambda_k+ \phi(s)\Big)\leq  e^{T^{1/3}\big(-s -(1-\epsilon)(3\pi k/2)^{2/3}+\phi(s)\big)} \leq e^{T^{1/3}\big(-s-(1-\epsilon)(k-k_0)^{2/3}\big)}.
\end{align}
Summing over $k>k_0$ in \eqref{eq:JBound}, approximating the sum by the corresponding integral, and evaluating yields \eqref{eq:ResBoundSep}.

\smallskip

Now, we turn to complete the proof of \eqref{eq:UpBound}. Plugging \eqref{eq:ResBoundSep} into the r.h.s. of \eqref{eq:ResBound} yields
\begin{align}\label{eq:ResBoundFinal}
\prod_{k>k_0} \mathcal{I}_s(\mathbf{a}_k)\mathbbm{1}(C^{\mathrm{Ai}}_{\epsilon}\leq \phi(s))\geq \exp\left(-\frac{C}{T^{\frac{1}{3}}} \exp(- sT^{\frac{1}{3}})\right).
\end{align}
Combining \eqref{eq:1stBound} and \eqref{eq:ResBoundFinal} yields
\begin{equation}
\text{l.h.s. of \eqref{eq:TrivLowerBound}}\geq \exp\Big(-\frac{2}{3\pi}s^{\frac{9}{4}+2\epsilon}e^{-\zeta sT^{\frac{1}{3}}}-\frac{C}{T^{\frac{1}{3}}}e^{-sT^{\frac{1}{3}}}\Big) \mathbb{P}\big(C^{\mathrm{Ai}}_{\epsilon}\leq \phi(s), \mathbf{A}\big).  \label{eq:LowB1}
\end{equation}
To finish the proof, we observe that
\begin{equation}\label{eq:LowB2}
\mathbb{P}\big(C^{\mathrm{Ai}}_{\epsilon}\leq \phi(s),\mathbf{A}\big)\geq 1 - \mathbb{P}(C^{\mathrm{Ai}}_{\epsilon}\geq \phi(s)) - \mathbb{P}(\mathbf{A}^c)\geq 1- e^{-s^{\frac{3}{2}+\frac{4}{3}\epsilon}} - e^{-\frac{4}{3}(1-\epsilon)s^{\frac{3}{2}}}
\end{equation}
for all $s\geq s_0$. The second inequality above used $\mathbb{P}(\mathbf{A}^c)=\mathbb{P}(\mathbf{a}_1\geq (1-\zeta)s)\leq \exp(-\frac{4}{3}(1-\epsilon)s^{\frac{3}{2}})$ which holds when $s$ is sufficiently large (see \cite[Theorem~1.3]{RRV11}).
Plugging \eqref{eq:LowB2} into the r.h.s. of \eqref{eq:LowB1} and rearranging yields
$e^{-(1-\epsilon)\zeta s T^{\frac{1}{3}}}\leq 1- \exp\Big(-\frac{2}{3\pi}s^{\frac{9}{4}+2\epsilon}e^{-\zeta sT^{\frac{1}{3}}}-\frac{C}{T^{\frac{1}{3}}}e^{-sT^{\frac{1}{3}}}\Big)\leq e^{-(1+\epsilon)\zeta s T^{\frac{1}{3}}}$
 for sufficiently large $s$. Hence \eqref{eq:UpBound} follows.
\end{proof}

\begin{proof}[Proof of \eqref{eq:LowrBound}]
Here, we need to get an upper bound on $\mathbb{E}\big[\prod_{k=1}^{\infty} \mathcal{I}_s(\mathbf{a}_k)\big]$. We start by splitting $\mathbb{E}\big[\prod_{k=1}^{\infty} \mathcal{I}_s(\mathbf{a}_k)\big]$ into two different parts (again set $\mathbf{A}=\big\{\mathbf{a}_1 \leq (1+\zeta)s\big\}$):
\begin{align}\label{eq:LowTail1stStep}
\mathbb{E}\Big[\prod_{k=1}^{\infty} \mathcal{I}_s(\mathbf{a}_k)\Big]\leq \mathbb{E}\Big[\prod_{k=1}^{\infty} \mathcal{I}_s(\mathbf{a}_k)\mathbbm{1}(\mathbf{A})\Big] + \mathbb{P}(\mathbf{A}^c)\cdot \exp(- \zeta s T^{\frac{1}{3}}).
\end{align}
%
Let us define $\chi^{\mathrm{Ai}}(s):= \#\{\mathbf{a}_i\geq s\}$ and, for $c\in(0,\tfrac{2}{3\pi})$ fixed, define
\begin{align}
\mathbf{B}: = \Big\{ \chi^{\mathrm{Ai}}(-\zeta s)- \mathbb{E}\big[\chi^{\mathrm{Ai}}(-\zeta s)\big]\geq - c(\zeta s)^{\frac{3}{2}}\Big\}
\end{align}
We split the first term on the r.h.s. of \eqref{eq:LowTail1stStep} as follows
 \begin{align}
 \mathbb{E}\Big[\prod_{k=1}^{\infty} \mathcal{I}_s(\mathbf{a}_k)\mathbbm{1}(\mathbf{A})\Big] &\leq \mathbb{E}\Big[\prod_{k=1}^{\infty}\mathcal{I}_s(\mathbf{a}_k)\mathbbm{1}\big(\mathbf{B}\cap \mathbf{A}\big)\Big]+  \mathbb{E}\Big[\mathbbm{1}(\mathbf{B}^c \cap \mathbf{A})\Big]. \label{eq:SplitEq1}
\end{align}

 On the event $\mathbf{B}$, we may bound  $$\prod_{k=1}^{\infty}\mathcal{I}_s(\mathbf{a}_k) \mathbbm{1}(\mathbf{B})\leq \exp\Big(-\Big(\frac{2}{3\pi}-c\Big)(\zeta s)^{\frac{3}{2}}e^{-(1+\zeta)s T^{\frac{1}{3}}}\Big)$$ so that
\begin{align}\label{eq:2ndSplitBd}
\mathbb{E}\Big[\prod_{k=1}^{\infty}\mathcal{I}_s(\mathbf{a}_k)\mathbbm{1}\big(\mathbf{B}\cap\mathbf{A}\big)\Big]\leq \exp\Big(-\Big(\frac{2}{3\pi}-c\Big)(\zeta s)^{\frac{3}{2}}e^{-(1+\zeta)sT^{\frac{1}{3}}}\Big)\cdot\mathbb{P}(\mathbf{A}).
\end{align}
For large $s$, the r.h.s. of \eqref{eq:2ndSplitBd} is bounded above by $\exp\big(-e^{-(1+\zeta)sT^{\frac{1}{3}}}\big)\mathbb{P}(\mathbf{A})$. Thanks to Theorem~1.4 of \cite{CG18}, we know that for any $\delta >0$, there exists $s_{\delta}$ such that $\mathbb{P}(\mathbf{B}^c)\leq e^{- c(\zeta s)^{3-\delta}}$ for all $s\geq s_{\delta}$. Now, we plug these bounds into \eqref{eq:SplitEq1} which provides an upper bound to the first term on the r.h.s. of \eqref{eq:LowTail1stStep}. As a result, we find
\begin{align}
1-\mathbb{E}\big[\prod_{k=1}^{\infty}\mathcal{I}_s(\mathbf{a}_k)\big]&\geq 1- e^{-e^{-(1+\zeta)sT^{\frac{1}{3}}}} - e^{- c(\zeta s)^{3-\delta}} + \mathbb{P}(\mathbf{A}^c)\big( e^{- e^{-(1+\zeta)sT^{\frac{1}{3}}}}- e^{- \zeta s T^{\frac{1}{3}}}\big).\qquad  \label{eq:LowrFnStep}
\end{align}
Finally, we note that $\mathbb{P}(\mathbf{A}^c)\geq \exp\big(-\frac{4}{3}(1+\epsilon)s^{\frac{3}{2}}\big)$ (again thanks to \cite[Theorem~1.3]{RRV11}). Thus, the r.h.s. of \eqref{eq:LowrFnStep} is lower bounded by $\frac{1}{2}e^{-(1+\zeta)s T^{1/3}} + e^{-\frac{4}{3}(1+\epsilon)s^{3/2}}$ for sufficiently large $s$. This completes the proof of \eqref{eq:LowrBound} and hence also of Proposition~\ref{thm:MainTheorem}.
\end{proof}

\section{Upper tail under general initial data}\label{UpperTailGSEC}

 This section contains the proofs of Theorems~\ref{Main4Theorem} and \ref{Main6Theorem}.

\subsection{Proof of Theorem~\ref{Main4Theorem}}\label{Proof4Theorem}
Theorem~\ref{Main4Theorem} will follow directly from the next two propositions which leverage narrow wedge upper tail decay results to give general initial data results. The cost of this generalization is in terms of both the coefficients in the exponent and the ranges on which the inequalities are shown to hold. Recall $h^{f}_T$ and $\Upsilon_T$ from \eqref{eq:ScalCentHeight} and \eqref{eq:DefUpsilon} respectively.

  The following proposition has two parts which correspond to $T$ being greater or, less than equal to $\pi$. The main goal of this proposition is to provide a recipe to deduce upper bounds on $\mathbb{P}(h^{f}_T(0)>s)$ by employing the upper bounds on $\mathbb{P}(\Upsilon_T(0)>s)$. We have noticed in Theorem~\ref{GrandUpTheorem} that the latter bounds vary as $s$ lies in different intervals and furthermore, those intervals vary with $T$. This motivates us to choose a generic set of intervals of $s$ based on a given $T$ and assume upper bounds on $\mathbb{P}(\Upsilon_T(0)>s)$ in those intervals. In what follows, we show how those translate to the upper bounds on $\mathbb{P}(h^{f}_T(0)>s)$.

  \bp\label{SubstituteTheo}
  Fix $\epsilon,\mu\in (0,\frac{1}{2})$, $\nu \in (0,1)$, $C,\theta, \kappa, M>0$ and assume that $f$ belongs to $\mathbf{Hyp}(C,\nu, \theta, \kappa, M)$ (see Definition~\ref{Hypothesis}).
 \be
  \ii Fix $T_0>\pi$. 
   Suppose there exists $s_0= s_0(\epsilon, T_0)$ and for any $T\geq T_0$ there exist $s_1=s_1(\epsilon, T)$ and $s_2=s_2(\epsilon, T)$ with $s_1\leq s_2$ such for any $s\in [s_0,\infty)$,
  \begin{align}\label{eq:AssumBd}
  \mathbb{P}(\Upsilon_T(0)>s)\leq \begin{cases}
  e^{-\frac{4}{3}(1-\epsilon) s^{3/2}} & \text{ if }s\in [s_0, s_1]\cup (s_2, \infty),\\
  e^{-\frac{4}{3}\epsilon s^{3/2}} & \text{ if } s\in (s_1, s_2].
  \end{cases}
 \end{align}
 Let 
 \begin{align}\label{eq:mathbfs}
 \mathbf{s}_0 := \frac{s_0}{1-\frac{2\mu}{3}}, \qquad \mathbf{s}_1 := \frac{\epsilon s_1}{1-\frac{2\mu}{3}}, \qquad \mathbf{s}_2:= \frac{s_2}{1-\frac{2\mu}{3}}.
 \end{align}
  Then, there exists  $s^{\prime}_0= s^{\prime}_0(\epsilon,\mu, C,\nu, \theta, \kappa, M, T_0)$ such that for any $T>T_0$ and any $s\in [\max\{s^{\prime}_0, \mathbf{s}_0\},\infty)$, we have
 \begin{align}\label{eq:ResultBd}
 \mathbb{P}\big(h^{f}_T(0)>s\big)\leq
  \begin{cases}
 e^{- \frac{\sqrt{2}}{3}(1-\epsilon)(1-\mu)s^{3/2}} & \text{ if }s\in [\mathbf{s}_0,\mathbf{s}_1]\cup (\mathbf{s}_2, \infty),\\
 e^{- \frac{\sqrt{2}}{3}\epsilon(1-\mu)s^{3/2}} & \text{ if }s\in (\mathbf{s}_1, \mathbf{s}_2],
 \end{cases}
 \end{align}

 \ii Fix $T_0\in (0,\pi)$. Then, there exists $s^{\prime}_0= s^{\prime}_0(C, \nu, \theta, \kappa, M,T_0)$ satisfying the following: if there exist $s_0=s_0(T_0)>0$ and $c=c(T_0)>0$ such that $\mathbb{P}(\Upsilon_T(0)>s)\leq e^{-cs^{3/2}}$ for all $s\in [s_0,\infty)$ and $T\in [T_0, \pi]$, then,
 \begin{align}\label{eq:GenRoughUpBd}
 \mathbb{P}\big(h^{f}_T(0)>s\big)\leq e^{-\frac{1}{2\sqrt{2}}cs^{3/2}}, \quad \forall s\in  [\max\{s^{\prime}_0,s_0\}, \infty), T\in (T_0,\pi].
\end{align}
\ee
  \ep


 The next proposition provides a lower bound on $\mathbb{P}\big(h^{f}_{T}(0)> s\big)$ in terms of the upper tail probability of the narrow wedge solution. 
 \bp\label{UpTailLowBd}
Fix $\mu\in (0,\frac{1}{2})$,  $n\in \ZZ_{\geq 3}$, $\nu \in (0,1)$, $C,\theta, \kappa, M>0$ and  $T_0>\pi$  and assume that $f\in \mathbf{Hyp}(C,\nu, \theta, \kappa, M)$. Then, there exist $s_0 = s_0(\mu, n, T_0, C, \nu , \theta, \kappa, M)$ and $K=K(\mu)>0$ such that for all $s\geq s_0$ and $T\geq T_0$,
\begin{align}\label{eq:UpTailLowBd}
 \mathbb{P}\big(h^{f}_{T}(0)> s\big) \geq \Big(\mathbb{P}\big(\Upsilon_T(0)>\big(1+\tfrac{2\mu}{3}\big)s\big)\Big)^2 - e^{-Ks^n}.
\end{align}
\ep


We prove Propositions~\ref{SubstituteTheo} and~\ref{UpTailLowBd} in Sections~\ref{UpTailUpBdSEC} and \ref{UpTailLowBdSEC} respectively. In what follows, we complete the proof of Theorem~\ref{Main4Theorem} assuming Propositions~\ref{SubstituteTheo} and~\ref{UpTailLowBd}.

\begin{proof}[Proof of Theorem~\ref{Main4Theorem}]
By Theorem~\ref{GrandUpTheorem}, for any $\epsilon\in (0, \frac{1}{2})$ and $T_0>\pi$, there exists $s_0=s_0(\epsilon, T_0)$ such that for all $T>T_0$ and $s\in [s_0, \infty)$
\begin{align}\label{eq:Hypo}
\mathbb{P}(\Upsilon_T>s)\leq \begin{cases}
e^{-\frac{4}{3}(1-\epsilon) s^{3/2}} & \text{ if }s\in [s_0, \frac{1}{8}\epsilon^2 T]\cup (\frac{9}{16}\epsilon^{-2} T, \infty),\\
  e^{-\frac{4}{3}\epsilon s^{3/2}} & \text{ if } s\in (\frac{1}{8}\epsilon^2 T, \frac{9}{16}\epsilon^{-2} T].
\end{cases}
\end{align}
  For any $\epsilon\in (0,\frac{1}{2})$ and $T>T_0$, \eqref{eq:Hypo} shows that the hypothesis of part (1) of Proposition~\ref{SubstituteTheo} is satisfied with $s_1= \frac{1}{8}\epsilon^2 T$ and $s_2= \frac{9}{16}\epsilon^{-2} T$. Proposition~\ref{SubstituteTheo} yields $s^{\prime}_0=s^{\prime}_0(\epsilon,\mu, T_0, C,\nu, \theta, \kappa, M)$ such that for all $T\geq T_0$ and $s\in [\max\{s^{\prime}_0, s_0/(1-\tfrac{2\mu}{3})\}, \infty)$
  \begin{align}\label{eq:FinalUpTailUpBd}
  \mathbb{P}\big(h^{f}_T(0)>s\big)\leq
  \begin{cases}
 e^{- \frac{\sqrt{2}}{3}(1-\epsilon)(1-\mu)s^{3/2}} & \text{ if }s\in \Big[\frac{s_0}{1-\frac{2\mu}{3}},\frac{\epsilon^3 T}{8(1-\frac{2\mu}{3})} \Big]\cup \Big(\frac{9\epsilon^{-2}T}{16(1-\frac{2\mu}{3})} , \infty\Big),\\
 e^{- \frac{\sqrt{2}}{3}\epsilon(1-\mu)s^{3/2}} & \text{ if }s\in \Big(\frac{\epsilon^3 T}{8(1-\frac{2\mu}{3})}, \frac{9\epsilon^{-2}T}{16(1-\frac{2\mu}{3})} \Big].
 \end{cases}
  \end{align}
  This shows the upper bound on $\mathbb{P}\big(h^{f}_T(0)>s\big)$ when $T_0>\pi$. For any $T_0\in (0, \pi)$, the upper bound on $\mathbb{P}\big(h^{f}_T(0)>s\big)$ follows from \eqref{eq:GenRoughUpBd} for all $T\in [T_0, \pi]$. 
  
  Now, we turn to show the lower bound.
 Let us fix $n=3$. Owing to Proposition~\ref{UpTailLowBd} and the lower bound on the probability $\mathbb{P}(\Upsilon_T(0)\geq s)$ in \eqref{eq:RoughBd1} of Theorem~\ref{GrandUpTheorem}, we observe that the second term $e^{-Ks^3}$ of the r.h.s. of \eqref{eq:UpTailLowBd} is less than the half of the first term when $s$ is large enough. Hence, there exist $s^{\prime}_{0}=s^{\prime}_{0}(\epsilon,\mu, C,\nu, \theta, \kappa, M, T_0)$ such that for all $T\geq  T_0>\pi$ and $s\in [\max\{s^{\prime}_0,s_0/(1+\tfrac{2\mu}{3})\}, \infty)$
\begin{align}\label{eq:FinalUpTailLowBd}
\mathbb{P}\big(h^{f}_T(0)>s\big)\geq
\begin{cases}
\frac{1}{2}e^{-\frac{8}{3}(1+\epsilon)(1+\mu) s^{3/2}} & \text{ if } s\in \Big[\frac{s_0}{1+\frac{2\mu}{3}}, \frac{\epsilon^2 T}{8(1+\frac{2\mu}{3})}\Big],\\
\frac{1}{2}e^{-2^{\frac{9}{2}}\epsilon^{-3}(1+\mu) s^{3/2}} & \text{ if } s\in \Big(\frac{\epsilon^2 T}{8(1+\frac{2\mu}{3})}, \frac{9\epsilon^{-2}T}{16(1+\frac{2\mu}{3})}\Big],\\
\frac{1}{2}e^{-8\sqrt{3}(1+\epsilon)(1+\mu) s^{3/2}} & \text{ if } s\in \Big(\frac{9\epsilon^{-2}T}{16(1+\frac{2\mu}{3})}, \infty\Big).\\
\end{cases}
\end{align}
The sets of three intervals of \eqref{eq:FinalUpTailUpBd} and \eqref{eq:FinalUpTailLowBd} are not same. Note\footnote{The first inequality uses $\epsilon\leq (1-\tfrac{2\mu}{3})(1+\tfrac{2\mu}{3})^{-1}$ for any $\epsilon, \mu\in (0, \frac{1}{2})$ and the second inequality uses $\mu>0$.} that
$\tfrac{\epsilon^3 T}{8(1-\frac{2\mu}{3})}< \tfrac{\epsilon^2T}{8(1+\frac{2\mu}{3})}$ and $\tfrac{9 \epsilon^{-2}T}{16(1-\frac{2\mu}{3})} > \tfrac{9 \epsilon^{-2}T}{16(1+\frac{2\mu}{3})}$.
From this we see that
$
 \Big(\tfrac{\epsilon^2 T}{8(1+\frac{2\mu}{3})}, \tfrac{9\epsilon^{-2}T}{16(1+\frac{2\mu}{3})}\Big] \subset \Big(\tfrac{\epsilon^3 T}{8(1-\frac{2\mu}{3})}, \tfrac{9\epsilon^{-2}T}{16(1-\frac{2\mu}{3})} \Big]$,
$ \Big[\tfrac{s_0}{1-\frac{2\mu}{3}},\tfrac{\epsilon^3 T}{8(1-\frac{2\mu}{3})} \Big]\subset \Big[\frac{s_0}{1+\frac{2\mu}{3}}, \tfrac{\epsilon^2 T}{8(1+\frac{2\mu}{3})}\Big],$ and  $\Big(\tfrac{9\epsilon^{-2}T}{16(1-\frac{2\mu}{3})} , \infty\Big)\subset \Big(\tfrac{9\epsilon^{-2}T}{16(1+\frac{2\mu}{3})}, \infty\Big)$.

By these containments and  \eqref{eq:FinalUpTailUpBd}-\eqref{eq:FinalUpTailLowBd}, for all $s\in \big[\max\big\{s^{\prime}_0,s_0/(1-\tfrac{2\mu}{3}), s_0/(1+\tfrac{2\mu}{3}) \big\}, \infty\big)$ and $T\geq T_0>\pi$, we have $\exp(-c_1s^{\frac{3}{2}})\leq \mathbb{P}\big(h^{f}_T(0)>s\big)\leq \exp(-c_2s^{\frac{3}{2}})$ where
 \begin{align}
 \left.
 \begin{array}{r}
  \frac{\sqrt{2}}{3}(1-\mu)(1-\epsilon)\\
  \frac{\sqrt{2}}{3}(1-\mu)\epsilon\\
  \frac{\sqrt{2}}{3}(1-\mu)(1-\epsilon)
\end{array}
 \right\}\leq c_2<c_1\leq
\begin{cases}
\frac{8}{3}(1+\epsilon)(1+\mu) & \text{ if } s\in \Big[\frac{s_0}{1-\frac{2\mu}{3}}, \frac{\epsilon^3 T}{8(1-\frac{2\mu}{3})}\Big],\\
2^{\frac{9}{2}}\epsilon^{-3}(1+\mu) & \text{ if } s\in \Big(\frac{\epsilon^3 T}{8(1-\frac{2\mu}{3})}, \frac{9\epsilon^{-2}T}{16(1-\frac{2\mu}{3})}\Big],\\
8\sqrt{3}(1+\epsilon)(1+\mu) & \text{ if } s\in \Big(\frac{9\epsilon^{-2}T}{16(1-\frac{2\mu}{3})}, \infty\Big).\\
 \end{cases}
 \end{align}
 The lower bound $\mathbb{P}(h^{f}_{T}(0)>s)\geq e^{-2c_1s^{3/2}}$ for all $T \in [T_0,\pi]$ when $T_0 \in (0,\pi)$ follows by combining the first inequality of \eqref{eq:GenBdRough} with \eqref{eq:UpTailLowBd} (with $n=3$). This completes the proof.
\end{proof}

   \subsubsection{Proof of Proposition~\ref{SubstituteTheo}}\label{UpTailUpBdSEC}
Recall  $h^{f}_T$ and $\Upsilon_T$ from \eqref{eq:ScalCentHeight} and \eqref{eq:DefUpsilon}. By Proposition~\ref{NotMainTheorem}, $\mathbb{P}(h^{f}_T(0)\geq s)= \mathbb{P}(\widetilde{\mathcal{A}}^{f})$ where
 \begin{align}
  \widetilde{\mathcal{A}}^{f}:= \Big\{\int^{\infty}_{-\infty} e^{T^{\frac{1}{3}}\Big(\Upsilon_T(y)+ f(-y)dy\Big)} dy\geq e^{T^{\frac{1}{3}}s}\Big\}.
\end{align}
Let $\zeta_n:=\frac{n}{s^{1+\delta}}$, $n\in \ZZ$ and fix $\tau\in (0,1)$ such that $\nu+\tau<1$. We define the following events:
 \begin{align}
 \widetilde{E}_n&:= \Big\{\Upsilon_T(\zeta_n)\geq -\frac{1-2^{-1}\tau}{2^{2/3}}\zeta^2_n + \big(1-\tfrac{2\mu}{3}\big)s\Big\}\label{eq:tildeE}\\
 \widetilde{F}_n&:= \Big\{\Upsilon_T(y)\geq -\frac{1- \tau}{2^{2/3}} y^2 + \big(1-\frac{\mu}{3}\big)s \quad \text{ for some }y \in [\zeta_n,\zeta_{n+1}]\Big\}.\label{eq:tildeF}
\end{align}
In the same way as in \eqref{eq:BasicStep}, we write
\begin{align}\label{eq:BasicStep3}
\mathbb{P}\big(\widetilde{\mathcal{A}}^{f}\big)\leq \sum_{n\in \ZZ} \mathbb{P}(\widetilde{E}_n) +\mathbb{P}\Big(\widetilde{\mathcal{A}}^{f}\cap \big(\bigcup_{n\in \ZZ} \widetilde{E}_n\big)^c\Big).
\end{align}

 From now on, we will fix some $T>T_0>\pi$ and assume that there exist $s_0=s_0(\epsilon, T_0)$, $s_1=s_1(\epsilon, T)$ and $s_2=s_2(\epsilon, T)$ with $s_1\leq s_2$ such that for all $s\in [s_0,\infty)$ \eqref{eq:AssumBd} is satisfied. In the next result, we demonstrate some upper bound on the first term on the r.h.s. of \eqref{eq:BasicStep3}.

 \bl\label{UpSumProbBd}
  There exist $\bar{s}=\bar{s}(\epsilon, T_0)$ and $\Theta=\Theta(\epsilon, T_0)$ such that for all $s\in [\max\{\bar{s},\mathbf{s}_0\}, \infty)$,
  \begin{align}\label{eq:TotSumBd}
  \sum_{n\in \ZZ}\mathbb{P}\big(\widetilde{E}_n\big)\leq \begin{cases}
  \Theta e^{-\frac{4}{3}(1-\epsilon)(1-\mu) s^{3/2}} & \text{ if } s\in [\mathbf{s}_0, \mathbf{s}_1]\cup (\mathbf{s}_2, \infty),\\
 \Theta e^{-\frac{4}{3}\epsilon(1-\mu) s^{3/2}} & \text{ if } s\in (\mathbf{s}_1,\mathbf{s}_2],
  \end{cases}
  \end{align}
where $\mathbf{s}_0$, $\mathbf{s}_1$ and $\mathbf{s}_2$ are defined in \eqref{eq:mathbfs}.
 \el
\begin{proof}
We first prove \eqref{eq:TotSumBd} when $s\in [\mathbf{s}_0,\mathbf{s}_1]$. If $[\mathbf{s}_0,\mathbf{s}_1]$ is an empty interval, then, nothing to prove. Otherwise, fix any $s\in [\mathbf{s}_0,\mathbf{s}_1]$. Let us denote $$\mathcal{S}_1:= [0,(1-\epsilon)s_1], \quad \mathcal{S}_2:= ((1-\epsilon)s_1, s_2-s_0], \quad \mathcal{S}_3:= (s_2-s_0, \infty).$$

\begin{claim}
\begin{align}\label{eq:IndEventBd}
\mathbb{P}\big(\widetilde{E}_n\big)\leq \begin{cases}
\exp\Big(-\frac{4}{3}(1-\epsilon)\Big(\big(1-\tfrac{2\mu}{3}\big)s+\frac{\tau \zeta^2_n}{2^{5/3}}\Big)^{\frac{3}{2}}\Big) & \text{ when }\frac{\tau \zeta_n^2}{2^{5/3}}\in \mathcal{S}_1\cup \mathcal{S}_3,\\
\exp\Big(-\frac{4}{3}\epsilon\Big(\big(1-\tfrac{2\mu}{3}\big)s+\frac{\tau \zeta^2_n}{2^{5/3}}\Big)^{\frac{3}{2}}\Big) & \text{ when }\frac{\tau \zeta_n^2}{2^{5/3}} \in \mathcal{S}_2.
\end{cases}
\end{align}
\end{claim}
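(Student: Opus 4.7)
The plan is to reduce $\mathbb{P}(\widetilde{E}_n)$ to a one-point upper-tail probability of $\Upsilon_T(0)$ via stationarity, and then identify which branch of the hypothesis \eqref{eq:AssumBd} the resulting threshold triggers.

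First I would rewrite the event $\widetilde{E}_n$ in \eqref{eq:tildeE} using the algebraic identity $-\frac{1-\tau/2}{2^{2/3}}\zeta_n^2 = -\frac{\zeta_n^2}{2^{2/3}} + \frac{\tau \zeta_n^2}{2^{5/3}}$, and invoke Proposition~\ref{StationarityProp}, which asserts that the one-point law of $\Upsilon_T(y) + \frac{y^2}{2^{2/3}}$ is independent of $y$. This gives
\begin{equation}
\mathbb{P}(\widetilde{E}_n) = \mathbb{P}\bigl(\Upsilon_T(0) \geq \theta_n\bigr), \qquad \theta_n := \bigl(1-\tfrac{2\mu}{3}\bigr)s + \frac{\tau \zeta_n^2}{2^{5/3}}.
\end{equation}
Since we are working in the regime $s \in [\mathbf{s}_0, \mathbf{s}_1]$ of Lemma~\ref{UpSumProbBd}, the definitions \eqref{eq:mathbfs} translate to $(1-\tfrac{2\mu}{3})s \in [s_0, \epsilon s_1]$. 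It then remains to verify, in each of the three cases for the location of $\tau \zeta_n^2/2^{5/3}$, that the correct choice of branch of \eqref{eq:AssumBd} produces the claimed exponent.

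In the case $\tau \zeta_n^2/2^{5/3} \in \mathcal{S}_1 = [0, (1-\epsilon)s_1]$, the two bounds add to give $\theta_n \in [s_0, \epsilon s_1 + (1-\epsilon)s_1] = [s_0, s_1]$, so the first branch of \eqref{eq:AssumBd} applies with threshold $\theta_n$ and yields the bound $e^{-\frac{4}{3}(1-\epsilon)\theta_n^{3/2}}$. In the case $\tau \zeta_n^2/2^{5/3} \in \mathcal{S}_3 = (s_2 - s_0, \infty)$, one has $\theta_n > s_0 + (s_2 - s_0) = s_2$, so $\theta_n \in (s_2, \infty)$ again triggers the first branch of \eqref{eq:AssumBd} and gives the same bound. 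Finally, in the case $\tau \zeta_n^2/2^{5/3} \in \mathcal{S}_2 = ((1-\epsilon)s_1, s_2 - s_0]$, the threshold $\theta_n$ cannot in general be localized to a single subinterval of $[s_0, \infty)$, since its range $(s_0 + (1-\epsilon)s_1,\, \epsilon s_1 + s_2 - s_0]$ may straddle $s_1$ or $s_2$. However, for $\epsilon \in (0, \tfrac{1}{2})$ one has $\epsilon < 1-\epsilon$, so both branches of \eqref{eq:AssumBd} are dominated by $e^{-\frac{4}{3}\epsilon \theta_n^{3/2}}$, which is precisely the weaker bound claimed in the $\mathcal{S}_2$ case.

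The content of the argument is essentially bookkeeping, and the only real subtlety is the $\mathcal{S}_2$ case. There, rather than trying to pin down $\theta_n$ to one subinterval, I would exploit the monotonicity $\epsilon < 1-\epsilon$ to absorb both branches of \eqref{eq:AssumBd} into a single weaker exponent. I do not anticipate any substantial analytic obstacle beyond verifying these interval arithmetic relations.
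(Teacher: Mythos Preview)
Your proposal is correct and follows essentially the same approach as the paper: reduce $\mathbb{P}(\widetilde{E}_n)$ to a one-point upper-tail probability via Proposition~\ref{StationarityProp}, then use the interval arithmetic $(1-\tfrac{2\mu}{3})s \in [s_0,\epsilon s_1]$ to locate $\theta_n$ relative to $s_1$ and $s_2$ in the $\mathcal{S}_1$ and $\mathcal{S}_3$ cases, and for $\mathcal{S}_2$ invoke $\epsilon < 1-\epsilon$ to absorb both branches of \eqref{eq:AssumBd} into the weaker bound. The paper's proof leaves the stationarity step implicit, but the logic is identical.
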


\begin{proof}
Note that $s_0\leq (1-\frac{2\mu}{3})s\leq \epsilon s_1$. This implies $\big(1-\frac{2\mu}{3}\big)s + 2^{-5/3}\tau \zeta^2_n $ is bounded above by $ \epsilon s_1+(1-\epsilon)s_1= s_1$ whenever $2^{-5/3}\tau \zeta^2_n\leq (1-\epsilon)s_1$ whereas it is bounded below by $s_0 + s_2-s_0 =s_2$ if $2^{-5/3}\tau \zeta^2_n>s_2-s_0$. Owing to this and \eqref{eq:AssumBd}, we have
\begin{align}\label{eq:Case1}
\mathbb{P}(\widetilde{E}_n)\leq \exp\Big(-\frac{4}{3}(1-\epsilon)\Big(\big(1-\tfrac{2\mu}{3}\big)s+\frac{\tau \zeta^2_n}{2^{5/3}}\Big)^{\frac{3}{2}}\Big) \quad \text{ when }\frac{\tau \zeta_n^2}{2^{5/3}}\in \mathcal{S}_1\cup \mathcal{S}_3.
\end{align}
Furthermore, $(1-\frac{2\mu}{3})s+2^{-5/3}\tau \zeta^2_n$ is greater than $s_0$ when $s\geq \mathbf{s}_0$. Thanks to $\epsilon<\frac{1}{2}$, one can now see the following from \eqref{eq:AssumBd}:
\begin{align}\label{eq:Case2}
\mathbb{P}(\widetilde{E}_n)\leq \exp\Big(-\frac{4}{3}\epsilon\Big(\big(1-\tfrac{2\mu}{3}\big)s+\frac{\tau \zeta^2_n}{2^{5/3}}\Big)^{\frac{3}{2}}\Big) \quad \text{ when } \frac{\tau \zeta_n^2}{2^{5/3}}\in \mathcal{S}_2.
\end{align}
Combining \eqref{eq:Case1} and \eqref{eq:Case2}, we get \eqref{eq:IndEventBd}.
\end{proof}


Let $n_0=n_0(s,\delta, \tau)<n^{\prime}_0 = n^{\prime}_0(s,\delta, \tau)\in \NN$ be such that $2^{-5/3}\tau \zeta^2_n\in \mathcal{S}_2$ for all integer $n$ in $[n_0, n^{\prime}_0]\cup [-n^{\prime}_0, -n_0]$. Using the reverse Minkowski's inequality,
\begin{align}\label{eq:BreakSquare}
\frac{\tau \zeta^2_n}{2^{5/3}}\geq \frac{\tau \zeta^2_{n_0}}{2^{5/3}} + \frac{\tau \zeta^2_{|n|-n_0}}{2^{5/3}}, \quad \forall n\in \{[n_0, n^{\prime}_0]\cup [- n^{\prime}_0, -n_0]\}\cap \ZZ.
\end{align}
 Owing to $s_1\geq \epsilon^{-1}(1-\tfrac{2\mu}{3}) s$, we get
\begin{align}\label{eq:LeftBound}
\frac{\tau \zeta^2_{n_0}}{2^{5/3}}\geq (1-\epsilon)s_1\geq \epsilon^{-1}\big(1-\tfrac{2\mu}{3}\big)(1-\epsilon)s.
\end{align}
Combining \eqref{eq:BreakSquare} with \eqref{eq:LeftBound} and invoking the reverse Minkowski's inequality  yields
\begin{equation}
\Big((1-\tfrac{2\mu}{3})s+\frac{\tau \zeta^2_{n}}{2^{5/3}}\Big)^{\frac{3}{2}} \geq \Big(\epsilon^{-1}\big(1-\tfrac{2\mu}{3}\big)(1-\epsilon)s\Big)^{\frac{3}{2}} + \frac{\tau^{3/2}\zeta^3_{|n|-n_0}}{2^{5/2}}, \quad \text{when}\quad\frac{\tau \zeta^2_{n}}{2^{5/3}}\in \mathcal{S}_2.
\end{equation}
Plugging this into \eqref{eq:IndEventBd}, summing in a similar way as in the proof of Lemma~\ref{MeshBound} and noticing
$$\epsilon\Big(\epsilon^{-1}(1-\frac{2\mu}{3})(1-\epsilon)\Big)^{\frac{3}{2}}> \epsilon^{-\frac{1}{2}}(1-\epsilon)^{\frac{1}{2}}(1-\mu)(1-\epsilon)>(1-\mu)(1-\epsilon),$$
 we arrive at
\begin{align}
\sum_{n: 2^{-5/3}\tau \zeta^2_n\in \mathcal{S}_2}\mathbb{P}\big(\widetilde{E}_n\big)\leq C_1 \exp\Big(-\frac{4}{3}(1-\epsilon)(1-\mu)s^{\frac{3}{2}}\Big)\label{eq:MidSum}
\end{align}
for some $C_1=C_1(\epsilon, T_0)$ when $s$ is large enough.
From the reverse Minkowski's inequality,
 \begin{align}\label{eq:RevMin}
 \Big(\big(1-\tfrac{2\mu}{3}\big)s+\frac{\tau \zeta^2_n}{2^{5/3}}\Big)^{\frac{3}{2}}\geq \big(1-\tfrac{2\mu}{3}\big)^{\frac{3}{2}}s^{\frac{3}{2}} + \frac{\tau^{3/2} \zeta^3_{|n|}}{2^{5/2}}.
\end{align}
 Applying \eqref{eq:RevMin} to the r.h.s. of \eqref{eq:IndEventBd} for all $n$ such that $2^{-5/3}\tau \zeta^2_n\in \mathcal{S}_1\cup \mathcal{S}_3$ and summing in a similar way as in the proof of Lemma~\ref{MeshBound} yields
\begin{align}
\sum_{n:2^{-5/3}\tau \zeta^2_n\in \mathcal{S}_1\cup \mathcal{S}_3}\mathbb{P}\big(\widetilde{E}_n\big)\leq C_2\exp\Big(-\frac{4}{3}(1-\epsilon)\big(1-\tfrac{2\mu}{3}\big)^{3/2}s^{\frac{3}{2}}\Big)  \label{eq:EndSum}
\end{align}
for some $C_2=C_2(\epsilon, T_0)$.
Adding \eqref{eq:MidSum} and \eqref{eq:EndSum} and noticing that $\big(1-\tfrac{2\mu}{3}\big)^{\frac{3}{2}}\geq (1-\mu)$, we obtain \eqref{eq:TotSumBd} if $s\in [\mathbf{s}_0,\mathbf{s}_1]\cap [\bar{s}, \infty)$ where $\bar{s}$ depends on $\epsilon$ and $T_0$.

Now, we turn to the case when $s\in \big\{(\mathbf{s}_1, \mathbf{s}_2]\cup (\mathbf{s}_2, \infty)\big\}\cap[\mathbf{s}_0,\infty)$. Owing to \eqref{eq:AssumBd}, for all $n\in \ZZ$ and $s\in [\mathbf{s}_0,\infty)$,
\begin{align}
\mathbb{P}(\widetilde{E}_n)\leq \begin{cases}
\exp\Big(-\frac{4}{3}\epsilon\Big(\big(1-\tfrac{2\mu}{3}\big)s+\frac{\tau \zeta^2_n}{2^{5/3}}\Big)^{\frac{3}{2}}\Big) & \text{ if } s\in (\mathbf{s}_1, \mathbf{s}_2],\\\exp\Big(-\frac{4}{3}(1-\epsilon)\Big(\big(1-\tfrac{2\mu}{3}\big)s+\frac{\tau \zeta^2_n}{2^{5/3}}\Big)^{\frac{3}{2}}\Big) & \text{ if }s\in  (\mathbf{s}_2, \infty).
\end{cases}\label{eq:IndEventBd2}
\end{align}
  Applying \eqref{eq:RevMin} and summing the r.h.s. of \eqref{eq:IndEventBd2} in the same way as  \eqref{eq:EndSum}, we find \eqref{eq:TotSumBd}.
\end{proof}

 Now, we show an analogue of Lemma~\ref{EffOfInitCond}.

 \bl\label{SumProbBdLem}
There exists $s^{\prime} = s^{\prime}(\epsilon,  T_0, C, \nu, \theta, \kappa, M)$ such that for all $s\geq s^{\prime}$,
\begin{align}\label{eq:Containment3}
\Big(\bigcup_{n\in \ZZ}\widetilde{E}_n\Big)^c \cap \Big(\bigcup_{n\in \ZZ} \widetilde{F}_n\Big)^c \subseteq (\widetilde{\mathcal{A}}^f)^c.
\end{align}
 \el
\begin{proof}
Assume the event of the l.h.s. of \eqref{eq:Containment3} occurs. By \eqref{eq:InMomBd} of Definition~\ref{Hypothesis} and $\tau+\nu<1$,
\begin{align}
\int_{-\infty}^{\infty} e^{T^{1/3}\big(\Upsilon_T(y)+f(-y)\big)} dy\leq \int_{-\infty}^{\infty} e^{T^{1/3}\big(C-\frac{1-\tau}{2^{2/3}}y^2+ (1-\frac{\mu}{3})s + \frac{\nu}{2^{2/3}} y^2\big)} dy\leq  \frac{K}{T^{1/6}} e^{(1-\frac{\mu}{3})sT^{1/3} }.
\end{align}
for some $K=K(C,T,\tau,\nu)>0$. There exists $s^{\prime}= s^{\prime}(\mu, T_0,  C,\nu, \theta, \kappa, M)$ such that the right hand side of the above inequality is bounded above by $\exp(sT^{\frac{1}{3}})$, thus confirming \eqref{eq:Containment3}.
\end{proof}

 Applying \eqref{eq:Containment3} and Bonferroni's union bound, (see \eqref{eq:BonfrBd} for a similar inequality)
\begin{align}\label{eq:MyNewEq}
\mathbb{P}\Big(\widetilde{\mathcal{A}}^{f} \cap \Big(\bigcup_{n\in \ZZ} \widetilde{E}_n\Big)^c\Big)\leq \sum_{n\in \ZZ} \mathbb{P}\big( \widetilde{E}^c_{n-1}\cap \widetilde{E}^c_{n+1}\cap \widetilde{F}_n\big).
\end{align}

\bl\label{BigMaxApp}
 There exists $s^{\prime\prime}= s^{\prime\prime}(\epsilon,\mu, T_0)$ and $\Theta=\Theta(\epsilon, T_0)$ such that for all $s\in [\max\{s^{\prime\prime}, \mathbf{s}_0\}, \infty)$,
\begin{align}\label{eq:NoBigMaxRes}
\sum_{n\in \ZZ} \mathbb{P}\big(\widetilde{E}^c_{n-1}\cap \widetilde{E}^c_{n+1}\cap \widetilde{F}_n\big) \leq \begin{cases}
\Theta e^{-\frac{\sqrt{2}}{3}(1-\epsilon)(1-\mu) s^{3/2}} & \text{ if } s\in [\mathbf{s}_0, \mathbf{s}_1]\cup (\mathbf{s}_2, \infty),\\
 \Theta e^{-\frac{\sqrt{2}}{3}\epsilon(1-\mu) s^{3/2}} & \text{ if } s\in (\mathbf{s}_1,\mathbf{s}_2].
 \end{cases}
\end{align}
See \eqref{eq:mathbfs} for the definitions of $\mathbf{s}_0$, $\mathbf{s}_1$ and $\mathbf{s}_2$.
\el

 \begin{proof}
  We need to bound $\mathbb{P}\big(\widetilde{E}^c_{n-1}\cap \widetilde{E}^c_{n+1}\cap \widetilde{F}_n\big)$ for all $n\in \ZZ$. Define
 \begin{align}\label{eq:TwoEvents}
 \widetilde{\mathcal{E}}_n := \Big\{ \Upsilon_T(\zeta_n) \geq -\frac{1+2^{-1}\tau}{2^{2/3}}\zeta^2_n- s^{\frac{2}{3}}\Big\}, \qquad \textrm{for } n\in \ZZ.
\end{align}
    We begin with the following inequality
\begin{align*}
\mathbb{P}\big(\widetilde{E}^c_{n-1} \cap \widetilde{E}^c_{n+1} \cap \widetilde{F}_n\big)\leq \mathbb{P} \big((\widetilde{E}^c_{n-1}\cap \widetilde{\mathcal{E}}_{n-1}) \cap (\widetilde{E}^c_{n+1}\cap \widetilde{\mathcal{E}}_{n+1}) \cap \widetilde{F}_n\big) + \mathbb{P}( \widetilde{\mathcal{E}}^c_{n-1})+ \mathbb{P}(\widetilde{\mathcal{E}}^c_{n+1}).
\end{align*}
We will bound each term on the r.h.s. above.
    Proposition~\ref{NotMainTheorem} provides $s^{\prime\prime}:=s^{\prime\prime}(\epsilon, T_0)$, $K=K(\epsilon, T_0)>0$ and the following upper bound\footnote{Taking $\epsilon=\delta$ in Proposition~\ref{NotMainTheorem} the r.h.s. of \eqref{eq:PrevRes1} $\leq \exp(-T^{\frac{1}{3}}\frac{4(1-\epsilon)s^{5/2}}{15\pi}) + \exp(-Ks^{3-\epsilon})$.} for $s\geq s^{\prime\prime}$ and $T\geq T_0$
   \[\mathbb{P}(\widetilde{\mathcal{E}}^c_n)\leq \exp\Big(- T^{\frac{1}{3}}\frac{4}{15\pi}(1-\epsilon)\big(s^{\frac{2}{3}} +\frac{\tau \zeta^2_n}{2^{5/3}}\big)^{\frac{5}{2}}\Big)+ \exp\Big(-K\big(s^{\frac{2}{3}} +\frac{\tau \zeta^2_n}{2^{5/3}}\big)^{3-\epsilon}\Big).\]
   Summing over all $n\in \ZZ$ (in the same way as in Lemma~\ref{MeshBound}) yields
    \begin{align}\label{eq:SumRes}
    \sum_{n\in \ZZ}  \big(\mathbb{P}( \widetilde{\mathcal{E}}^c_{n-1})+ \mathbb{P}(\widetilde{\mathcal{E}}^c_{n+1})\big) \leq e^{-T^{1/3} \frac{4}{15\pi}(1-\epsilon)s^{5/3}} + e^{-Ks^{2-2\epsilon/3}}.
    \end{align}

\begin{claim}
There exists $s^{\prime\prime} = s^{\prime\prime}(\epsilon, \mu, T_0)$, such that for all $s\geq s^{\prime\prime}$, $T\geq T_0$ and $n\in \ZZ$,
    \begin{align}\label{eq:CplexBd}
   \mathbb{P} \big( (\widetilde{E}^c_{n-1}\cap \widetilde{\mathcal{E}}_{n-1}) \cap (\widetilde{E}^c_{n+1}\cap \widetilde{\mathcal{E}}_{n+1}) \cap \widetilde{F}_n\big)\leq 2\mathbb{P}\Big(\Upsilon_T(0)\geq 2^{-\frac{11}{3}}\zeta^2_n+\frac{1}{2}\big(1-\tfrac{2\mu}{3}\big)s\Big).
    \end{align}
\end{claim}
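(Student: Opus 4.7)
My plan is to extend the Brownian-Gibbs argument of Lemma~\ref{LineEnsmbUse} to handle the upper-tail event $\widetilde{F}_n$, which is monotone increasing in the top curve $\Upsilon^{(1)}_T$. Applying the $\mathbf{H}_{2T}$-Brownian Gibbs property on the interval $(\zeta_{n-1}, \zeta_{n+1})$ (which contains $[\zeta_n, \zeta_{n+1}]$) and the monotone coupling of Proposition~\ref{Coupling1}, the conditional probability of $\widetilde{F}_n$ given $\mathcal{F}_{\mathrm{ext}}$ is dominated by its probability under a free Brownian bridge with the same endpoints $2^{-1/3}\Upsilon^{(1)}_T(\zeta_{n\pm 1})$. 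Since $\widetilde{F}_n$ is also monotone increasing in those endpoints, on $\widetilde{E}^c_{n-1}\cap \widetilde{E}^c_{n+1}$ I may raise them to their deterministic worst-case values $2^{-1/3}\theta_{n\pm 1}$, where $\theta_{n\pm 1}:=-\frac{1-\tau/2}{2^{2/3}}\zeta_{n\pm 1}^2+(1-\tfrac{2\mu}{3})s$.

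Next I would reduce the supremum defining $\widetilde{F}_n$ to a one-point event at the midpoint $\zeta_n$ of the Gibbs interval. Writing $h^*:=2^{-11/3}\zeta_n^2 + \tfrac{1}{2}(1-\tfrac{2\mu}{3})s$ for the target threshold, I decompose
\[
\widetilde{F}_n \,\subseteq\, \bigl\{\Upsilon^{(1)}_T(\zeta_n)\geq h^*\bigr\}\,\cup\,\bigl\{\widetilde{F}_n \text{ and } \Upsilon^{(1)}_T(\zeta_n)<h^*\bigr\}.
\]
On the second event, conditional on $\Upsilon^{(1)}_T(\zeta_n)<h^*$ the right-half sub-bridge on $[\zeta_n, \zeta_{n+1}]$ must climb by an amount of order $s$ over a distance $1/s^{1+\delta}$; the upper-tail reflection bound $\mathbb{P}(\sup_{[0,L]}B\geq \max\{x,y\}+s')\leq e^{-2s'^2/L}$, which is the upper-tail analog of Lemma~\ref{BBFlucLem}, forces this probability to decay at least like $e^{-cs^{3+\delta}}$. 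The first event, via Proposition~\ref{StationarityProp} (the $y$-translation invariance of $\Upsilon_T(y)+y^2/2^{2/3}$), equals $\mathbb{P}(\Upsilon_T(0)\geq 2^{-11/3}\zeta_n^2 + \tfrac{1}{2}(1-\tfrac{2\mu}{3})s)$, the quantity on the right-hand side of~\eqref{eq:CplexBd}. The factor of $2$ absorbs the negligible reflection term, valid for $s$ above a constant depending on $\mu$ and $T_0$.

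The role of the lower control $\widetilde{\mathcal{E}}_{n\pm 1}$ is to keep the Brownian-bridge comparison honest: without positing lower bounds on the endpoints, the conditional midpoint distribution could sit far below $h^*$ even absent $\widetilde{F}_n$, so that the reduction to a one-point tail would lose information. The main obstacle I anticipate is the constant-chasing: verifying that the threshold $h^*$ obtained from the midpoint Brownian-bridge analysis matches precisely $2^{-11/3}\zeta_n^2 + \tfrac{1}{2}(1-\tfrac{2\mu}{3})s$ once one accounts for the three parabolic rates $(1-\tau/2)/2^{2/3}$, $(1+\tau/2)/2^{2/3}$ and $(1-\tau)/2^{2/3}$ appearing respectively in $\widetilde{E}^c$, $\widetilde{\mathcal{E}}$ and $\widetilde{F}_n$, together with the $2^{-1/3}$ line-ensemble scaling and the identity $\zeta_{n-1}^2+\zeta_{n+1}^2\approx 2\zeta_n^2$ (which yields the factor $2^{-3}$ combining with $2^{-2/3}$ to give $2^{-11/3}$). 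Beyond this algebraic bookkeeping, all ingredients --- Gibbs, monotone coupling, the Brownian-bridge reflection estimate, and stationarity --- are already developed in Section~\ref{Tools}.
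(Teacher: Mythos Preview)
Your proposal has a genuine gap at the very first step: the monotone coupling of Proposition~\ref{Coupling1} goes the \emph{wrong} direction here. The event $\widetilde{F}_n$ is monotone \emph{increasing} in $\Upsilon^{(1)}_T$, and under the coupling the free Brownian bridge (lower boundary $-\infty$) lies \emph{below} the Gibbs curve (lower boundary $2^{-1/3}\Upsilon^{(2)}_T$). Hence
\[
\mathbb{P}_{\mathbf{H}_{2T}}(\widetilde{F}_n)\;\geq\;\widetilde{\mathbb{P}}_{\mathbf{H}_{2T}}(\widetilde{F}_n),
\]
which is the opposite of the domination you claim. In Lemma~\ref{LineEnsmbUse} the analogous event $F_n$ was \emph{decreasing}, so the coupling gave an upper bound; that argument does not transfer verbatim. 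Consequently your reduction to a free bridge with raised endpoints, and the subsequent midpoint/reflection decomposition, never gets off the ground.

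The paper avoids this obstruction by a different device (going back to \cite[Proposition~4.4]{CH14}). It does \emph{not} try to upper-bound $\mathbb{P}(\widetilde{F}_n)$ by a free-bridge probability. Instead, on the event $\widetilde{F}_n$ it introduces the stopping time $\sigma_n=\sup\{y\in(\zeta_n,\zeta_{n+1}):\Upsilon^{(1)}_T(y)\geq U(y)\}$, so $(\zeta_{n-1},\sigma_n)$ is a $\{1\}$-stopping domain with the \emph{right} endpoint pinned at the high level $U(\sigma_n)$. One then defines the auxiliary increasing event $\widetilde{\mathfrak{B}}_n=\{\Upsilon^{(1)}_T(\zeta_n)\geq \text{chord through }(\zeta_{n-1},L(\zeta_{n-1}))\text{ and }(\sigma_n,U(\sigma_n))\}$. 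For \emph{this} event the coupling inequality reads $\mathbb{P}_{\mathbf{H}_{2T}}(\widetilde{\mathfrak{B}}_n)\geq\widetilde{\mathbb{P}}_{\mathbf{H}_{2T}}(\widetilde{\mathfrak{B}}_n)\geq \tfrac12$ (the $\tfrac12$ because a free bridge lies above the chord at an intermediate point with probability $\tfrac12$). This yields
\[
\mathbb{P}\big((\widetilde{E}^c_{n-1}\cap\widetilde{\mathcal{E}}_{n-1})\cap(\widetilde{E}^c_{n+1}\cap\widetilde{\mathcal{E}}_{n+1})\cap\widetilde{F}_n\big)\;\leq\;2\,\mathbb{P}(\widetilde{\mathfrak{B}}_n),
\]
and $\widetilde{\mathfrak{B}}_n$ is now a genuine one-point event for $\Upsilon^{(1)}_T(\zeta_n)$ to which Proposition~\ref{StationarityProp} applies. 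This is also where $\widetilde{\mathcal{E}}_{n-1}$ is actually used: it supplies the lower endpoint $L(\zeta_{n-1})$ of the chord, so that the chord height at $\zeta_n$ is at least the target threshold; your description of the role of $\widetilde{\mathcal{E}}_{n\pm1}$ (``keeping the comparison honest'') misses this quantitative input.
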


\begin{figure}[t]
\includegraphics[width=.5\linewidth]{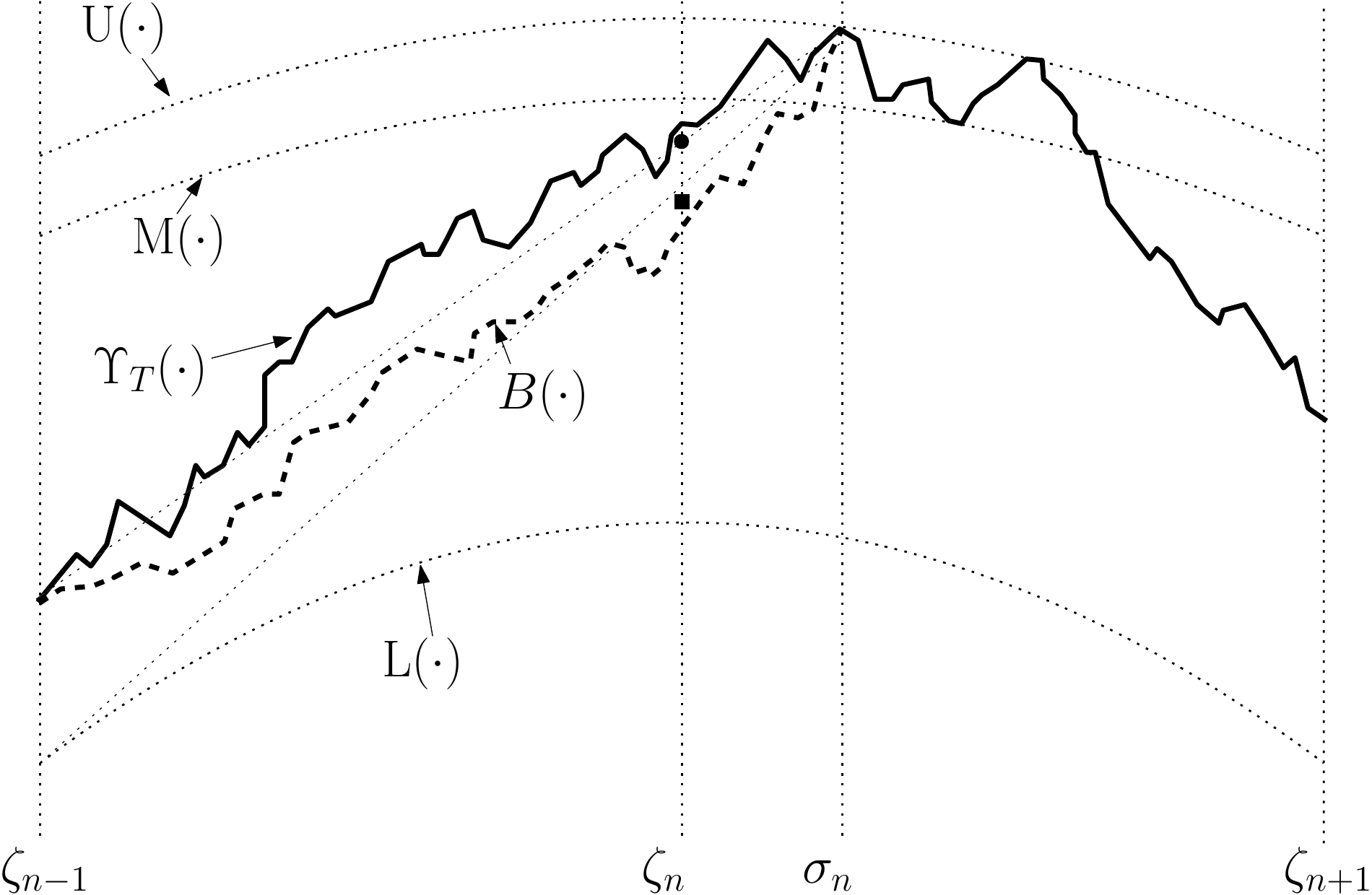}
\caption[]{Illustration from the proof of \eqref{eq:CplexBd}. The three parabolas are $U(\cdot)$, $M(\cdot)$ and $L(\cdot)$. The solid black curve is $\Upsilon^{(1)}_T(\cdot)$ when $\widetilde{E}^c_{n-1}\cap \widetilde{\mathcal{E}}_{n-1} \cap \widetilde{E}^c_{n+1}\cap \widetilde{\mathcal{E}}_{n+1} \cap \widetilde{F}_n$ occurs. Note that $\Upsilon^{(1)}_T(\cdot)$  stays in between $M(\cdot)$ and $L(\cdot)$ at $\zeta_{n-1}$ and $\zeta_{n+1}$. The rightmost point in $(\zeta_n,\zeta_{n+1})$ where $\Upsilon^{(1)}_T(\cdot)$ hits $U(\cdot)$ is labeled  $\sigma_n$. The event that the black curve stays above the square at $\zeta_n$ is $\widetilde{\mathfrak{B}}_n$ and $\mathbb{P}_{\mathbf{H}_{2T}}(\widetilde{\mathfrak{B}}_n)$ (see \eqref{eq:DefP1Up} for $\mathbb{P}_{\mathbf{H}_{2T}}$) is the probability of $\widetilde{\mathfrak{B}}_n$ conditioned on the sigma algebra $\mathcal{F}_{\mathrm{ext}}\big(\{1\}\times (\zeta_{n-1}, \sigma_n)\big)$. On the other hand, $\widetilde{\mathbb{P}}_{\mathbf{H}_{2T}}(\widetilde{\mathfrak{B}}_n)$ (see~\eqref{eq:DefP2Up} for $\widetilde{\mathbb{P}}_{\mathbf{H}_{2T}}$) is the probability of $\widetilde{\mathfrak{B}}_n$ under the free Brownian bridge (scaled by $2^{\frac{1}{3}}$) measure on the interval $(\zeta_{n-1}, \sigma_n)$ with same starting and end point as $\Upsilon^{(1)}_T(\cdot)$. The dashed black curve is such a free Brownian bridge coupled to $\Upsilon^{(1)}_T(\cdot)$ so that $B(y)\leq \Upsilon^{(1)}_T(y)$ for all $y \in (\zeta_{n-1}, \sigma_n)$. Owing to this coupling, $\mathbb{P}_{\mathbf{H}_{2T}}(\widetilde{\mathfrak{B}}_n)\geq \widetilde{\mathbb{P}}_{\mathbf{H}_{2T}}(\widetilde{\mathfrak{B}}_n)$. The probability of $B(\sigma_n)$ staying above the bullet point is $\frac{1}{2}$ which implies that $\widetilde{\mathbb{P}}_{\mathbf{H}_{2T}}(\widetilde{\mathfrak{B}}_n)\geq \frac{1}{2}$. Consequently, we can bound the probability of $(\widetilde{E}^c_{n-1}\cap \widetilde{\mathcal{E}}_{n-1}) \cap (\widetilde{E}^c_{n+1}\cap \widetilde{\mathcal{E}}_{n+1}) \cap \widetilde{F}_n$ by $2\mathbb{P}(\widetilde{\mathfrak{B}}_n)$ (see \eqref{eq:AllIsAbove}). The expected value of $\mathbb{P}(\widetilde{\mathfrak{B}}_n)$ can be bounded above by the upper tail probability of $\Upsilon^{(1)}_T(\zeta_n)+\frac{\zeta^2_n}{2^{2/3}}$ (see~\eqref{eq:EachBd}). The upper bound in \eqref{eq:CplexBd} follows then  by invoking Proposition~\ref{StationarityProp}.}
\label{fig:Figure2}
\end{figure}

\begin{proof}
We parallel the proof of \cite[Proposition~4.4]{CH14} (see also
    \cite[Lemma~4.1]{CorHam16}). Figure~\ref{fig:Figure2} illustrates the main objects in this proof and the argument (whose details we now provide).

By Proposition~\ref{NWtoLineEnsemble} the  curve $2^{-\frac{1}{3}}\Upsilon^{(1)}_T(\cdot)$ from the KPZ line ensemble $\{2^{-\frac{1}{3}}\Upsilon^{(n)}_T(x)\}_{n\in \NN,x\in \RR}$ has the same distribution as $2^{-\frac{1}{3}}\Upsilon_T(\cdot)$. For the rest of this proof, we replace $\Upsilon_T$ by $\Upsilon^{(1)}_T$ in the definitions of $\{\widetilde{E}_n\}_{n}$, $\{\widetilde{F}_n\}_n$ and $\{\widetilde{\mathcal{E}}_n\}_{n}$.
   We define the following three curves:
   \begin{align}
   U(y):= -\tfrac{(1-\tau)}{2^{2/3}}y^2+ \Big(1-\tfrac{\mu}{3}\Big)s, \quad L(y):= -\tfrac{(1+2^{-1}\tau)}{2^{2/3}}y^2 -s^{\frac{2}{3}}, \quad M(y):= -\tfrac{(1-\tau)}{2^{2/3}}y^2& + (1-\tfrac{\mu}{3})s.
\end{align}
If $\widetilde{E}^c_{n-1}\cap \widetilde{\mathcal{E}}_{n-1}$ and $\widetilde{E}^c_{n-1}\cap \widetilde{\mathcal{E}}_{n-1}$ occurs, then, $\Upsilon^{(1)}_T(\cdot)$ stays in between the curves $M(\cdot)$ and $L(\cdot)$ at the points $\zeta_{n-1}$ and $\zeta_{n+1}$ respectively. If $\widetilde{F}_n$ occurs, then, $\Upsilon^{(1)}_T(\cdot)$ touches the curve $U(\cdot)$ at some point in the interval $[\zeta_{n}, \zeta_{n+1}]$. Therefore, on the event $(\widetilde{E}^c_{n-1}\cap \widetilde{\mathcal{E}}_{n-1})\cap (\widetilde{E}^c_{n+1} \cap \widetilde{\mathcal{E}}_{n+1})\cap F_n$, $\Upsilon^{(1)}_T(\cdot)$ hits $U(\cdot)$ somewhere in the interval $(\zeta_n, \zeta_{n+1})$ whereas it stays in between $M(\cdot)$ and $L(\cdot)$ at the points $\zeta_{n-1}$ and $\zeta_{n+1}$. Let us define $\sigma_n := \sup \Big\{y\in (\zeta_n, \zeta_{n+1}): \Upsilon^{(1)}_{T}(y)\geq U(y) \Big\}.$

Recall that $\zeta_{n-1}< \zeta_{n}< \zeta_{n+1}$. Consider the following crossing event
   \begin{align}\label{eq:DefMathfrakB}
    \widetilde{\mathfrak{B}}_n := \Big\{\Upsilon^{(1)}_T(\zeta_n)\geq \frac{\sigma_n-\zeta_n}{\sigma_n -\zeta_{n-1}}L(\zeta_{n-1})+ \frac{\zeta_n-\zeta_{n-1}}{\sigma_n- \zeta_{n-1}}U(\sigma_n)\Big\}.
   \end{align}
   We will use the following abbreviation for the probability measures
   \begin{align}
\mathbb{P}_{\mathbf{H}_{2T}}&:=\mathbb{P}^{1,1,(\zeta_{n-1}, \sigma_n), 2^{-\frac{1}{3}}\Upsilon^{(1)}_T(\zeta_{n-1}), 2^{-\frac{1}{3}}\Upsilon^{(1)}_T(\sigma_{n}), +\infty, 2^{-\frac{1}{3}}\Upsilon^{(2)}_T}_{\mathbf{H}_{2T}},\label{eq:DefP1Up}\\
\widetilde{\mathbb{P}}_{\mathbf{H}_{2T}}&:=\mathbb{P}^{1,1, (\zeta_{n-1}, \sigma_n), 2^{-\frac{1}{3}}\Upsilon^{(1)}_T(\zeta_{n-1}), 2^{-\frac{1}{3}}\Upsilon^{(1)}_T(\sigma_{n}), +\infty, -\infty}_{\mathbf{H}_{2T}}.\label{eq:DefP2Up}
\end{align}

   Since, $(\zeta_{n-1}, \sigma_n)$ is a $\{1\}$-stopping domain (see Definition~\ref{LineEnsemble}) for the KPZ line ensemble, the strong $\mathbf{H}_{2T}$-Brownian Gibbs property (see Lemma~2.5 of \cite{CorHam16}) applies to show that
 \begin{align}\label{eq:TildeB}
 \mathbb{E}&\Big[\mathbbm{1}\big((\widetilde{E}^c_{n-1}\cap \widetilde{\mathcal{E}}_{n-1})\cap (\widetilde{E}^c_{n+1}\cap \widetilde{\mathcal{E}}_{n+1})\cap \widetilde{F}_n\big)\cdot \mathbbm{1}(\widetilde{\mathfrak{B}}_n)|\mathcal{F}_{\mathrm{ext}}\big(\{1\}\times (\zeta_{n-1}, \sigma_{n})\big)\Big] \\ = &\mathbbm{1}\big((\widetilde{E}^c_{n-1}\cap \widetilde{\mathcal{E}}_{n-1})\cap (\widetilde{E}^c_{n+1}\cap \widetilde{\mathcal{E}}_{n+1})\cap \widetilde{F}_n\big)\cdot \mathbb{P}_{\mathbf{H}_{2T}}(\widetilde{\mathfrak{B}}_n).
\end{align}

   By Proposition~\ref{Coupling1}, there exists a monotone coupling\footnote{If $B$ is $\mathbb{P}_{\mathbf{H}_{2T}}$ distributed and $\tilde{B}$ is $\widetilde{\mathbb{P}}_{\mathbf{H}_{2T}}$ distributed, then, under the coupling, $B(y)\geq \tilde{B}(y), \space\forall y\in ( \zeta_{n-1}, \sigma_n)$} between the probability measures $\mathbb{P}_{\mathbf{H}_{2T}}$ and $\widetilde{\mathbb{P}}_{\mathbf{H}_{2T}}$.
 Using this and the fact that the probability of$\widetilde{\mathfrak{B}}_n$ increases under pointwise increase of its sample paths, we have $\mathbb{P}_{\mathbf{H}_{2T}}(\widetilde{\mathfrak{B}}_n)\geq \widetilde{\mathbb{P}}_{\mathbf{H}_{2T}}(\widetilde{\mathfrak{B}}_n)$. Since $\widetilde{\mathbb{P}}_{\mathbf{H}_{2T}}$ is the law of a Brownian bridge on the interval $(\zeta_{n-1}, \sigma_n)$ with end points $2^{-\frac{1}{3}}\Upsilon^{(1)}_T(\zeta_{n-1})$ and $2^{-\frac{1}{3}}\Upsilon^{(1)}_T(\sigma_n)$, the probability that it stays above the line joining the two end points at a given intermediate point is $\frac{1}{2}$. Therefore $\widetilde{\mathbb{P}}_{\mathbf{H}_{2T}}(\widetilde{\mathfrak{B}}_n)\geq\frac{1}{2}$.  Plugging this into \eqref{eq:TildeB} and taking expectation yields
    \begin{align}
\mathbb{P}\big( (\widetilde{E}^c_{n-1}\cap \widetilde{\mathcal{E}}_{n-1})\cap (\widetilde{E}^c_{n+1}\cap \widetilde{\mathcal{E}}_{n-1})\cap \widetilde{F}_n\big)\leq 2\mathbb{E}\Big[\mathbbm{1}((\widetilde{E}^c_{n-1}\cap \widetilde{\mathcal{E}}_{n-1})\cap (\widetilde{E}^c_{n+1}\cap \widetilde{\mathcal{E}}_{n+1})\cap \widetilde{F}_n)\cdot \mathbbm{1}(\widetilde{\mathfrak{B}}_n)\Big].&&\label{eq:AllIsAbove}
\end{align}
  Now, we bound the r.h.s. of \eqref{eq:AllIsAbove}.
 Note the following holds\footnote{To see the first inequality of \eqref{eq:Series3}, note that $(\zeta_n-\zeta_{n-1})/(\sigma_n-\zeta_{n-1})\geq \frac{1}{2}$ and $\sigma^2_n\geq \zeta^2_n -2|\zeta_n|s^{-(1+\delta)}$;  the second inequality follows from $8^{-1}\zeta^2_n- 2|\zeta_n|s^{-(1+\delta)}\geq 0$ for all $|n|\geq 16$.} for all $n\in \ZZ$:
  \begin{align}
  \frac{(\sigma_n -\zeta_{n})\zeta^2_{n-1}+(\zeta_n -\zeta_{n-1})\sigma^2_{n}}{\sigma_n - \zeta_{n-1}}- \zeta^2_n &= (\sigma_n -\zeta_{n})(\zeta_n -\zeta_{n-1})\leq \frac{1}{s^{2+2\delta}}, &&&\label{eq:Series1}\\
  \frac{-\frac{1}{2}(\sigma_n -\zeta_{n})\zeta^2_{n-1}+(\zeta_n -\zeta_{n-1})\sigma^2_{n}}{\sigma_n - \zeta_{n-1}}  +\frac{1}{2}\zeta^2_n &= -\frac{1}{2}(\sigma_n -\zeta_{n})(\zeta_n -\zeta_{n-1}) + \frac{3}{2}\frac{\zeta_n -\zeta_{n-1}}{\sigma_n -\zeta_{n-1}}\sigma^2_{n}, &&&\label{eq:Series2}\\
  \frac{3}{2}\frac{\zeta_n -\zeta_{n-1}}{\sigma_n -\zeta_{n-1}}\sigma^2_{n}-\frac{1}{2}\zeta^2_n &\geq \frac{1}{4}\zeta^2_n - 2\frac{|\zeta_n|}{s^{1+\delta}} \geq \frac{1}{8}\zeta^2_n - \frac{32}{s^{2+2\delta}}.&&&\label{eq:Series3}
\end{align}
   Combining \eqref{eq:Series1}, \eqref{eq:Series2} and \eqref{eq:Series3} yields
  \begin{align*}
  \frac{\sigma_n-\zeta_n}{\sigma_n -\zeta_{n-1}}L(\zeta_{n-1})+ \frac{\zeta_n-\zeta_{n-1}}{\sigma_n- \zeta_{n-1}}U(\sigma_n)\geq -\frac{(1-8^{-1}\tau)}{2^{2/3}}\zeta^2_n -\frac{(4+34\tau)}{2^{2/3}s^{2+2\delta}}+ \frac{1}{2}\Big(\big(1-\frac{\mu}{3}\big)s - s^{\frac{2}{3}}\Big).
  \end{align*}
%
  This implies that when $\widetilde{\mathfrak{B}}_n$ occurs, $\Upsilon^{(1)}_T(\zeta_{n}) $ will be greater than the r.h.s above. The r.h.s is bounded below by $-2^{-\frac{2}{3}}(1-8^{-1}\tau)+\frac{1}{2}\big(1-\tfrac{2\mu}{3}\big)s$ when $s$ is large enough. Hence, we have
   \begin{align}
   \text{r.h.s. of \eqref{eq:AllIsAbove}} \leq 2\,\mathbb{P}(\widetilde{\mathfrak{B}}_n)\leq 2 \,\mathbb{P}\left(\Upsilon^{(1)}_T(\zeta_n)\geq -\frac{(1-8^{-1}\tau)}{2^{2/3}}\zeta^2_n + \frac{1}{2}\big(1-\tfrac{2\mu}{3}\big)s\right).\label{eq:EachBd}
   \end{align}
   Now, the claim follows from \eqref{eq:AllIsAbove} and \eqref{eq:EachBd} by recalling that $\Upsilon^{(1)}_T(\zeta^2_n)+\frac{\zeta^2_n}{2^{2/3}}\stackrel{d}{=}\Upsilon_T(0)$.
\end{proof}

Using \eqref{eq:CplexBd} and a similar analysis as in Lemma~\ref{UpSumProbBd}, there exist $s^{\prime\prime}=s^{\prime\prime}(\epsilon, \mu, T_0)$ and $C^{\prime}=C^{\prime}(\epsilon, T_0)$ such that for all $s\in [\max\{s^{\prime\prime}, \mathbf{s}_0\},\infty)$,
\begin{align*}
\sum_{n\in \ZZ}\mathbb{P} \big( (\widetilde{E}^c_{n-1}\cap \widetilde{\mathcal{E}}_{n-1}) \cap (\widetilde{E}^c_{n+1}\cap \widetilde{\mathcal{E}}_{n+1}) \cap \widetilde{F}_n\big)
&\leq \begin{cases}
  C^{\prime} e^{-\frac{\sqrt{2}}{3}(1-\epsilon)(1-\mu) s^{3/2}} & \text{ if } s\in [\mathbf{s}_0, \mathbf{s}_1]\cup (\mathbf{s}_2, \infty),\\
 C^{\prime} e^{-\frac{\sqrt{2}}{3}\epsilon(1-\mu) s^{3/2}} & \text{ if } s\in (\mathbf{s}_1,\mathbf{s}_2].
  \end{cases}
\end{align*}
Combining this with \eqref{eq:SumRes}, we arrive at \eqref{eq:NoBigMaxRes}.
 \end{proof}

   \smallskip

\noindent\textsc{Final step of the proof of Proposition~\ref{SubstituteTheo}:}  Define $s^{\prime}_0:= \max\{\bar{s}, s^{\prime}, s^{\prime\prime}\}$ where $\bar{s},s^{\prime},s^{\prime\prime}$ are taken from Lemmas~\ref{UpSumProbBd}, \ref{SumProbBdLem} and \ref{BigMaxApp} respectively.
\be
\ii Owing to \eqref{eq:MyNewEq} and \eqref{eq:NoBigMaxRes}, when $T_0>\pi$, there exists $\Theta=\Theta(\epsilon, T_0)$ such that for all $s\in [\max\{s^{\prime}_0,\mathbf{s}_0 \},\infty)$
\begin{align}\label{eq:OneCompBd}
\mathbb{P}\left(\widetilde{\mathcal{A}}^{f}\cap \left(\bigcup_{n\in \ZZ} \widetilde{E}_n\right)^c\right)\leq \begin{cases}
  \Theta e^{-\frac{\sqrt{2}}{3}(1-\epsilon)(1-\mu) s^{3/2}} & \text{ when } s\in [\mathbf{s}_0, \mathbf{s}_1]\cup (\mathbf{s}_2, \infty),\\
 \Theta e^{-\frac{\sqrt{2}}{3}\epsilon(1-\mu) s^{3/2}} & \text{ when } s\in (\mathbf{s}_1,\mathbf{s}_2].
  \end{cases}
\end{align}
Plugging \eqref{eq:OneCompBd} and \eqref{eq:TotSumBd} of Lemma~\ref{UpSumProbBd} into the r.h.s. of \eqref{eq:BasicStep3} yields \eqref{eq:ResultBd}.

\ii When $T_0\in (0,\pi)$, the proof of \eqref{eq:GenRoughUpBd} follows in the same way as in the proof of \eqref{eq:ResultBd} by assuming $\mathbb{P}(\Upsilon_T(0)>s)\leq e^{-cs^{3/2}}$ for all $s\geq s_0$ and $T\in [T_0,\pi]$.
\ee

 \subsubsection{Proof of Proposition~\ref{UpTailLowBd}}\label{UpTailLowBdSEC}

   Let $\mathcal{I}$ be a subinterval of $[-M,M]$ with $|\mathcal{I}|=\theta$ such that $f(y)\geq -\kappa$ for all $y\in \mathcal{I}$. Assume $s$ is large enough such that $s^{-n+2}\leq \theta$. Let $\chi_1\leq \chi_2\in \mathcal{I}$ be such that $\chi_2-\chi_1= s^{-n+2}$. Define
  \begin{align}\label{eq:NewEvent}
  \mathcal{W}_{i} &:= \Big\{ \Upsilon_T(- \chi_{i})\geq - \frac{\chi^2_i}{2^{2/3}} + \big(1+\tfrac{2\mu}{3}\big)s\Big\} \quad \text{for }i=1,2, \\ \mathcal{W}_{\mathrm{int}} &:= \Big\{\Upsilon_T(y)\geq - \frac{y^2}{2^{2/3}} +\big(1+\tfrac{\mu}{3}\big)s \text{ for all }y\in (-\chi_2, -\chi_1)\Big\}.
  \end{align}
We claim that there exists $s^{\prime}= s^{\prime}(\mu, \theta ,\kappa, T_0)$ such that for all $s\geq s^{\prime}$ and $T\geq T_0$
 \begin{align}\label{eq:LowerEvent}
 \mathbb{P}(\mathcal{W}_{1} \cap \mathcal{W}_{2}\cap \mathcal{W}_{\mathrm{int}}) \leq \mathbb{P}(h^{f}_T(0)\geq s).
 \end{align}
To  show this, assume that the event $\mathcal{W}_{1}\cap \mathcal{W}_{2}\cap \mathcal{W}_{\mathrm{int}} $ occurs. Then\footnote{We use below $-\mathcal{I}$ as a shorthand notation for $\{x: -x\in \mathcal{I}\}$},
  \begin{align*}
  \int^{\infty}_{-\infty} e^{T^{1/3}(\Upsilon_T(y)+ f(-y))} dy \geq \int_{-\mathcal{I}}e^{T^{1/3}(\Upsilon_T(y)+ f(-y))}dy \geq 2\theta e^{T^{1/3}((1+\mu/3)s-\kappa)} \geq e^{T^{1/3}s}
  \end{align*}
  where the last inequality holds when $s$ exceeds some $s^{\prime}(\mu, \theta ,\kappa, T_0)$. This shows that
  \[\mathbb{P}\big(\mathcal{W}_{1} \cap \mathcal{W}_{2}\cap \mathcal{W}_{\mathrm{int}}\big)\leq \mathbb{P}\Big(\int^{\infty}_{-\infty} e^{T^{1/3}(\Upsilon_T(y)+ f(y))} dy\geq e^{T^{1/3}s}\Big)= \mathbb{P}(h^{f}_T(0)\geq s).\]

 To finish the proof of \eqref{eq:UpTailLowBd} we combine \eqref{eq:LowerEvent} with \eqref{eq:AcLowBd} below and take $s_0= \max\{s^{\prime}, s^{\prime\prime}\}$.

 \begin{claim} There exist $s^{\prime\prime} = s^{\prime\prime}(\mu, n, T_0)$, $K=K(\mu)>0$ such that for all $s\geq s^{\prime\prime}$ and $T\geq T_0$,
  \begin{align}\label{eq:AcLowBd}
  \mathbb{P}\left(\mathcal{W}_{1}\cap \mathcal{W}_{2} \cap \mathcal{W}_{\mathrm{int}}\right)\geq \Big(\mathbb{P}\big(\Upsilon_T(0)>\big(1+\tfrac{2\mu}{3}\big)s\big)\Big)^2 - e^{-Ks^n}.
  \end{align}
\end{claim}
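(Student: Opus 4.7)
The plan is to split
\begin{equation}
\mathbb{P}(\mathcal{W}_1\cap\mathcal{W}_2\cap\mathcal{W}_{\mathrm{int}})=\mathbb{P}(\mathcal{W}_1\cap\mathcal{W}_2)-\mathbb{P}(\mathcal{W}_1\cap\mathcal{W}_2\cap\mathcal{W}_{\mathrm{int}}^c),
\end{equation}
lower bound the first term via the FKG inequality combined with parabolic stationarity, and upper bound the second term using the Brownian Gibbs property of the KPZ line ensemble together with a Brownian bridge fluctuation estimate. For the first term, both $\mathcal{W}_1$ and $\mathcal{W}_2$ are upper-tail events for $\mathcal{H}^{\mathbf{nw}}(2T,\cdot)$ at the space points $(2T)^{2/3}(-\chi_i)$. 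Applying \eqref{eq:RevFKG} gives $\mathbb{P}(\mathcal{W}_1\cap\mathcal{W}_2)\geq\mathbb{P}(\mathcal{W}_1)\mathbb{P}(\mathcal{W}_2)$, and by Proposition~\ref{StationarityProp} each factor equals $\mathbb{P}(\Upsilon_T(0)>(1+\tfrac{2\mu}{3})s)$, producing exactly the squared term in \eqref{eq:AcLowBd}.

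For the second term, I pass to the KPZ line ensemble: by Proposition~\ref{NWtoLineEnsemble}, $\Upsilon_T(\cdot)\stackrel{d}{=}\Upsilon_T^{(1)}(\cdot)$, and $\{2^{-1/3}\Upsilon_T^{(k)}\}$ enjoys the $\mathbf{H}_{2T}$-Brownian Gibbs property. Conditioning on the $\sigma$-algebra $\mathcal{F}_{\mathrm{ext}}(\{1\}\times(-\chi_2,-\chi_1))$ and invoking Proposition~\ref{Coupling1} with the lower boundary $2^{-1/3}\Upsilon_T^{(2)}$ replaced by $-\infty$, I couple $2^{-1/3}\Upsilon_T^{(1)}|_{(-\chi_2,-\chi_1)}$ pointwise above a free Brownian bridge $\tilde B$ on this interval with matching endpoints. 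Since $\mathcal{W}_{\mathrm{int}}^c$ is monotone in the downward direction, its conditional probability is bounded by the corresponding probability under $\tilde B$, uniformly in the data on $\mathcal{F}_{\mathrm{ext}}$.

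On the event $\mathcal{W}_1\cap\mathcal{W}_2$ the endpoints obey $\Upsilon_T^{(1)}(-\chi_i)\geq-\chi_i^2/2^{2/3}+(1+\tfrac{2\mu}{3})s$, while across the short interval $(-\chi_2,-\chi_1)$ of length $L=s^{-n+2}$ the threshold $\tau(y):=-y^2/2^{2/3}+(1+\tfrac{\mu}{3})s$ varies by at most $|\chi_2^2-\chi_1^2|/2^{2/3}\leq 2Ms^{-n+2}/2^{2/3}$ since $|\chi_i|\leq M$. Consequently, the event $\mathcal{W}_{\mathrm{int}}^c$ requires the bridge to dip below its minimum endpoint value by at least $\tfrac{\mu s}{3}-O(s^{-n+2})\geq\tfrac{\mu s}{4}$ for $s$ sufficiently large. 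Applying Lemma~\ref{BBFlucLem} (keeping in mind that $2^{-1/3}\Upsilon_T^{(1)}$ is the standard bridge, so the relevant deviation is $2^{-1/3}\tfrac{\mu s}{4}$) bounds this conditional probability uniformly by $\exp(-2\cdot 2^{-2/3}(\mu s/4)^2/L)=\exp(-Ks^n)$ for some $K=K(\mu)>0$. Taking expectations and subtracting from the FKG lower bound yields \eqref{eq:AcLowBd}.

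The main subtlety is ensuring the direction of the monotone coupling is correct: weakening the lower boundary from $2^{-1/3}\Upsilon_T^{(2)}$ to $-\infty$ removes the Hamiltonian penalty that keeps the top curve pushed up, so $\tilde B$ lies below the actual top curve, and this indeed over-estimates the downward-crossing probability that defines $\mathcal{W}_{\mathrm{int}}^c$. Once this is fixed, the rest is a straightforward conversion of the Gaussian bridge tail $\exp(-cr^2/L)$ with $r=\Theta(s)$ and $L=s^{-n+2}$ into the target bound $\exp(-Ks^n)$.
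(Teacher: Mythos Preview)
Your proposal is correct and follows essentially the same approach as the paper: the identical decomposition $\mathbb{P}(\mathcal{W}_1\cap\mathcal{W}_2)-\mathbb{P}(\mathcal{W}_1\cap\mathcal{W}_2\cap\mathcal{W}_{\mathrm{int}}^c)$, the FKG inequality \eqref{eq:RevFKG} together with Proposition~\ref{StationarityProp} for the first term, and the $\mathbf{H}_{2T}$-Brownian Gibbs property with the monotone coupling of Proposition~\ref{Coupling1} (lowering $2^{-1/3}\Upsilon_T^{(2)}$ to $-\infty$) followed by Lemma~\ref{BBFlucLem} for the second term. The paper packages the endpoint-lowering step by passing directly to $\mathbb{P}^{1,1,(-\chi_2,-\chi_1),\,-\chi_2^2/2+2^{-1/3}(1+\tfrac{2\mu}{3})s,\,-\chi_1^2/2+2^{-1/3}(1+\tfrac{2\mu}{3})s}_{\mathrm{free}}(\mathcal{W}_{\mathrm{int}}^c)$, whereas you spell out the parabolic-threshold variation bound explicitly; these are the same argument.
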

\begin{proof}
  We start by writing $\mathbb{P}\big(\mathcal{W}_{1} \cap \mathcal{W}_{2}\cap \mathcal{W}_{\mathrm{int}}\big) = \mathbb{P}(\mathcal{W}_{1}\cap \mathcal{W}_{2}) - \mathbb{P}(\mathcal{W}_{1}\cap \mathcal{W}_{2} \cap \mathcal{W}^c_{\mathrm{int}})$.
   Using the FKG inequality from \eqref{eq:RevFKG}, \begin{align}\label{eq:TwoPtLowBd}
   \mathbb{P}(\mathcal{W}_{1}\cap \mathcal{W}_{2})\geq \mathbb{P}(\mathcal{W}_{1})\mathbb{P}(\mathcal{W}_{2})\geq \Big(\mathbb{P}\big(\Upsilon_T(0)>\big(1+\tfrac{2\mu}{3}\big)s\big)\Big)^2
   \end{align}
   where the last inequality follows from Proposition~\ref{StationarityProp}. Note that \eqref{eq:TwoPtLowBd} provides a lower bound for the first term on the r.h.s. of \eqref{eq:AcLowBd}.
   To complete the proof, we need to demonstrate an upper bound on $\mathbb{P}(\mathcal{W}_{1} \cap \mathcal{W}_{2}\cap \mathcal{W}^c_{\mathrm{int}})$ of the form $e^{-Ks^n}$. To achieve this we go to the KPZ line ensemble and use its Brownian Gibbs property. We may replace $\Upsilon_T$ by $\Upsilon^{(1)}_T$ in all definitions without changing the value of $\mathbb{P}(\mathcal{W}_{1} \cap \mathcal{W}_{2}\cap \mathcal{W}^c_{\mathrm{int}})$ (see Proposition~\ref{NWtoLineEnsemble}). Let us define
\begin{align}
\mathbb{P}_{\mathbf{H}_{2T}}&:=\mathbb{P}^{1,1, (-\chi_2, -\chi_1), 2^{-\frac{1}{3}}\Upsilon^{(1)}_T(-\chi_2), 2^{-\frac{1}{3}}\Upsilon^{(1)}_T(-\chi_1), +\infty, 2^{-\frac{1}{3}}\Upsilon^{(2)}_T}_{\mathbf{H}_{2T}},\\
\widetilde{\mathbb{P}}_{\mathbf{H}_{2T}}&:=\mathbb{P}^{1,1, (-\chi_2, -\chi_1), 2^{-\frac{1}{3}}\Upsilon^{(1)}_T(-\chi_2), 2^{-\frac{1}{3}}\Upsilon^{(1)}_T(-\chi_1), +\infty, -\infty}_{\mathbf{H}_{2T}}.
\end{align} Using the $\mathbf{H}_{2T}$-Brownian Gibbs property of the KPZ line ensemble $\{2^{-\frac{1}{3}}\Upsilon^{(n)}_T(x)\}_{n\in \NN,x\in \RR}$,
 \begin{align}
 \mathbb{P}\big(\mathcal{W}_{1}\cap \mathcal{W}_{2}\cap \mathcal{W}^{c}_{\mathrm{int}}\big) = \mathbb{E}\big[\mathbbm{1}(\mathcal{W}_{1}\cap \mathcal{W}_{2})\cdot \mathbb{P}_{\mathbf{H}_{2T}} (\mathcal{W}^c_{\mathrm{int}})\big].\label{eq:CondInt}
\end{align}

Via Proposition~\ref{Coupling1}, there exists a monotone coupling between $\mathbb{P}_{\mathbf{H}_{2T}}$ and $\widetilde{\mathbb{P}}_{\mathbf{H}_{2T}}$ so that
   \begin{align}\label{eq:CouplingIneq}
   \mathbb{P}_{\mathbf{H}_{2T}} (\mathcal{W}^c_{\mathrm{int}})\leq \widetilde{\mathbb{P}}_{\mathbf{H}_{2T}} (\mathcal{W}^c_{\mathrm{int}}).
   \end{align}
   Recall that $\widetilde{\mathbb{P}}_{\mathbf{H}_{2T}}$ is the measure of a Brownian bridge on  $(-\chi_2, -\chi_1)$ with starting and end points at $2^{-\frac{1}{3}}\Upsilon^{(1)}_T(-\chi_2)$ and $2^{-\frac{1}{3}}\Upsilon^{(1)}_T(-\chi_1)$.
 Applying \eqref{eq:CouplingIneq} into the r.h.s. of \eqref{eq:CondInt} implies
 \begin{align}
 \mathbbm{1}(\mathcal{W}_{1}\cap \mathcal{W}_{2}) \cdot \widetilde{\mathbb{P}}_{\mathbf{H}_{2T}} (\mathcal{W}^c_{\mathrm{int}})\leq \mathbb{P}^{1,1, (-\chi_2, -\chi_1), -\frac{\chi^2_2}{2}+2^{-\frac{1}{3}}\big(1+\tfrac{2\mu}{3}\big)s,  -\frac{\chi^2_1}{2}+2^{-\frac{1}{3}}\big(1+\tfrac{2\mu}{3}\big)s}_{\mathrm{free}} \big(\mathcal{W}^c_{\mathrm{int}}\big).
 \end{align}
 Therefore (using Lemma~\ref{BBFlucLem} for the second inequality) there exists $K=K(\mu)$ such that
 \begin{align*}
 \text{l.h.s. of \eqref{eq:CondInt}}\leq \mathbb{P}^{1,1, (-\chi_2, -\chi_1), -\frac{\chi^2_2}{2}+2^{-\frac{1}{3}}\big(1+\tfrac{2\mu}{3}\big)s,  -\frac{\chi^2_1}{2}+2^{-\frac{1}{3}}\big(1+\tfrac{2\mu}{3}\big)s}_{\mathrm{free}}(\mathcal{W}^c_{\mathrm{int}})\leq e^{-Ks^{n}}.
 \end{align*}

 \end{proof}


 \subsection{Proof of Theorem~\ref{Main6Theorem}}\label{Proof5Theorem}

Theorem~\ref{Main6Theorem} follows by combining all three parts of Theorem~\ref{GrandUpTheorem} with the following results which are in the same spirit of Proposition~\ref{SubstituteTheo} and~\ref{UpTailLowBd} respectively.

Recall $\Upsilon_T$ and $h^{\mathrm{Br}}_T$ from \eqref{eq:DefUpsilon} and \eqref{eq:h_BrDefine} respectively.

\bp\label{BrUpTailSubstituteTheo}
Fix $\epsilon, \mu\in (0,\frac{1}{2})$.
\be
\ii Fix $T_0>\pi$. Suppose there exists $s_0 =s_0(\epsilon, T_0)$ and for any $T\geq T_0$, there exist $s_2=s_2(\epsilon, T)$ and $s_3=s_3(\epsilon, T)$ with $s_1\leq s_2\leq s_3$ such that  for any $s\in [s_0, \infty)$,
\begin{align}\label{eq:AssumeNW}
\mathbb{P}\big(\Upsilon_T(0)>s\big)\leq \begin{cases} e^{-\frac{4}{3}(1-\epsilon)s^{\frac{3}{2}}} & \text{ if } s\in [s_0, s_1]\cup (s_2,\infty),\\
 e^{-\frac{4}{3}\epsilon s^{\frac{3}{2}}} & \text{ if } s\in (s_1, s_2].
\end{cases}
\end{align}
Then, there exists $s^{\prime}_0 = s^{\prime}_0(\epsilon, \mu, T_0)$ such that for any $T>T_0$ and $s\in [\max\{s^{\prime}_0, \mathbf{s}_0\},\infty)$, we have (recall $\mathbf{s}_0, \mathbf{s}_1$ and $\mathbf{s}_2$ from \eqref{eq:mathbfs})
\begin{align}\label{eq:BrUpTailUpLow}
\mathbb{P}\big(h^{\mathrm{Br}}_T(0)>s\big)\leq
\begin{cases}
e^{-\frac{\sqrt{2}}{3}(1-\epsilon)(1-\mu)s^{3/2}}+ e^{-\frac{1}{9\sqrt{3}} (\mu s)^{3/2}}& \text{ if } s\in [\mathbf{s}_0, \mathbf{s}_1]\cup (\mathbf{s}_2,\infty),\\
e^{-\frac{\sqrt{2}}{3}\epsilon(1-\mu)s^{3/2}} +e^{-\frac{1}{9\sqrt{3}} (\mu s)^{3/2}}& \text{ if } s\in (\mathbf{s}_1, \mathbf{s}_2].
\end{cases}
\end{align}
\ii For any $T_0\in (0,\pi)$, there exists $s^{\prime}_0= s^{\prime}_0(T_0)>0$ satisfying the following: if there exists $s_0 = s_0(T_0)>0$ such that $\mathbb{P}(\Upsilon_T(0)>s)\leq e^{-cs^{3/2}}$ for all $s\geq s_0$ and $T\in [T_0, \pi]$, then,
\begin{align}\label{eq:BrUpRoughBd}
\mathbb{P}\big(h^{f}_{T}(0)>s\big)\leq e^{-cs^{3/2}}, \quad \forall s\in [\max\{s^{\prime}_0,s_0 \}, \infty), T \in [T_0, \pi].
\end{align}
\ee
\ep

\bp\label{BrUpTailLowBdProp}
Fix $\mu\in (0,\frac{1}{2})$, $n\in \ZZ_{\geq 3}$ and $T_0>\pi$. Then, there exist $s_0=s_0 (\mu,n, T_0), K=K(\mu, n)>0$ such that for all $s\geq s_0$ and $T\geq T_0$,
\begin{align}\label{eq:BrLowTail}
\mathbb{P}(h^{\mathrm{Br}}_T(0)> s)\geq \Big(\mathbb{P}\Big(\Upsilon_T(0)>\big(1+\frac{2\mu}{3}\big)s\Big)\Big)^2 - e^{-Ks^n}.
\end{align}
\ep

We prove these propositions using similar arguments  as in Section~\ref{UpTailUpBdSEC} and~\ref{UpTailLowBdSEC}. Propositions ~\ref{BrUpTailSubstituteTheo} and \ref{BrUpTailLowBdProp} are proved in Sections~\ref{BrUpTailUpBd} and Section~\ref{BrUpTailLowBd}, respectively.

  \smallskip

\begin{proof}[Proof of Theorem~\ref{Main6Theorem}]
This theorem is proved in the same way as Theorem~\ref{Main4Theorem} by combining Proposition~\ref{BrUpTailSubstituteTheo} and Proposition~\ref{BrUpTailLowBdProp}. We do not duplicate the details.
\end{proof}

\subsubsection{Proof of Proposition~\ref{BrUpTailSubstituteTheo}}\label{BrUpTailUpBd}

To prove this proposition, we use similar arguments as in Section~\ref{UpTailLowBdSEC}. Let $\tau\in (0,\frac{1}{2})$ be fixed (later we choose its value). Recall the events $\widetilde{E}_n$ and $\widetilde{F}_n$ from Section~\ref{UpTailUpBdSEC} and define
\begin{align}\label{eq:UpBrEvent}
\widetilde{\mathcal{A}}^{\mathrm{Br}}:= \left\{\int^{\infty}_{-\infty} e^{T^{1/3}\big(\Upsilon_T(y)+ B(-y)\big)} dy> e^{sT^{1/3}}\right\}
\end{align}
where $B$ is a two sided Brownian motion with diffusion coefficient $2^{\frac{1}{3}}$ and $B(0)=0$. Appealing to Proposition~\ref{Distribution}, we see that $\mathbb{P}(h^{\mathrm{Br}}_T(0)>s)= \mathbb{P(\widetilde{\mathcal{A}}^{\mathrm{Br}}})$. Now, we write
\begin{align}\label{eq:BrSplit}
\mathbb{P}\big(\widetilde{\mathcal{A}}^{\mathrm{Br}}\big)\leq \sum_{n\in \ZZ}\mathbb{P}\big( \widetilde{E}_n\big) + \mathbb{P}\Big(\widetilde{\mathcal{A}}^{\mathrm{Br}}\cap \big(\bigcup_{n\in \ZZ} \widetilde{E}_n\big)^{c}\cap \big(\bigcup_{n\in \ZZ} \widetilde{F}_n\big)\Big) + \mathbb{P}\Big(\widetilde{\mathcal{A}}^{\mathrm{Br}}\cap \big(\bigcup_{n\in \ZZ} \widetilde{E}_n\big)^{c}\cap \big(\bigcup_{n\in \ZZ} \widetilde{F}_n\big)^c\Big).
\end{align}
 Using Lemma~\ref{UpSumProbBd} (see \eqref{eq:TotSumBd}) and Lemma~\ref{BigMaxApp} (see \eqref{eq:NoBigMaxRes}) we can bound the first two terms on the right side hand side of \eqref{eq:BrSplit}.  However, unlike in Theorem~\ref{SubstituteTheo}, the last term in \eqref{eq:BrSplit} is not zero. We now provide an upper bound to this term.
   \smallskip

\begin{claim} There exists $s^{\prime}=s^{\prime}(\tau, \mu)$ such that for all $s\geq s^{\prime}$,
\begin{align}\label{eq:BrwLEv}
\mathbb{P}\Big(\widetilde{\mathcal{A}}^{\mathrm{Br}}\cap \big(\bigcup_{n\in \ZZ} \widetilde{E}_n\big)^{c}\cap \big(\bigcup_{n\in \ZZ} \widetilde{F}_n\big)^c\Big) \leq  \exp\left(-\tfrac{\sqrt{(1-2\tau)}}{3\sqrt{6}}\Big(\tfrac{2\mu s}{3}+\log\big((2\pi)^{-1} \tau (2T)^{\frac{1}{3}}\big)\Big)^{\frac{3}{2}} \right). &&
\end{align}\end{claim}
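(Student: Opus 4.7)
The plan is to use the intersection of complementary events to obtain a deterministic upper bound on $\Upsilon_T(y)$ valid for all $y \in \RR$, then turn the remaining Brownian event into a tail statement about a quantity of the form $\sup_y(B(y) - cy^2)$, and finally invoke Lemma~\ref{BMminusParabola}.

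First I would observe that on $(\bigcup_n \widetilde{E}_n)^c \cap (\bigcup_n \widetilde{F}_n)^c$ the curve satisfies $\Upsilon_T(y) \leq -\tfrac{1-\tau}{2^{2/3}}y^2 + (1-\tfrac{\mu}{3})s$ for every $y\in\RR$: the complement of $\widetilde{F}_n$ rules out any upward excursion above this parabola on $[\zeta_n,\zeta_{n+1}]$, while the $\widetilde{E}^c_n$'s pin the mesh values. Substituting this pointwise bound into the integrand defining $\widetilde{\mathcal{A}}^{\mathrm{Br}}$ and cancelling the common factor $e^{T^{1/3}s}$ reduces the event we must bound to
\begin{align*}
\int_{\RR} \exp\Bigl(T^{1/3}\Bigl(-\tfrac{1-\tau}{2^{2/3}}y^2 + B(-y)\Bigr)\Bigr)\,dy > \exp(T^{1/3}\mu s/3).
\end{align*}

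Next I would split the quadratic coefficient as $\tfrac{1-\tau}{2^{2/3}} = \tfrac{\tau}{2^{2/3}} + \tfrac{1-2\tau}{2^{2/3}}$, pairing the first summand with a Gaussian integral and absorbing the second into the supremum $M := \sup_{y}\bigl(B(y) - \tfrac{1-2\tau}{2^{2/3}}y^2\bigr)$, which equals $\sup_y(B(-y) - \tfrac{1-2\tau}{2^{2/3}}y^2)$ by the two-sided symmetry of $B$. The pointwise bound $B(-y) - \tfrac{1-2\tau}{2^{2/3}}y^2 \leq M$ then gives
\begin{align*}
\int_{\RR} e^{T^{1/3}(-(1-\tau)y^2/2^{2/3} + B(-y))}\,dy \;\leq\; e^{T^{1/3}M}\int_{\RR} e^{-T^{1/3}\tau y^2/2^{2/3}}\,dy \;=\; e^{T^{1/3}M}\sqrt{\tfrac{2\pi}{\tau(2T)^{1/3}}},
\end{align*}
and combining with the previous integral inequality, taking logarithms and multiplying by $2$ yields that the event is contained in $\{M \geq m_0\}$ for $m_0 := \tfrac{1}{2T^{1/3}}\bigl(\tfrac{2T^{1/3}\mu s}{3} + \log(\tau(2T)^{1/3}/(2\pi))\bigr)$.

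Finally I would apply Lemma~\ref{BMminusParabola}. Using $B(y) \stackrel{d}{=} \widetilde{B}(2^{2/3}y)$ for standard $\widetilde{B}$, the time change $u = 2^{2/3}y$ gives $M \stackrel{d}{=} \sup_u(\widetilde{B}(u) - \tfrac{1-2\tau}{4}u^2)$. Lemma~\ref{BMminusParabola} applied with $c = (1-2\tau)/4$ (so $\sqrt{c} = \tfrac{1}{2}\sqrt{1-2\tau}$) then produces, for any fixed small $\xi$ and $s$ exceeding some $s^\prime(\xi,\tau,\mu)$,
\begin{align*}
\mathbb{P}(M \geq m_0) \;\leq\; \tfrac{1}{\sqrt{3}}\exp\Bigl(-\tfrac{4(1-\xi)\sqrt{1-2\tau}}{3\sqrt{3}}\, m_0^{3/2}\Bigr).
\end{align*}
Since $2T^{1/3} m_0 = \tfrac{2T^{1/3}\mu s}{3} + \log(\tau(2T)^{1/3}/(2\pi))$ coincides asymptotically with $\tfrac{2\mu s}{3}+\log(\tau(2T)^{1/3}/(2\pi))$ in the regime where $\mu s$ dominates the log correction, and since our exponent coefficient $\tfrac{4}{3\sqrt{3}}$ exceeds $2^{3/2}\cdot\tfrac{1}{3\sqrt{6}} = \tfrac{2}{3\sqrt{3}}$ by a factor of two, the resulting inequality upgrades to the claimed form once $s\geq s^\prime(\tau,\mu)$.

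The main obstacle is choosing the split of the quadratic coefficient. The form of the target bound forces the unique decomposition $\tfrac{1-\tau}{2^{2/3}} = \tfrac{\tau}{2^{2/3}} + \tfrac{1-2\tau}{2^{2/3}}$: the first summand drives a Gaussian integral that produces the logarithmic correction $\log(\tau(2T)^{1/3}/(2\pi))$, while the second supplies the factor $\sqrt{1-2\tau}$ in the exponent prefactor through Lemma~\ref{BMminusParabola}. Once this split is identified, the rest is routine algebra, and the factor-of-two slack in the exponent coefficient of Lemma~\ref{BMminusParabola} is precisely what is needed to convert $m_0^{3/2}$ into $(2T^{1/3}m_0)^{3/2}/(2\sqrt{2})$ and produce the stated inequality.
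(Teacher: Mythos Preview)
Your approach mirrors the paper's exactly: deduce the pointwise parabolic upper bound on $\Upsilon_T$ from $(\cup_n\widetilde E_n)^c\cap(\cup_n\widetilde F_n)^c$, reduce $\widetilde{\mathcal A}^{\mathrm{Br}}$ to an integral inequality involving only $B$, split the quadratic as $\tfrac{1-\tau}{2^{2/3}}=\tfrac{\tau}{2^{2/3}}+\tfrac{1-2\tau}{2^{2/3}}$, bound the integral by $e^{T^{1/3}M}$ times a Gaussian integral, and invoke Lemma~\ref{BMminusParabola}. The paper takes $\xi=\tfrac12$ in that lemma and uses the threshold $m_0^{\mathrm{paper}}=\tfrac{\mu s}{3}+\tfrac12\log\bigl((2\pi)^{-1}\tau(2T)^{1/3}\bigr)$, so that $\tfrac{2\mu s}{3}+\log(\ldots)=2m_0^{\mathrm{paper}}$ and the lemma's exponent $\tfrac{2\sqrt{1-2\tau}}{3\sqrt{3}}(m_0^{\mathrm{paper}})^{3/2}$ equals the target $\tfrac{\sqrt{1-2\tau}}{3\sqrt{6}}(2m_0^{\mathrm{paper}})^{3/2}$ on the nose.

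Your more careful bookkeeping produces $m_0=\tfrac{\mu s}{3}+\tfrac{1}{2T^{1/3}}\log(\ldots)$, differing from the paper's by the extra $T^{-1/3}$ on the log term (the paper's contrapositive computation appears to drop this factor). Your final bridging step, however, does not close: the claim that $2T^{1/3}m_0$ coincides asymptotically with $\tfrac{2\mu s}{3}+\log(\ldots)$ is false, since their ratio tends to $T^{1/3}$ as $s\to\infty$, not to $1$. The factor-of-two slack in the lemma's coefficient can absorb a \emph{bounded} ratio between $m_0$ and $\tfrac12\bigl(\tfrac{2\mu s}{3}+\log(\ldots)\bigr)$, but not one growing with $T$. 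Indeed, for fixed $s$ your bound $e^{-c\,m_0^{3/2}}$ tends to the positive number $e^{-c(\mu s/3)^{3/2}}$ as $T\to\infty$, while the stated right-hand side tends to $0$; so the literal inequality cannot follow from your $m_0$ uniformly in $T$ with $s'=s'(\tau,\mu)$. What the paper actually \emph{uses} downstream (after setting $\tau=\tfrac18$) is only a bound of the shape $e^{-c(\mu s)^{3/2}}$ uniform in $T>\pi$, and your correct $m_0$-bound does deliver that, since $m_0\ge \tfrac{\mu s}{3}-O(1)$ uniformly for $T>\pi$.
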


\begin{proof}
Note that
 \begin{align}\label{eq:EveInc}
 \Big\{\widetilde{\mathcal{A}}^{\mathrm{Br}}\cap \big(\bigcup_{n\in \ZZ} \widetilde{E}_n\big)^{c}\cap \big(\bigcup_{n\in \ZZ} \widetilde{F}_n\big)^c\Big\}& \subseteq \Big\{\int^{\infty}_{-\infty}e^{T^{1/3}\big(-\frac{(1-\tau)y^2}{2^{2/3}}+ B(-y)\big)} dy \geq e^{3^{-1}\mu s T^{1/3}}\Big\}.
 \end{align}
 We claim that
 \begin{align}\label{eq:Cont4}
 \text{r.h.s. of \eqref{eq:EveInc}}\subseteq  \Big\{\max_{y\in \RR}\Big\{ - \frac{(1-2\tau)y^2+ B(-y)}{2^{2/3}}\Big\}\geq \tfrac{1}{3}\mu s +\tfrac{1}{2}\log((2\pi)^{-1}\tau (2T)^{\frac{1}{3}}) \Big\}.
 \end{align}
 To see this by contradiction, assume the complement of the r.h.s. of \eqref{eq:Cont4}. This implies that
 \begin{align}
 \int^{\infty}_{-\infty}e^{T^{1/3}\Big(-\frac{(1-\tau)y^2}{2^{2/3}}+ B(-y)\Big)} dy< \sqrt{(2\pi)^{-1} \tau (2T)^{1/3}}e^{3^{-1}\mu s T^{1/3}}\int^{\infty}_{-\infty}e^{-\tau y^2T^{1/3}/2^{\frac{2}{3}}} dy= e^{3^{-1}\mu s T^{1/3}}.
 \end{align}
 Therefore, \eqref{eq:Cont4} holds. Applying Proposition~\ref{BMminusParabola} (with $\xi=\frac{1}{2}$), we see that
  \begin{align*}
  \mathbb{P}\big(\textrm{r.h.s. of }\eqref{eq:Cont4}\big)
  \leq \frac{1}{\sqrt{3}} \exp\bigg(-\frac{\sqrt{(1-2\tau)}}{3\sqrt{6}}\Big(\frac{2\mu s}{3}+\log((2\pi)^{-1} \tau (2T)^{\frac{1}{3}})\Big)^{\frac{3}{2}} \bigg).
  \end{align*}
  when $s$ is large enough.
  Combining this with \eqref{eq:EveInc}, we arrive at \eqref{eq:BrwLEv} showing the claim.
\end{proof}

Now, we turn to complete the proof of Proposition~\ref{BrUpTailSubstituteTheo}. Choosing $\tau = \frac{1}{8}$,  we notice
\[ \text{r.h.s. of \eqref{eq:BrwLEv}} \leq \exp\Big(-\frac{1}{9\sqrt{3}}\Big(\mu s-\frac{3\log(16\pi)}{2}\Big)^{3/2}\Big), \quad  \forall\, T>\pi.\]
For the rest of this proof, we will fix some $T\geq T_0$ and assume that there exist $s_0=s_0(\epsilon, T_0)$, $s_1=s_1(\epsilon, T)$ and $s_2=s_2(\epsilon, T)$ with $s_1\leq s_2$ such that \eqref{eq:AssumeNW} is satisfied for all $s\in [s_0,\infty)$. Owing to \eqref{eq:TotSumBd} of Lemma~\ref{UpSumProbBd} and \eqref{eq:NoBigMaxRes} of Lemma~\ref{BigMaxApp}, there exist $\Theta= \Theta(\epsilon, T_0)$ and $\tilde{s}=\tilde{s}(\epsilon, \mu, T_0)$ such that for all $s\in [\max\{\tilde{s},\mathbf{s}_0)\},\infty)$,
\begin{align*}
\mathbb{P}\Big(\bigcup_{n\in \ZZ} \widetilde{E}_n\Big) +
\mathbb{P}\Big(\widetilde{\mathcal{A}}^{\mathrm{Br}}\cap \big(\bigcup_{n\in \ZZ} \widetilde{E}_n\big)^c \cap \big(\bigcup_{n\in \ZZ} \widetilde{F}_n\big)\Big)
\leq \begin{cases}
 \Theta e^{- \frac{\sqrt{2}}{3}(1-\epsilon)(1-\mu)s^{3/2}} & \text{ if }s\in [\mathbf{s}_0,\mathbf{s}_1 ]\cup (\mathbf{s}_2, \infty),\\
 \Theta e^{- \frac{\sqrt{2}}{3}\epsilon(1-\mu)s^{3/2}} & \text{ if }s\in (\mathbf{s}_1, \mathbf{s}_2].
 \end{cases}
\end{align*}
Combining this with \eqref{eq:BrwLEv} and plugging into \eqref{eq:BrSplit}, we get \eqref{eq:BrUpTailUpLow} for all $T>T_0\geq \pi$. In the case when $T_0\in (0,\pi)$, we obtain \eqref{eq:BrUpRoughBd} in a similar way as in the proof of \eqref{eq:GenRoughUpBd} of Proposition~\ref{SubstituteTheo} by combining the inequality of the above display with $\mathbb{P}(\Upsilon_T(0))\leq e^{-cs^{3/2}}$ for all $s\geq s_0$ and $T\in [T_0,\pi].$

 \subsubsection{Proof of Proposition~\ref{BrUpTailLowBdProp}}\label{BrUpTailLowBd}

 We use similar argument as in Proposition~\ref{UpTailLowBd}. The main difference from the proof of Proposition~\ref{UpTailLowBd} is that we do not expect \eqref{eq:LowerEvent} to hold because the initial data is now a two sided Brownian motion, hence, \eqref{eq:LowInitBd} of Definition~\ref{Hypothesis} is not satisfied. However, it holds with high probability which follows from the following simple consequence of the reflection principle for $B$ (a two-sided Brownian motion with diffusion coefficient $2^{\frac{1}{3}}$ and $B(0)=0$)
\begin{align}\label{eq:BTail}
\mathbb{P}\big(\mathcal{M}_s\big)\leq e^{-\frac{\mu^2 }{36}s^{n}},\qquad \textrm{where} \quad \mathcal{M}_s =\Big\{\min_{y\in [-s^{-n+2}, s^{-n+2}]} B(t) \leq -\tfrac{\mu}{6} s\Big\}.
\end{align}
To complete the proof, let us define:
\begin{align}\label{eq:TildeW}
\widetilde{\mathcal{W}}_{\pm} &:= \left\{\Upsilon_T(\pm s^{-n+2})\geq  - \frac{1}{2^{2/3}s^{2(n-2)}}+\big(1+\tfrac{2\mu}{3}\big)\right\},\\\widetilde{\mathcal{W}}_{\mathrm{int}}&:= \left\{\Upsilon_T(y)\geq -\frac{y^2}{2^{2/3}}+ \big(1+\tfrac{\mu}{3}\big)s, \quad \forall y\in [-s^{-n+2}, s^{-n+2}]\right\}.
\end{align}
We claim that there exists $s^{\prime} = s^{\prime}(\mu,n, T_0)$ such that for all $s\geq s^{\prime}$ and $T\geq T_0$,
\begin{align}\label{eq:WInsertion}
\mathbb{P}\big(h^{\mathrm{Br}}_T(0)>s\big)\geq \mathbb{P}\big(\widetilde{\mathcal{W}}_{+}\cap \widetilde{\mathcal{W}}_{-}\cap \widetilde{\mathcal{W}}_{\mathrm{int}}\big) - e^{-\frac{\mu^2 }{36} s^{n}}.
\end{align}
To see this, assume $ \widetilde{\mathcal{W}}_{+}\cap \widetilde{\mathcal{W}}_{-}\cap \widetilde{\mathcal{W}}_{\mathrm{int}}\cap \mathcal{M}_s$ occurs.
Then, for $s$ large enough,
\begin{align}\label{eq:IntDom}
\int^{\infty}_{-\infty}e^{T^{1/3}\big(\Upsilon_T(y)+B(-y)\big)}dy\geq \int^{s^{-n+2}}_{-s^{-n+2}} e^{T^{1/3}\big(-\frac{1}{2^{2/3}s^{2(n-2)}}+(1+\frac{\mu}{6})s\big)} dy> e^{sT^{1/3}}.
\end{align}
By Proposition~\ref{Distribution}, the event $\{$l.h.s. of \eqref{eq:IntDom} $\geq$ r.h.s. of \eqref{eq:IntDom}$\}$ equals $\{h^{\mathrm{Br}}_T(0)>s\}$. Therefore (using \eqref{eq:BTail} for the second inequality) we arrive at the claimed \eqref{eq:WInsertion} via
\begin{align*}
 \mathbb{P}\Big(\{h^{\mathrm{Br}}_T(0)>s\}\Big) \geq \mathbb{P}\big(\widetilde{\mathcal{W}}_{+}\cap \widetilde{\mathcal{W}}_{-}\cap \widetilde{\mathcal{W}}_{\mathrm{int}}\cap \mathcal{M}_s\big)\geq \mathbb{P}\Big(\widetilde{\mathcal{W}}_{+}\cap \widetilde{\mathcal{W}}_{-}\cap \widetilde{\mathcal{W}}_{\mathrm{int}}\Big)- e^{-\frac{\mu^2 }{36}s^n}.
\end{align*}

To finish the proof of Proposition~\ref{BrUpTailLowBdProp} we use a similar argument as used to prove \eqref{eq:AcLowBd}. For any $n\in \ZZ_{\geq 3}$, there exists $s^{\prime\prime}=s^{\prime\prime}(\mu, n, T_0)$ such that for all $s\geq s^{\prime\prime}$ and $T\geq T_0$,
\begin{align*}
\mathbb{P}\big(\widetilde{\mathcal{W}}_{+}\cap \widetilde{\mathcal{W}}_{-}\cap \widetilde{W}_{\mathrm{int}}\big)\geq \Big(\mathbb{P}\big(\Upsilon_T(0)>\big(1+\tfrac{2\mu}{3}\big)s\big)\Big)^2 - e^{- Ks^n}.
\end{align*}
Combining this with \eqref{eq:WInsertion} and taking $s_0= \max\{s^{\prime},s^{\prime\prime}\} $, we arrive at \eqref{eq:BrLowTail} for all $s\geq s_0$.

 \bibliographystyle{alpha}
\bibliography{Reference}

\end{document}